\theoremstyle{plain}
\newtheorem{theorem}{Theorem}[section]
\newtheorem{corollary}[theorem]{Corollary}
\newtheorem{lemma}[theorem]{Lemma}
\newtheorem{proposition}[theorem]{Proposition}
\theoremstyle{definition}
\newtheorem{remark}[theorem]{Remark}
\renewcommand{\leq}{\leqslant}
\renewcommand{\geq}{\geqslant}\usepackage{amssymb}
\newcommand{\vr}{\varepsilon}
\newcommand{\be}{\begin{equation}}
\newcommand{\ee}{\end{equation}}
\newcommand{\ii}{\mathcal{I}}
\newcommand{\tra}{\mathbf{tr}}
\newcommand{\diag}{\mathrm{diag}}
\newcommand{\bs}{\mathbf{S}}
\newcommand{\ks}{{\mathbf{K}}_\sigma}
\newcommand{\eks}{E_{\ks}}
\newcommand{\ssim}{\overset{*}{\sim}}
\newcommand{\nn}{\mathbf{n}}
\newcommand{\bfy}{\mathbf{Y}}
\newcommand{\om}{{\mathcal{O}}_m}
\newcommand{\subsp}{\mathbf{S}}
\newcommand{\fami}{\boldsymbol{\mathfrak{F}}}
\newcommand{\comp}{\boldsymbol{\mathfrak{C}}}
\newcommand{\xd}{X^d}
\newcommand{\Q}{\mathbb{Q}}
\newcommand{\coi}[1]{\overset{#1}{\simeq}}
\def \C{{\mathbb C}}
\def \N{{\mathbb N}}
\def \R{\mathbb R}
\def \hra{\hookrightarrow}
\def \dim{{\mathrm{dim}} \, }
\def \rank{{\mathrm{rank}} \, }
\def \ker{{\mathrm{ker}} \, }
\def \ball{{\mathrm{Ba}}}
\renewcommand{\span}{\mathrm{span}}
\def \vr{\varepsilon}
\def \ran{{\mathrm{ran}} \, }
\def \rank{{\mathrm{rank}} \, }
\def \diag{{\mathrm{diag}} \, }
\def \eqalign#1{\null\,\vcenter{\openup\jot 
   \ialign{\strut\hfil$\displaystyle{##}$&$
      \displaystyle{{}##}$\hfil \crcr#1\crcr}}\,}
\newcommand{\row}{\mathbf{R}}
\newcommand{\col}{\mathbf{C}}
\newcommand{\bsp}{\row \oplus \col}
\newcommand{\is}{{\mathcal{S}}}
\begin{document}



\numberwithin{equation}{section}

\pagestyle{headings}


\title[Subspace structure]{Subspace structure of some operator and Banach spaces}

\author{T.~Oikhberg and C.~Rosendal}

\address{
Department of Mathematics, The University of California at Irvine, Irvine CA 92697, {\it and}
Department of Mathematics, University of Illinois at Urbana-Champaign, Urbana, IL 61801}
\email{toikhber@math.uci.edu}

\address{
Department of Mathematics, Statistics, and Computer Science (M/C 249)
University of Illinois at Chicago
Chicago, IL 60607-7045}
\email{rosendal@math.uic.edu}





\begin{abstract}
We construct a family of separable Hilbertian operator spaces, such that the
relation of complete isomorphism between the subspaces of each member
of this family is complete $\ks$.
We also investigate some interesting properties of completely
unconditional bases of the spaces from this family.
In the Banach space setting, we construct a space for which the relation
of isometry of subspaces is equivalent to equality of real numbers.
\end{abstract}

\maketitle



\section{Introduction and main results}\label{intro}

Recently, there has been much progress in describing the complexity
of various relations between subspaces of a given separable Banach space.
The reader is referred to \cite{FG, FLRo, FRo} for the known results on
the relations of isomorphism, biembeddability, and more.
Isometry and local equivalence (finite representability) are handled
in \cite{Me} and \cite{GJS}, respectively.

In this paper, we consider an operator space analogue of this problem
(see Section~\ref{op_sp} for a brief introduction into operator spaces).
The Effros-Borel structure on the set
$\bs(Z)$ of infinite-dimensional subspaces of a separable operator space $Z$
is defined in the same way as for Banach spaces, see e.g.~Chapter 12 of \cite{Ke},
or \cite{FRo}. Reasoning as in Section 2 of \cite{FRo}, we show that the
relations 
of complete isomorphism, complete biembeddability, and such,
defined on $\bs(Z)$, are analytic equivalence relations.
How ``simple'' can these relations get, without being trivial?
The main result of this paper provides a partial answer to this question.

\begin{theorem}\label{intro:Ksigma}
There exists a family $\fami$ of operator spaces, such that, for any
operator space $X \in \fami$, the following is true:
\begin{enumerate}
\item
$X$ is isometric to $\ell_2$.
\item
The relation of complete isomorphism and complete biembeddability
on $\bs(X)$ are
Borel bireducible to the complete $\ks$ relation.
\end{enumerate}
The family $\fami$ contains a continuum of operator spaces,
not completely isomorphic to each other.
\end{theorem}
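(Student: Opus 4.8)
The plan is to put on $\ell_2$ operator space structures assembled from Pisier's interpolation spaces $R(\theta)=[\row,\col]_\theta$, $\theta\in[0,1]$: each $R(\theta)$ is Hilbertian, so $\ell_2$-direct sums of finite powers $R(\theta)^m$ are isometric to $\ell_2$, which makes (1) automatic for every member of $\fami$. Fix a rapidly increasing sequence of integers $(m_k)$ and, at each ``level'' $k$, a sufficiently rich countable set $\{\theta_k(j):j\in\N\}\subseteq[0,1]$; put $X=\bigl(\bigoplus_{k}\bigoplus_{j}R(\theta_k(j))^{m_k}\bigr)_{\ell_2}$. Then $X$ is Hilbertian, its canonical basis is completely $1$-unconditional, and the summands are completely $1$-complemented; these last two facts are the structural input for the hard direction below. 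Different choices of the data $(m_k)$, $(\theta_k(j))$ will furnish the continuum of pairwise non-isomorphic members.

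\emph{Upper bound.} Complete isomorphism and complete biembeddability on $\bs(X)$ are $K_\sigma$ equivalence relations, hence Borel reducible to $\ks$ by universality of $\ks$ among $K_\sigma$ equivalence relations; this is the routine direction, in the spirit of Section~2 of \cite{FRo}. One realises $\bs(X)$ as a Polish parameter space of orthonormal systems and notes that, for each $n$, the set of pairs $(Y,Y')$ admitting an isomorphism $T$ with $\norm{T}_{cb}\norm{T^{-1}}_{cb}\le n$ (resp.\ completely bounded embeddings of cb-norm $\le n$ in both directions) is closed: a subsequential weak-$*$ limit of witnessing maps is again a witness, since the cb-norm is weak-$*$ lower semicontinuous. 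Taking the union over $n$ gives the reduction.

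\emph{Lower bound.} The substance is to Borel-reduce a concrete complete $K_\sigma$ equivalence relation $\ks$ to complete isomorphism on $\bs(X)$. A convenient presentation of $\ks$ is on a sequence space $P$ of the form $\bar p\,\ks\,\bar q\iff\exists C\ \forall k\ \delta_k(p_k,q_k)\le C$, a uniform mutual-domination clause; the construction is tuned to reproduce exactly such a clause. To $\bar p\in P$ assign the block subspace $Y_{\bar p}=\bigl(\bigoplus_k R(\theta_k(p_k))^{m_k}\bigr)_{\ell_2}\subseteq X$ (selecting the $p_k$-th summand at level $k$); this is Borel. Using the estimate $d_{cb}\bigl(R(\theta)^m,R(\theta')^m\bigr)\asymp m^{|\theta-\theta'|}$ and choosing the $\theta_k(\cdot)$ and $m_k$ so that $m_k^{|\theta_k(j)-\theta_k(j')|}$ realises $\delta_k(j,j')$, one inclusion is immediate: if $\bar p\,\ks\,\bar q$ the diagonal maps $R(\theta_k(p_k))^{m_k}\to R(\theta_k(q_k))^{m_k}$ assemble into a complete isomorphism $Y_{\bar p}\to Y_{\bar q}$, and the same maps witness biembeddability. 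The converse is the main obstacle: a complete isomorphism, or merely a pair of completely bounded embeddings, between $Y_{\bar p}$ and $Y_{\bar q}$ must force $\bar p\,\ks\,\bar q$. For this one shows that any such map is, after a completely bounded perturbation of controlled norm, block-diagonal with respect to the common level decomposition $\bigoplus_k$; its $k$-th coordinate is then a cb-isomorphism $R(\theta_k(p_k))^{m_k}\to R(\theta_k(q_k))^{m_k}$ of cb-norm bounded uniformly in $k$, so $\delta_k(p_k,q_k)\asymp m_k^{|\theta_k(p_k)-\theta_k(q_k)|}$ stays bounded in $k$, i.e.\ $\bar p\,\ks\,\bar q$. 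The approximate block-diagonalisation is where the special geometry enters: because the canonical basis of $X$ is completely unconditional and the levels are completely $1$-complemented, a gliding-hump and stability argument — of the kind developed for completely unconditional bases in the body of the paper — lets one discard the off-diagonal part of a cb-bounded map between block subspaces at bounded cost. Arranging in addition that the blocks $R(\theta_k(j))^{m_k}$ are ``self-absorbing'' (each completely embeds, with controlled distortion, into the same-level blocks with nearby parameter index) ensures that complete biembeddability among the $Y_{\bar p}$ is governed by the very same relation $\ks$; together with the upper bound this yields the claimed Borel bireducibility for both relations.

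\emph{A continuum of members.} To see that $\fami$ contains continuum many pairwise non-completely-isomorphic spaces, let the defining data depend on a parameter: fix a family $\{A_r:r\in\R\}$ of infinite subsets of $\N$ with pairwise infinite symmetric differences (e.g.\ an independent family), and let $X_r$ be the space $X$ above with the parameters $\theta_k(\cdot)$ shifted by $\tfrac12$ on the levels $k\in A_r$. For $r\ne r'$ there are infinitely many levels $k$ at which the interpolation data of $X_r$ and $X_{r'}$ differ by a fixed amount; since $m_k\to\infty$, the estimate $d_{cb}\asymp m_k^{|\cdot|}$ shows $X_r$ and $X_{r'}$ are not completely isomorphic, while each $X_r$ is isometric to $\ell_2$ and, by the two preceding paragraphs, has complete isomorphism (and complete biembeddability) on $\bs(X_r)$ Borel bireducible with $\ks$. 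Thus $\fami=\{X_r:r\in\R\}$ is as required. Throughout, the one genuinely hard point is the approximate block-diagonalisation underlying the lower bound; the remainder is bookkeeping with interpolation distances and the Effros--Borel structure.
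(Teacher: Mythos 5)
Your construction is genuinely different from the paper's (which realizes $\fami$ as the spaces $X(A)=\{\xi\oplus A\xi\}\hra\row\oplus\col$ for compact non-Hilbert--Schmidt contractions $A$), and it has two gaps that are fatal as written. First, the upper bound. You claim that for each $n$ the set of pairs $(Y,Y')$ admitting an isomorphism $T$ with $\|T\|_{cb}\|T^{-1}\|_{cb}\le n$ is closed because a weak-$*$ subsequential limit of witnesses is again a witness. This is false: a weak-$*$ limit of isomorphisms with uniformly bounded norms need not be injective, let alone boundedly invertible (the powers of the unilateral shift are complete isometries of $\row$ converging weak-$*$ to $0$), so lower semicontinuity of the cb-norm controls $\|T\|_{cb}$ in the limit but says nothing about $T^{-1}$. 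For a general separable space the isomorphism relation on $\bs(X)$ is only analytic, and proving it is $K_\sigma$ for a \emph{particular} $X$ is precisely the hard content of the theorem; it cannot be dismissed as routine. The paper obtains it by showing that complete isomorphism of $Y,Z\hra X(A)$ is equivalent to an explicit condition on the singular value sequences of $A|_Y$ and $A|_Z$ (via the formula $\|T\|_{cb}=\max\{\|T\|,\sup\{\|BTu\|_2:\|Au\|_2\le1,\ \|u\|\le1\}\}$ together with Wielandt's minimax theorem), and then proving that this condition on sequences is $K_\sigma$ by a K\"onig's-lemma argument on trees of finite witnesses. No analogue of this is supplied for your $X$.

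Second, the lower bound and, more importantly, the treatment of arbitrary subspaces. Your reduction from $\ks$ only produces the block subspaces $Y_{\bar p}$, which suffices for that one direction, but bireducibility forces you to classify \emph{all} infinite-dimensional subspaces of $X=(\bigoplus_{k,j}R(\theta_k(j))^{m_k})_{\ell_2}$, and the ``approximate block-diagonalisation'' you invoke for this is asserted rather than proved. The gliding-hump technique you attribute to the body of the paper is not in fact what the paper does: there is no block-diagonalisation step there, because the whole point of working inside $\row\oplus\col$ is that every subspace of $X(A)$ is literally another space $X(B)$ with $B=A|_Y$, so the subspace structure collapses onto sequences of singular values. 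For $\ell_2$-sums of interpolation spaces $R(\theta)$ with varying $\theta$ I see no reason that a cb-bounded map between arbitrary subspaces can be perturbed to a block map at uniformly bounded cost, and this is exactly where the difficulty of the theorem lives. The parts of your proposal that do work --- the continuum of parameters via an independent family, the Borelness bookkeeping, and the idea of tuning distances $d_{cb}(R(\theta)^m,R(\theta')^m)\asymp m^{|\theta-\theta'|}$ to a concrete presentation of $\ks$ --- parallel the paper's reduction from $E_{\ks}$ on $\prod_k\{0,\dots,k-1\}$, but the two load-bearing steps are missing.
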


It is possible to prove that
the relation of complete isometry on $\bs(X)$ ($X \in \fami$)
is Borel bireducible to the equality on $\R$. The proof proceeds along the
same lines as Theorem~\ref{bireducible}, but is exceedingly technical,
and not very illuminating. We therefore omit it, and present
a related Theorem~\ref{intro:banach} instead.

Each space from $\fami$ has its {\it canonical basis} (defined in
Section~\ref{sequences}), which is $1$-completely unconditional.
It turns out these bases have interesting properties of their own.

\begin{theorem}\label{intro:unique_basis}
Any subspace $Y$ of an operator space $X$ from the family $\fami$
has $1$-completely unconditional canonical basis.
Any $C$-completely unconditional basis in such a $Y$
is $\phi(C)$-equivalent (up to a permutation) to the
canonical basis of $Y$, with $\phi(C)$ polynomial in $C$.
\end{theorem}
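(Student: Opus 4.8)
The argument splits along the two assertions, and both rest on the structural description of the spaces in $\fami$ supplied by Section~\ref{sequences}. I will use the following features of that description. Every unit vector of an operator space in $\fami$, or of one of its subspaces, carries a well-defined \emph{type}, a point of the parameter set used to build the family. Every subspace $Y$ admits a canonical, completely isometric decomposition $Y=\bigoplus_t Y_t$ into \emph{pure-type} pieces, where each $Y_t$ is a homogeneous Hilbertian operator space of the single type $t$; the canonical basis of $Y$ is, by definition, any orthonormal basis of $Y$ adapted to this decomposition. Finally, pieces of distinct types are quantitatively incomparable: the completely bounded Banach--Mazur distance between the $n$-dimensional pure-type pieces of types $t\neq t'$ grows at least like a fixed positive power of $n$, with exponent bounded below in terms of the separation of $t$ and $t'$.

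\emph{First assertion.} Once the pure-type decomposition is available, existence of the canonical basis is automatic, and $1$-complete unconditionality I would verify blockwise. Each $Y_t$, being a homogeneous Hilbertian operator space of pure type, has a $1$-completely unconditional orthonormal basis, since this holds for the weighted row and column (and interpolated row/column) objects out of which the pure-type pieces are assembled; the pieces combine in the minimal operator-space direct sum, and for that sum a diagonal map has completely bounded norm equal to the supremum of its completely bounded norms on the summands. Concatenating the blockwise bases thus produces a $1$-completely unconditional basis of $Y$, which is its canonical basis.

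\emph{Second assertion.} Let $(h_k)$ be a $C$-completely unconditional basis of $Y$, normalized. I would reach the canonical basis in four constant-tracking steps. (1) \emph{Banach layer.} A $C$-completely unconditional basis is $C$-unconditional, so averaging over signs inside the Hilbert space $Y$ shows that the Gram operator $\Gamma$ of $(h_k)$ satisfies $C^{-2}I\leq\Gamma\leq C^{2}I$; in particular $(h_k)$ is, up to a factor polynomial in $C$, equivalent to an orthonormal basis. (2) \emph{Orthogonalization.} Replace $(h_k)$ by its symmetric (L\"owdin) orthogonalization, obtained by applying $\Gamma^{-1/2}$; one has to show the resulting orthonormal basis is still completely unconditional with constant polynomial in $C$. (3) \emph{Purity.} Confronting the uniformly completely bounded diagonal maps of a completely unconditional orthonormal basis of $Y$ with the structural rigidity and the incomparability of distinct strata, show that each of its vectors is, up to an error polynomial in $C$, a pure-type vector: a vector straddling two incomparable types with coefficients bounded away from $0$, in company of $n-1$ further such vectors, would realize inside $Y$ a subspace completely isomorphic with a bounded constant to a ``wrong'' pure-type piece, contradicting the polynomial-in-$n$ incomparability once $n$ is large. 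Then perturb to make the vectors exactly pure. (4) \emph{Matching.} A pure-type orthonormal basis of $Y$ is adapted to some decomposition of $Y$ into homogeneous pure-type pieces; by canonicity this is the decomposition $\bigoplus_t Y_t$, so on each $Y_t$ the purified basis and the canonical basis are two orthonormal bases of the same homogeneous space, hence completely isometrically equivalent, and assembling these identifications blockwise together with the type-matching permutation $\pi$ shows the purified basis is completely isometrically equivalent, up to $\pi$, to the canonical basis. Chaining (1)--(4) gives that $(h_k)$ is $\phi(C)$-equivalent up to a permutation to the canonical basis of $Y$, with $\phi(C)$ the product of the polynomial losses incurred in (1)--(3).

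The main obstacle is step (2): carrying \emph{complete} unconditionality through orthogonalization. Ordinary unconditionality passes to the orthonormalized basis by a soft averaging argument, but to remain in the operator-space category one needs the functional calculus of the Gram operator of a completely unconditional system---in particular the map $\Gamma^{-1/2}$---to be completely bounded with norm controlled by $C$, which fails for a generic operator on a Hilbertian operator space and must be extracted from the special (weighted row/column) structure of $X$, presumably via a Schur-multiplier or interpolation estimate. The quantitative form of the purity step~(3)---tracking how the incomparability exponent converts a straddling error into a polynomial bound in $C$---is the other substantive point, though it is close in spirit to the incomparability estimates already established in the construction of the family.
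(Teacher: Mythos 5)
Your proposal rests on a structural picture that the spaces in $\fami$ do not have, and its one admitted obstacle is precisely the step the paper is designed to avoid. A space $X(A)\in\fami$ (and, by restriction, any subspace $Y$, which is completely isometric to $X(A|_Y)$) is spanned by vectors $e_i=\xi_i\oplus A\xi_i$ whose matricial norm is $\max\{\|\sum a_ia_i^*\|,\ \|\sum\|A\xi_i\|^2a_i^*a_i\|\}$; the ``type'' of a basis vector is the scalar weight $\|A\xi_i\|\in[0,1]$, and these weights form a sequence tending to $0$ through a continuum of values. There is no canonical decomposition into finitely or discretely many homogeneous pure-type pieces, and no quantitative incomparability between ``distinct types'' of the kind you posit (nearby weights give nearby spaces). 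Consequently your steps (3) and (4) have nothing to stand on, and the first assertion cannot be verified ``blockwise'' -- in the paper it follows in one line from the displayed norm formula \eqref{basis_general} applied to the compact contraction $A|_Y$.

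The substantive gap is your step (2). You correctly identify that pushing \emph{complete} unconditionality through the L\"owdin orthogonalization $\Gamma^{-1/2}$ is not a soft averaging argument and that you have not proved it; that is exactly why the paper never orthogonalizes the given basis. Instead, Proposition~\ref{unconditional} works with $(g_i)$ as given: it sets $\beta_i=\|Ag_i\|$ and shows the map $T:g_i\mapsto e_i(\beta)$ into $\xd(\beta)$ satisfies $\|T\|_{cb}\leq C$, $\|T^{-1}\|_{cb}\leq C^2$. The mechanism is Lemma~\ref{compute_norm}, which reduces cb-norms of maps between these spaces to Hilbert--Schmidt estimates of the form $\|BTu\|_2\leq C\max\{\|Au\|_2,\|u\|\}$; rewriting these as trace inequalities and averaging the sign operators $\Lambda$ gives ${\mathrm{Ave}}_\Lambda\,\Lambda^*A^*A\Lambda=T^*B^2T$, from which both directions follow by positivity of the trace. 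The matching of $\beta$ to $\alpha$ (your steps (3)--(4)) is then Theorem~\ref{same_basis}: Wielandt's minimax theorem compares partial sums $\sum_j\beta_{i_j}'^2$ and $\sum_j\alpha_{i_j}^2$ and yields $\beta_i\leq 2C^2\alpha_i$ off an exceptional set with $\sum\beta_i^2<2C^4$, which by Corollary~\ref{ci_to_row} and \eqref{basis} controls the formal identity. Unless you can actually prove a cb bound on $\Gamma^{-1/2}$ (and replace the type stratification by a correct structural statement), the proposal does not yield the theorem; I would redirect you to the averaging-at-the-level-of-$A^*A$ idea, which sidesteps orthogonalization entirely.
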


For certain spaces $X$, a stronger result holds.

\begin{theorem}\label{intro:subbasis}
For any $a > 1$ there exists an operator space $X$, 
belonging to the family $\fami$,
such that the canonical basis in any subspace of $X$ is
$a$-equivalent to a subsequence of the canonical basis of $X$.
Consequently, every $C$-completely unconditional basic sequence in
$X$ is $\phi(C)$-equivalent (up to a permutation) to a subsequence of the
canonical basis, with $\phi(C)$ polynomial in $C$.
\end{theorem}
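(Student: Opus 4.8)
The plan is to exploit the latitude the statement allows in the choice of $X$ \emph{within} $\fami$: we take its canonical basis to carry ``parameters'' that are lacunary, with the size of the lacunarity gap dictated by $a$, and show that this forces the complete isomorphism type of every normalized block vector to agree, up to $a$, with that of a single basis vector of its support. First I would recall from Section~\ref{sequences} the explicit description of a member $X\in\fami$: it is $\ell_2$ equipped with an operator space structure determined by a sequence of parameters $(\xi_n)_n$; its canonical basis $(e_n)_n$ is $1$-completely unconditional; and, via the combination rule built into the construction, to each normalized block $y=\sum_n c_n e_n$ is associated a single derived parameter $\eta(y)$, depending on $(c_n)$ and on $(\xi_n)_{n\in\,\mathrm{supp}\,y}$, which governs the complete isomorphism type of the one-dimensional space $[\,y\,]$ together with its position relative to the other basis vectors. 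Moreover, and this is the stability fact underlying Theorem~\ref{intro:unique_basis}, for a disjointly supported normalized block basis $(y_k)_k$ the complete isomorphism type of $\overline{\span}\{y_k\}$ with its canonical basis is determined, up to a universal constant arbitrarily close to $1$, by the scalar sequence $(\eta(y_k))_k$ alone.

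The quantitative core is then to choose $(\xi_n)_n$ so sparse that the combination rule of the construction, which is of a one-sided extremal type, returns up to a controlled error one of its own inputs. Concretely, I would seek a function $\delta\mapsto\rho(\delta)$, with $\rho(\delta)\to\infty$ as $\delta\to0$, such that whenever consecutive parameters are separated by a factor at least $\rho(\delta)$, every normalized block $y=\sum_n c_n e_n$ has $[\,y\,]$ completely $(1+\delta)$-equivalent to $[\,e_{n(y)}\,]$ for a single index $n(y)\in\mathrm{supp}\,y$. With the explicit form of $\eta(\cdot)$ from Section~\ref{sequences} in hand, the argument would split $(c_n)$ into a bounded number of ``large'' coordinates and a negligible tail, observe that in the lacunary regime $\eta(y)$ is pinned by the extremal large coordinate, and absorb the error with a standard $\vr/2^k$ estimate. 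I expect this to be the main obstacle: extremality must hold uniformly over \emph{all} coefficient vectors — in particular over vectors with many small coordinates, and over the higher matrix amplifications that a complete isomorphism involves — and the cumulative loss below has to stay under $a$, which forces $\delta$ small and the lacunarity gap $\rho(\delta)$ correspondingly large.

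With such an $X$ fixed, the first assertion follows. Given a subspace $Y\subseteq X$, a small perturbation (costing a factor below $1+\delta$, which we absorb into $a$) turns its canonical basis into a disjointly supported normalized block basis $(y_k)_k$, which we take ordered from left to right. By the extremality step each $y_k$ is $(1+\delta)$-completely equivalent to $e_{n_k}$ for a single $n_k\in\mathrm{supp}\,y_k$, and since the blocks are in left-to-right order the indices satisfy $n_1<n_2<\cdots$; by the coordinatewise stability fact the whole sequence $(y_k)_k$ is then completely equivalent, with constant still below $a$, to $(e_{n_k})_k$, i.e.\ to a subsequence of the canonical basis of $X$. As $(y_k)_k$ is a $(1+\delta)$-perturbation of the canonical basis of $Y$, this proves that the canonical basis of $Y$ is $a$-equivalent to a subsequence of the canonical basis of $X$.

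Finally, for the ``consequently'' clause, let $(z_k)_k$ be a $C$-completely unconditional basic sequence in $X$ and set $Y=\overline{\span}\{z_k\}$. By Theorem~\ref{intro:unique_basis} there is a polynomial $\phi_1$ such that $(z_k)_k$ is $\phi_1(C)$-equivalent, up to a permutation, to the canonical basis of $Y$. Composing this with the first assertion (equivalence constants multiply, and the permutation composes with the identity) shows that $(z_k)_k$ is $\phi(C)$-equivalent, up to a permutation, to a subsequence of the canonical basis of $X$, where $\phi(C):=a\,\phi_1(C)$ is polynomial in $C$.
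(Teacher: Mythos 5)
Your proposal goes wrong at two decisive points, and the second cannot be repaired by any choice of parameters. First, the reduction of an arbitrary subspace $Y\hra X=\xd(\alpha)$ to a disjointly supported normalized block basis is not available: the canonical basis of $Y$ is forced upon you --- it consists of the eigenvectors $\eta_i$ of $(A|_Y)^*A|_Y$ (with $A=\diag(\alpha)$) together with a basis of $\ker(A|_Y)$ --- and for a general closed subspace of $\ell_2$ these vectors need not be small perturbations of finitely and disjointly supported blocks; nor may you pass to a subsequence, since the whole basis must be handled. The paper needs no such reduction: because $\langle A\eta_i,A\eta_j\rangle=\beta_i^2\delta_{ij}$, formula \eqref{basis_general} shows that $Y$ with its canonical basis is completely isometric to $\xd(\beta)$, where $\beta={\mathbf D}(A|_Y)$ is the sequence of singular values of the restriction; the supports are irrelevant.

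Second, and more fundamentally, the ``extremality'' claim that a sufficiently lacunary choice of parameters pins every normalized vector $y=\sum_nc_ne_n$ to a single basis vector of its support is false, and enlarging the gaps makes it worse, not better. The parameter governing $y$ is $\|Ay\|=\bigl(\sum_n|c_n|^2\alpha_n^2\bigr)^{1/2}$, a weighted $\ell_2$-average: taking $y=\vr e_1+\sqrt{1-\vr^2}\,e_m$ with $\alpha_m=0$ gives $\|Ay\|=\vr\alpha_1$, which sweeps through all of $(0,\alpha_1)$ and is within a factor $1+\delta$ of neither $\alpha_1$ nor $\alpha_m$. The paper's Theorem~\ref{subbasis} does the opposite of what you propose: the nonzero values of $\alpha$ form a geometric sequence $a^{-k}$ with ratio \emph{exactly} $a$ --- so that every conceivable singular value $\beta_i$ of $A|_Y$ lies within a factor $a$ of some level --- and each level is given large multiplicity ($N_{k+1}-N_k\geq N_k$), so that the minimax interlacing $\beta_i\leq\alpha_{2i}$ guarantees the level-$k$ singular values of $A|_Y$ can be \emph{injectively} matched to level-$k$ indices of the canonical basis of $X$. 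The target index of this matching is in general not in the support of the corresponding eigenvector; it is produced by counting, not by extremality. Your handling of the ``consequently'' clause (compose with Theorem~\ref{intro:unique_basis}, constants multiply) coincides with the paper's and is fine once the first assertion is in place.
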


As part of our motivation lies in the field of Banach spaces, we mention
a few classical results related to the three theorems above.

No ``commutative'' counterpart of Theorem~\ref{intro:Ksigma}
has yet been obtained:
for a Banach space $E$, very little is known about the ``upper''
estimates on the complexity of the isomorphism relation on $\bs(E)$.
It is possible that the relation of isomorphism on $\bs(E)$
is complete analytic whenever $E$ is a separable Banach space,
not isomorphic to $\ell_2$.



Theorem~\ref{intro:unique_basis} shows (among other things)
that every subspace of $X$ has an unconditional basis.
One can raise a related question:
suppose every infinite dimensional subspace of a separable
Banach space $E$ has an unconditional basis.
Must $E$ be isomorphic to $\ell_2$? The answer to this question
appears to be unknown. We mention a few partial results. First,
any subspace of $E$ has the Approximation Property,
therefore, by \cite[Theorem 1.g.6]{LT2}, $E$ has to have type $2 - \vr$ and
cotype $2 + \vr$ for any $\vr > 0$. Furthermore, by \cite[Theorem 10.13]{PiVOL}, 
$E$ has weak cotype $2$. By \cite{KT95}, $E$ is $\ell_2$-saturated.
Finally, if $E = \ell_2(F)$ for some Banach space $F$, then
$E$ is isomorphic to a Hilbert space \cite{KT02}.

Searching for a ``commutative'' analogues of Theorem~\ref{intro:subbasis},
we pose the following question: suppose a Banach space $E$ has
an unconditional basis $(e_i)$, such that every subspace of $E$ has
an unconditional basis, equivalent to a subsequence of $(e_i)$. Must
$E$ be isomorphic to $\ell_2$?



Finally, in the Banach space setting, we construct a space $E$
whose subspace structure is ``very simple.''

\begin{theorem}\label{intro:banach}
The following equivalence relations between infinite dimensional subspaces of
$\R \oplus_1 \ell_2$ (or, in the complex case, $\C \oplus_1 \ell_2$)
are Borel bireducible to $(\R,=)$: (i) isometry,
(ii) having Banach-Mazur distance $1$;
(iii) isometric bi-embeddability; (iv) almost isometric bi-embeddability.
\end{theorem}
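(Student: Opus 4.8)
The plan is to classify the infinite-dimensional subspaces of $E:=\R\oplus_1\ell_2$ up to isometry by a single real parameter, to exhibit a Borel invariant computing that parameter which is moreover invariant under the a priori weaker relations (ii)--(iv), and to produce a trivial reverse reduction. Write $\pi\colon E\to\R$ for the first-coordinate functional, so that $\ker\pi=\{0\}\oplus\ell_2=:H$. Given $Y\in\bs(E)$, either $Y\subseteq H$, in which case $Y$ is isometric to $\ell_2$; or $Y\cap H$ has codimension one in $Y$, hence is an infinite-dimensional subspace of $\ell_2$, and $Y=\R v\oplus(Y\cap H)$ with $v=(1,w)$. Since subtracting an element of $Y\cap H$ from $v$ does not change $Y$, I may assume $w\perp(Y\cap H)$; a direct computation then gives $\|(s,sw+k)\|=|s|+\sqrt{r^2s^2+\|k\|^2}$ for $k\in Y\cap H$, where $r:=\dist_{\ell_2}(w,Y\cap H)$. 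Thus $Y$ is isometric to $Z_r:=\bigl(\R\oplus\ell_2,\ \|(s,h)\|=|s|+\sqrt{r^2s^2+\|h\|_2^2}\bigr)$, with $Z_0=E$ itself. The same computation shows that a two-dimensional subspace of $Z_r$ lying inside the Hilbertian hyperplane $H$ is Euclidean, while the remaining ones are, up to isometry, the spaces $V_R:=(\R^2,\ \|(a,b)\|=|a|+\sqrt{R^2a^2+b^2})$ with $R\ge r$ (and $V_0=\ell_1^2$). Finally, $r(Y):=r$ (with $r(Y):=-1$ when $Y\subseteq H$) is a Borel function of $Y$: for any $v=(s,h)\in Y$ with $s\neq 0$ one has $\dist_E(v,Y\cap H)=|s|(1+r)$, and $\dist_E(v,Y\cap H)$ is read off a dense sequence of $Y\cap H$ in the usual Effros--Borel fashion.

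The next step is to package everything into the invariant $\mu(X):=\sup\{d_{\mathrm{BM}}(V,\ell_2^2):V\subseteq X,\ \dim V=2\}$. If $X$ embeds into $X'$ with distortion $\le 1+\vr$, then restricting the embedding to a two-dimensional $V\subseteq X$ and using $d_{\mathrm{BM}}(V,\ell_2^2)\le d_{\mathrm{BM}}(V,TV)\,d_{\mathrm{BM}}(TV,\ell_2^2)$ yields $\mu(X)\le(1+\vr)\mu(X')$; hence $\mu$ is invariant under almost isometric bi-embeddability, and therefore under each of (i)--(iv). I then compute $\mu$ on the list above: $\mu(\ell_2)=1$, and $\mu(Z_r)=\sup_{R\ge r}e(R)$ where $e(R):=d_{\mathrm{BM}}(V_R,\ell_2^2)$. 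The crux is to show that $R\mapsto e(R)$ is continuous and strictly decreasing on $[0,\infty)$, with $e(0)=d_{\mathrm{BM}}(\ell_1^2,\ell_2^2)=\sqrt2$, with $e(R)\to 1$ as $R\to\infty$ (after rescaling the first coordinate by $R$, $V_R$ converges to $\ell_2^2$), and with $e(R)>1$ for every finite $R$ (immediate, since the unit ball of $V_R$ is centrally symmetric and symmetric in each coordinate axis, yet its first-quadrant boundary, given by $b^2=1-2a+(1-R^2)a^2$, carries a linear term in $a$ and so is not an arc of an origin-centred axis-aligned ellipse). Granting this, $\mu(Z_r)=e(r)\in(1,\sqrt2\,]$ is strictly decreasing in $r$ and never equal to $1=\mu(\ell_2)$. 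Using the classical fact that a Banach space all of whose two-dimensional subspaces are Euclidean is itself Hilbertian (the parallelogram law is a two-vector condition), $\mu(Y)=1$ exactly when $Y$ is isometric to $\ell_2$, and otherwise $\mu(Y)=e(r(Y))$; since $e$ is injective, $\mu(Y)=\mu(Y')$ is equivalent to $Y$ and $Y'$ being isometric. Because isometry implies each of (ii)--(iv), and each of (i)--(iv) is contained in almost isometric bi-embeddability, which preserves $\mu$, all four relations on $\bs(E)$ coincide with $\{(Y,Y'):\mu(Y)=\mu(Y')\}$.

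It remains to check that $\mu$ itself is Borel and to give the reverse reduction. Borelness: given a dense sequence $(y_n)$ in $Y$, $\mu(Y)=\sup_{i,j}d_{\mathrm{BM}}(\operatorname{span}(y_i,y_j),\ell_2^2)$ (the supremum over finitely generated subspaces suffices by continuity of $d_{\mathrm{BM}}(\cdot,\ell_2^2)$ in the spanning vectors), a countable supremum of Borel functions of $Y$, hence Borel. Thus $\mu\colon\bs(E)\to[1,\sqrt2\,]\subseteq\R$ is a Borel reduction of each of (i)--(iv) to $(\R,=)$. For the other direction, fix the standard basis $(e_i)$ of $\ell_2$ and send $x\in(0,\infty)$ (identified Borel-isomorphically with $\R$) to the subspace $\overline{\operatorname{span}}\{(1,xe_1),(0,e_2),(0,e_3),\dots\}\subseteq E$, which is isometric to $Z_x$, depends Borel-measurably (indeed continuously) on $x$, and satisfies $Z_x\cong Z_{x'}\iff x=x'$ by the classification and the injectivity of $e$; so $x\mapsto Z_x$ is a Borel reduction of $(\R,=)$ to each of (i)--(iv). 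Combining the two reductions gives the asserted Borel bireducibility; the complex case $\C\oplus_1\ell_2$ is handled identically.

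The only non-formal ingredient is the monotonicity of $R\mapsto e(R)=d_{\mathrm{BM}}(V_R,\ell_2^2)$: this is an explicit two-dimensional optimization (over diagonal linear images of $V_R$, using the coordinate symmetries of its unit ball), and it is precisely what forces the coarse relation of almost isometric bi-embeddability down to equality of the parameter. Everything else --- the subspace classification, the Borelness of $\mu$, the implication ``isometry $\Rightarrow$ (ii)--(iv)'', and the reverse reduction --- is routine, and I expect the monotonicity estimate (together with the separation $\mu(\ell_2)=1<\mu(Z_r)$) to be where the real work lies.
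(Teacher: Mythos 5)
Your strategy is genuinely different from the paper's, but it has a real gap at its load-bearing step: you never prove that $R\mapsto e(R)=d_{\mathrm{BM}}(V_R,\ell_2^2)$ is continuous and strictly decreasing, and you yourself flag this as ``where the real work lies.'' This is not a deferrable verification: everything downstream depends on it. Without monotonicity you cannot evaluate $\mu(Z_r)=\sup_{R\ge r}e(R)$ as $e(r)$, and without injectivity of $e$ you cannot conclude that $\mu$ separates $Z_r$ from $Z_{r'}$ for $r\neq r'$ --- which is needed even for the plain isometry statement (i), since to see that the spaces $Z_r$ are pairwise non-isometric one needs an \emph{intrinsic} invariant ($c(Y)=\|P|_Y\|$ and your $r(Y)$ are a priori defined in terms of the ambient embedding into $\R\oplus_1\ell_2$). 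Establishing strict monotonicity of the Banach--Mazur distance from the bodies $\{|a|+\sqrt{R^2a^2+b^2}\le 1\}$ to the disc is a nontrivial extremal problem: there is no inclusion or rescaling relation between these balls that makes it formal, and even the reduction of the optimization to diagonal maps (via the coordinate symmetries) needs an argument. As written, the proof is incomplete exactly at the point where all four relations are forced down to equality of a single real parameter.

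For contrast, the paper avoids two-dimensional extremal geometry altogether. It keeps the invariant $c(Y)=\|P|_Y\|$ but shows (Lemma~\ref{weak}) that $\phi(c(Y))$, with $\phi(t)=t+\sqrt{(1-t)^2+1}$ explicitly strictly increasing, equals the intrinsic quantity $\sup\{\liminf_i\|x+y_i\|\}$ over normalized $x\in Y$ and normalized weakly null $(y_i)\subset Y$. This description transforms with an explicit error under a $\lambda$-embedding, $\phi(c(W))\ge\lambda^{-1}\phi(c(Y))-2(\lambda-1)$ (Lemma~\ref{almost_embed}), so almost isometric embeddability immediately yields $c(Y)\le c(Z)$, and $c(Y)=c(Z)$ gives isometry by the same structural decomposition you found in your first paragraph. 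If you wish to keep your local invariant $\mu$, you must actually carry out the optimization proving that $e$ is strictly decreasing; otherwise, replacing $\mu$ by an asymptotic (weak-null) invariant of the above type closes the gap. The remainder of your argument --- the classification $Y\cong Z_r$, the Borelness claims, and the reverse reduction --- is essentially fine.
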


Recall that the Banach-Mazur distance between $Y$ and $Z$ is defined as
$d(Y,Z) = \inf \{ \|u\| \|u^{-1}\| : u \in B(Y,Z) \}$.
$Y$ is called {\it almost isometrically embeddable} into $Z$
if for every $\vr > 0$ there exists $Y_\vr \hra Z$ s.t. $d(Y,Y_\vr) < 1+\vr$
($d(\cdot , \cdot)$ denotes the Banach-Mazur distance between spaces).

The rest of the paper is organized as follows: Section~\ref{op_sp}
provides a brief introduction into operator spaces and c.b.~maps.
In Section~\ref{construct} we construct, for each contraction
$A \in B(\ell_2)$, a subspace $X(A)$ of $\row \oplus \col$,
and study the properties of the spaces $X(A)$. Further investigation is
carried out in Section~\ref{sequences}, where we concentrate on
the case when $A$ is compact. Unconditional bases
in the spaces $X(A)$ are described in Section~\ref{bases}.
In Section~\ref{classification} we show that, for the ``right''
compact contractions $A$, the relations of complete isomorphism and
complete biembeddability on $\bs(X(A))$ are Borel bireducible
to the complete $\ks$ relation.
Having gathered all the preliminary results, we prove Theorems
\ref{intro:Ksigma}, \ref{intro:unique_basis}, and \ref{intro:subbasis} in
Section~\ref{main_proofs}.
Finally, in Section~\ref{banach} we
prove Theorem~\ref{intro:banach}.

\section{Introduction into operator spaces}\label{op_sp}

As our paper deals primarily with operator spaces, we are compelled to
recall some basic definitions and facts about the topic. An interested
reader is referred to \cite{ER, Pa, PiINTRO} for more information.
A ({\it concrete}) {\it operator space} $X$ is, for us, just a closed
subspace of $B(H)$ ($H$ is a Hilbert space). If $X$ and $Y$ are operator spaces, embedded into $B(H)$ and $B(K)$ respectively, we defined the {\it minimal tensor
product} of $X$ and $Y$ (denoted simply by $X \otimes Y$) as the
closure of the algebraic tensor product $X \odot Y$ in $B(H \otimes_2 K)$.
It is common to denote $M_n \otimes X$ by $M_n(X)$. Here, $M_n = B(\ell_2^n)$
is the space of $n \times n$ matrices. We view $M_n(X)$ as the space
of $X$-valued $n \times n$ matrices, with the norm $\| \cdot \|_n$.
It is easy to see that the sequence of {\it matricial norms} $\| \cdot \|_n$
satisfies two properties ({\it Ruan's axioms}): (i) for any $v \in M_n(X)$,
$\alpha \in M_{n,k}$, and $\beta \in M_{k,n}$,
$\|(\beta \otimes I_X) v (\alpha \otimes I_X)\|_k \leq
\|\beta\| \|x\|_n \|\alpha\|$, and (ii) for any $v \in M_n(X)$ and
$w \in M_k(X)\|$, $\|v \oplus w\|_{k+n} = \max\{\|v\|_n, \|w\|_k\}$.
It turns out that the converse is also true. Suppose $X$ is a
{\it matricially normed space} -- that is, it is a Banach space, for which
the spaces $M_n(X)$ of $n \times n$ $X$-valued matrices are equipped
with the norms $\| \cdot \|_n$, satisfying (i) and (ii) above. Then
the norms $\| \cdot \|_n$ arise from an isometric embedding of $X$ into
$B(H)$, for some Hilbert space $H$. The spaces $X$ as above are sometimes called
{\it abstract operator spaces}.

It is easy to see that a subspace of an operator space is, again, an
operator space (we use the notation $\hra$ to denote one operator space
being a subspace of another). Moreover, a quotient and the dual of
an operator space can be again equipped with an operator space structure.
Once again, the reader is referred to \cite{ER, Pa, PiINTRO} for details.

A map $u$ from an operator space $X$ to an operator space $Y$ is called
{\it completely bounded} ({\it c.b.} for short) if its {\it c.b.~norm}
$$
\|u\|_{cb} = \sup_n \|I_{M_n} \otimes u\|_{B(M_n(X), M_n(Y))} =
\|I_{B(\ell_2)} \otimes u\|_{B(B(\ell_2) \otimes X, B(\ell_2) \otimes Y)}
$$
is finite. The set of all c.b.~maps from $X$ to $Y$ is denoted by $CB(X,Y)$. Clearly, $\|u\|_{cb} \geq \|u\|$, and $CB(X,Y) \subset B(X,Y)$
(the inclusion may be strict). The operator spaces $X$ and $Y$ are called
{\it completely isomorphic} ({\it completely isometric})
if there exists $u \in CB(X,Y)$ such that $u^{-1}$ is c.b.
(resp. $\|u\|_{cb} = \|u^{-1}\|_{cb} = 1$).
We shall use the notation $\simeq$ 
for complete isomorphism. 
Moreover, we say that $X$ is {\it $c$-completely isomorphic to $Y$}
($X \coi{c} Y$) if there exists $u \in CB(X,Y)$ with
$\|u\|_{cb} \|u^{-1}\|_{cb} \leq c$.
Complete embeddability and biembeddability are defined in the obvious way.

Suppose the operator space s $X$ and $Y$ are embedded into $B(H)$ and $B(K)$,
respectively. We define the {\it direct sum} of operator spaces $X$ and $Y$ (denoted by $X \oplus_\infty Y$, or simply $X \oplus Y$) by viewing $X \oplus Y$
as embedded into $B(H \oplus_2 K)$. Note that any $u \in M_n(X \oplus Y)$
has a unique expansion as $v \oplus w$, with $v \in M_n(X)$ and $w \in M_n(Y)$.
Then $\|u\| = \max\{\|v\|, \|w\|\}$.

Throughout this paper, we work with the row and column spaces. Recall that
a Hilbert space $H$ can be equipped with {\it row} and {\it column}
operator space structure, denoted by $H_\row$ and $H_\col$, respectively.
The space $H_\row$ is defined as the linear space of operators
$\xi \otimes \xi_0$, where $\xi_0$ is a fixed unit vector, and $\xi$
runs over $H$. Here, for $\xi \in H$ and $\eta \in K$, $\xi \otimes \eta$
denotes the operator in $B(H,K)$, defined by
$(\xi \otimes \eta)\zeta = \langle \zeta, \xi \rangle \eta$.
Similarly, the space $H_\col$ is defined as the space of operators
$\xi_0 \otimes \xi$ ($\xi \in H$).

It is easy to see that, if $K$ is a subspace of $H$, then
$K_\col$ ($K_\row$) is a subspace of $H_\col$ (resp.~$H_\row$).
For simplicity of notation, we write denoted by $\row$ and $\col$,
instead of $(\ell_2)_\row$ and $(\ell_2)_\col$, respectively.
One can use matrix units to describe these spaces. We denote by
$E_{ij} \in B(\ell_2)$ the infinite matrix with $1$ on the intersection
of the $i$-th row and the $j$-th column, and zeroes elsewhere.
Then $\row$ ($\col$) is the closed linear span of $(E_{1j})_{j=1}^\infty$ (respectively, $(E_{i1})_{i=1}^\infty)$).
Below we list a few useful properties of row and column spaces. Here, $H$
and $K$ are Hilbert spaces.
\begin{enumerate}
\item
$H_\row$ and $H_\col$ are isometric to $H$ (as Banach spaces).
\item
For any $u \in B(H,K)$,
$\|u\| = \|u\|_{CB(H_\row,K_\row)} = \|u\|_{CB(H_\col,K_\col)}$.
\item
If $(\xi_i)$ is an orthonormal system in $H$, then,
for any finite sequence $(a_i)$ of elements of $M_n$,
$$
\begin{array}{lllll}
\|\sum_i a_i \otimes \xi_i\|_{M_n(H_\row)} & = &
\|(\sum_i a_i a_i^*)^{1/2}\| & = & \|\sum_i a_i a_i^*\|^{1/2} ,
\cr
\|\sum_i a_j \otimes \xi_i\|_{M_n(H_\col)} & = &
\|(\sum_i a_i^* a_i)^{1/2}\| & = & \|\sum_i a_i^* a_i\|^{1/2} .
\end{array}
$$
\item
Suppose $H^{(1)}, \ldots, H^{(n)}$ are Hilbert spaces. Then the
formal identity map $id : (H^{(1)} \oplus_2 \ldots \oplus_2 H^{(n)})_\row
\to H^{(1)}_\row \oplus \ldots \oplus H^{(n)}_\row$ is a complete
contraction, and $\|id^{-1}\|_{cb} \leq \sqrt{n}$.
The same is also true for column spaces.
\item
For any $u \in CB(H_\row, K_\col)$ or $u \in CB(H_\col, K_\row)$,
$\|u\|_2 = \|u\|_{cb}$ (here, $\| \cdot \|_2$ is the Hilbert-Schmidt norm).
\item
Duality: $H_\row^* = H_\col$, and $H_\col^* = H_\row$.
\item
For any operator space $X$
and any $u \in B(X, H_\col)$ ($u \in B(X, H_\row)$),
$\|u\|_{cb} = \|I_\col \otimes u\|_{B(\col \otimes X, \col \otimes H_\col)}$
(resp.
$\|u\|_{cb} = \|I_\row \otimes u\|_{B(\row \otimes X, \row \otimes H_\row)}$).
This result follows from the proof of Smith's lemma --
see e.g. Proposition 2.2.2 of \cite{ER}.
\item
If $H$ is separable infinite dimensional, then $H_\row$
($H_\col$) is completely isometric to $\row$ (resp.~$\col$).
\end{enumerate}

A sequence $(x_i)_{i \in I}$ in an operator space $X$ is called
{\it normalized} if $\|x_i\| = 1$ for every $i \in I$. $(x_i)$ is
said to be a {\it $c$-completely unconditional basic sequence} ($c \geq 1$)
if, for any finite sequence of matrices $(a_i)$, and any sequence of scalars
$\lambda_i \in \{ \lambda \in \C : |\lambda| \leq 1\}$,
we have $\|\sum_i \lambda_i a_i \otimes x_i\|_{M_n(X)} \leq
c \|\sum_i a_i \otimes x_i\|_{M_n(X)}$. It is easy to see that any
such sequence $(x_i)$ is linearly independent.
A $c$-completely unconditional basic sequence $(x_i) \subset X$
is called a {\it $c$-completely unconditional basis} in $X$
if $X = \span[x_i : i \in I]$ (here and below, $\span[{\mathcal{F}}]$
refers to the closed linear span of the family ${\mathcal{F}}$).

A convenient example of a normalized $1$-completely unconditional basis is
provided by an orthonormal basis in $\row$ or $\col$. For future reference,
we observe that any completely unconditional basis is ``similar to''
an orthogonal one. More precisely, suppose $X$ is an operator space,
which is isometric to a Hilbert space (this is the setting we are
concerned with in this paper). Suppose, furthermore, that $(x_i)$ is a
normalized $c$-completely unconditional basis in
an operator space $X$, isometric to a Hilbert space,
then $(x_i)$ is ``similar to'' an orthonormal basis:
for any finite sequence of scalars $(\alpha_i)$,
$$
c^2 \|\sum_i \alpha_i x_i\|^2 \geq
{\mathrm{Ave}}_\pm \|\sum_i \pm \alpha_i x_i\|^2 =
\sum_i |\alpha_i|^2 ,
$$
and similarly,
$c^{-2} \|\sum_i \alpha_i x_i\|^2 \leq \sum_i |\alpha_i|^2$.
Thus, there exists an $U : X \to \ell_2(I)$, s.t. $(U x_i)_{i \in I}$
is an orthonormal basis, and $\|U\|, \|U^{-1}\| \leq c$.

Families $(x_i)_{i \in I}$ and $(y_i)_{i \in I}$ in operator spaces
$X$ and $Y$, respectively, are called {\it $c$-equivalent} if there exists
a map $u : \span[x_i : i \in I] \to \span[y_i : i \in I]$, such that
$u x_i = y_i$, and $\|u\|_{cb} \|u^{-1}\|_{cb} \leq c$.
Two sequences are {\it equivalent} if they are $c$-equivalent,
for some $c$. Furthermore, $(x_i)_{i \in I}$ is $c$-equivalent
to a subfamily of $(y_j)_{j \in J}$ if there exists a subset
$I^\prime \subset J$, such that $|I| = |I^\prime|$, and
$(x_i)_{i \in I}$ is $c$-equivalent to $(y_j)_{j \in I^\prime}$.
Finally, $(x_i)_{i \in I}$ is {\it $c$-equivalent to $(y_i)_{i \in I}$
up to a permutation} if there exists a bijection $\pi : I \to I$
such that $(x_i)_{i \in I}$ is $c$-equivalent to $(y_{\pi(i)})_{i \in I}$.

A sequence $(x_i)_{i \in I}$ is called {\it completely unconditional}
if it is $c$-completely unconditional, for some $c$. A completely
unconditional basis, equivalence of sequences etc. are defined by
dropping the $c$, in a similar manner.

\section{Subspaces of $\row \oplus \col$: basic facts}\label{construct}

Suppose $H$ and $K$ are separable Hilbert spaces, and $A \in B(H,K)$
is a contraction. Denote by $X_\row(H,K,A)$ the subspace of
$H_\row \oplus K_\col$, spanned by $(e, Ae)$ ($e \in H$). If there is
no confusion as to the spaces $H$ and $K$, we simply write $X_\row(A)$. $X_\col(A)$ is defined in the same way.
Often, we write $X(A)$ ($X(H,K,A)$) instead of $X_\row(A)$ ($X_\row(H,K,A)$).
Note that the formal identity map
$id : H \to X(H,K,A) : \xi \mapsto \xi \oplus A \xi$ is an isometry.
Thus, we identify subspaces of $X(H,K,A)$ with those of $H$.
This identification gives meaning to the notation $A|_Y$, where
$Y \hra X(A)$.

\begin{remark}\label{connections}
Although the spaces $\row$ and $\col$ are ``simple'', the structure of
their direct sum $\row \oplus C$ is rather rich.
For instance, it was shown in \cite{Xu} that the ``operator Hilbert space''
$OH$ is a subspace of a quotient of $\row \oplus \col$
(actually, the results of that paper are much more general).
It follows from \cite{Xu} that the spaces $X(A)$ ($A \in B(\ell_2)$)
defined as above are natural ``building blocks'' of subspaces of
$\row \oplus \col$.
\end{remark}

Begin by stating a simple lemma.

\begin{lemma}\label{simple_X(A)}
Suppose $H$, $K$, and $K^\prime$ are Hilbert spaces, and $A \in B(H,K)$
is a contraction.
\begin{enumerate}
\item
Suppose $U \in B(K,K^\prime)$ is such that $\|U \xi\| = \|\xi\|$ for
any $\xi \in \ran A$. Then $X(H,K,A)$ is completely isometric to
$X(H,K^\prime,UA)$. In particular, $X(H,K,A)$ is completely isometric to
$X(H,H,|A|)$, where $|A| = (A^* A)^{1/2}$.
\item
Suppose $P_1, \ldots, P_m$ and $Q_1, \ldots, Q_m$ are families
of orthogonal mutually orthogonal projections on $H$ and $K$,
respectively, such that $\sum_k P_k = I_H$, $\sum_k Q_k = I_K$, and
$A = \sum_{k=1}^m Q_k A P_k$. Set $H_k = P_k(H)$ and $K_k = Q_k(K)$.
Then the formal identity operator $id$ from $X(H,K,A)$ to
$X(H_1,K_1,Q_1AP_1) \oplus \ldots \oplus X(H_m,K_m,Q_mAP_m)$ is
a complete contraction, and $\|id^{-1}\|_{cb} \leq \sqrt{m}$.
\item
$X(A)$ is $|\lambda|^{-1}$-completely isomorphic to $X(\lambda A)$ whenever
$0 < |\lambda| \leq 1$.
\end{enumerate}
\end{lemma}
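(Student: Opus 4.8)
The plan is to prove the three parts in order, exploiting in each case the explicit description of $X(A)$ as the graph $\{(e,Ae):e\in H\}$ inside $H_\row\oplus K_\col$ together with the matricial norm formulas in properties (3) and (4) of the previous section. For part (1), I would first observe that the map $e\oplus Ae\mapsto e\oplus UAe$ is the restriction to $X(H,K,A)$ of $I_{H_\row}\oplus (U\otimes\text{stuff})$ — more precisely, since $U$ is isometric on $\ran A$, it extends to a partial isometry on $K$ and hence induces a complete contraction $K_\col\to K'_\col$ (property (2) of column spaces: $\|U\|=\|U\|_{cb}$ on column spaces). The composite $H_\row\oplus K_\col\to H_\row\oplus K'_\col$ restricts to a complete contraction $X(H,K,A)\to X(H,K',UA)$; running the partial isometry backwards on $\overline{\ran A}$ gives the complete-contractive inverse, so the restriction is a complete isometry. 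The special case $K'=H$, $U$ the polar-decomposition partial isometry with $A=U|A|$, then yields $X(H,K,A)\coi{1}X(H,H,|A|)$ since $|A|=U^*A$ and $U^*$ is isometric on $\ran A$.

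For part (2), I would write a general element $v\in M_n(X(H,K,A))$ as $\sum_k v_k$ where $v_k\in M_n(X(H_k,K_k,Q_kAP_k))$ corresponds to the $P_k$-component of the underlying vector in $H$. The formal identity sends $v$ to $(v_1,\dots,v_m)$ in the direct sum. That this is a complete contraction is immediate from property (4) applied separately to the row part ($H=\bigoplus_2 H_k$) and the column part ($K=\bigoplus_2 K_k$), since the $\oplus_\infty$ norm is the max of the two; the $\sqrt m$ bound on $\|id^{-1}\|_{cb}$ is the $\|id^{-1}\|_{cb}\le\sqrt m$ clause of property (4), again used in parallel on rows and columns and combined via the max. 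The only mild care needed is checking that the decomposition $v=\sum_k v_k$ is compatible on both coordinates simultaneously, which is exactly the hypothesis $A=\sum_k Q_kAP_k$ (so the $K$-coordinate of the $H_k$-part lands in $K_k$).

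For part (3), scaling $A$ by $\lambda$ with $0<|\lambda|\le1$, the natural map $u:X(A)\to X(\lambda A)$ is $e\oplus Ae\mapsto e\oplus\lambda Ae$; viewed on $H$ it is the identity, but as a map of operator spaces it is the restriction of $I_{H_\row}\oplus(\lambda\, I_{K_\col})$, which is a complete contraction (c.b.\ norm $\max(1,|\lambda|)=1$), while its inverse is the restriction of $I_{H_\row}\oplus(\lambda^{-1}I_{K_\col})$, of c.b.\ norm $\max(1,|\lambda|^{-1})=|\lambda|^{-1}$. Hence $\|u\|_{cb}\|u^{-1}\|_{cb}\le|\lambda|^{-1}$. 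I expect the main (though still modest) obstacle to be part (1): one must be slightly careful that $U$ need only be isometric on $\ran A$, not on all of $K$, so the reduction genuinely goes through a partial isometry, and one should verify the inverse map is well-defined and completely bounded on the relevant subspace rather than on $K'_\col$ as a whole. Everything else is a direct application of the listed properties of row and column spaces.
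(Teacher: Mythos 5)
Your proposal is correct and, for part (2) --- the only part the paper actually proves --- it follows essentially the same route: decompose an element of $M_n(X(H,K,A))$ into its $H_k$-components and apply the $\sqrt{m}$-estimate for direct sums of row (resp.\ column) Hilbert spaces to each coordinate, combining via the max. Your arguments for parts (1) and (3), which the paper omits as routine, are also correct, including the care you take that $U$ is only isometric on $\ran A$ and hence must be inverted as a partial isometry on $\overline{\ran A}$.
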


\begin{proof}
We only establish part (2). A Gram-Schmidt orthogonalization shows that
any element $x \in M_n(H)$ can be written as
$x = x_1 + \ldots + x_n$, with $x_k = \sum_i a_{ki} \otimes \xi_{ki}$
$a_{ki} \in M_n$, and $(\xi_{ki})$ a finite orthonormal systems
in $H_k$. Then
$$
\|x\|_{M_n(X(A))} = \max \big\{
\|\sum_k \sum_i a_{ki} \otimes \xi_{ki}\|_{M_n(H_\row)} ,
\|\sum_k \sum_i a_{ki} \otimes A \xi_{ki}\|_{M_n(K_\col)} \big\} .
$$
By the basic properties of row and column spaces (listed at the end
of the previous section),
$$
\eqalign{
\max_k \|\sum_i a_{ki} \otimes \xi_{ki}\|_{M_n(H_\row)}
&
\leq
\|\sum_{k,i} a_{ki} \otimes \xi_{ki}\|_{M_n(H_\row)}
\cr
&
\leq 
\sqrt{m} \max_k \|\sum_i a_{ki} \otimes \xi_{ki}\|_{M_n(H_\row)} .
}
$$
Furthermore, the vectors $A \xi_{ki}$ belong to the mutually
orthogonal spaces $K_k$, hence
$$
\eqalign{
\max_k \|\sum_i a_{ki} \otimes A \xi_{ki}\|_{M_n(H_\col)}
&
\leq
\|\sum_{k,i} a_{ki} \otimes A \xi_{ki}\|_{M_n(H_\col)}
\cr
&
\leq 
\sqrt{m} \max_k \|\sum_i a_{ki} \otimes A \xi_{ki}\|_{M_n(H_\col)} .
}
$$
Note that
$$
\eqalign{
&
\|x_k\|_{M_n(X(H_k, Q_k, Q_kAP_k))}
\cr
&
=
\max\ \{ \|\sum_i a_{ki} \otimes \xi_{ki}\|_{M_n(H_\row)} ,
\|\sum_i a_{ki} \otimes A \xi_{ki}\|_{M_n(H_\col)} \} .
}
$$
Therefore,
$$
\eqalign{
&
\max_k \|x_k\|_{M_n(X(H_k, Q_k, Q_kAP_k))}
\cr
&
=
\max \{ \max_k \|\sum_i a_{ki} \otimes \xi_{ki}\|_{M_n(H_\row)} ,
\max_k \|\sum_i a_{ki} \otimes A \xi_{ki}\|_{M_n(H_\col)} \}
\cr
&
\leq \|x\|_{M_n(X(A))}
\cr
&
\leq
\sqrt{m} \max \{ \max_k \|\sum_i a_{ki} \otimes \xi_{ki}\|_{M_n(H_\row)} ,
\max_k \|\sum_i a_{ki} \otimes A \xi_{ki}\|_{M_n(H_\col)} \} .
}
$$
We complete the proof by noting that
$$
\|x\|_{M_n(X(H_1,K_1,Q_1AP_1) \oplus \ldots \oplus X(H_m,K_m,Q_mAP_m))} =
\max_{1 \leq k \leq m} \|x_k\|_{M_n(X(H_k, Q_k, Q_kAP_k))} .
$$
\end{proof}

Next, we establish our key tool for computing c.b.~norms.

\begin{lemma}\label{compute_norm}
Suppose $A$ and $B$ are contractions, and $T \in B(X(A), X(B))$. Then
$$
\|T\|_{cb} = \max \big\{ \|T\|, \sup \{ \|BTu\|_2 : 
u \in B(\ell_2, X(A)) , \, \|Au\|_2 \leq 1  , \, \|u\| \leq 1 \} \big\} .
$$
\end{lemma}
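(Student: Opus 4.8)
Let me denote $X=X(A)\hookrightarrow H_\row\oplus K_\col$ and $Y=X(B)\hookrightarrow H'_\row\oplus K'_\col$, and recall that via the formal identities we may think of $T$ as a bounded operator on the underlying Hilbert spaces intertwining $A$ and $B$. The plan is to compute $\|T\|_{cb}$ by testing against an arbitrary $v\in M_n(X(A))$, written after Gram--Schmidt as $v=\sum_i a_i\otimes\xi_i$ with $(\xi_i)$ orthonormal in $H$ and $a_i\in M_n$. By the direct-sum formula and properties (3) at the end of Section~\ref{op_sp},
\[
\|v\|_{M_n(X(A))}^2=\max\Bigl\{\,\bignorm{\textstyle\sum_i a_ia_i^*},\ \bignorm{\textstyle\sum_i (A\xi_i)^*(A\xi_i)\,??}\Bigr\},
\]
— more precisely, the row part is $\|\sum_i a_ia_i^*\|$ and the column part is $\|\sum_{i,j}\langle A\xi_j,A\xi_i\rangle\, a_i^*a_j\|$ (expanding the Hilbert--Schmidt/column norm over the not-necessarily-orthogonal vectors $A\xi_i$). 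The same holds for $Tv=\sum_i a_i\otimes T\xi_i$ in $M_n(X(B))$, with $\xi_i$ replaced by $T\xi_i$ and $A\xi_i$ by $BT\xi_i$. So $\|I_{M_n}\otimes T\|\le c$ for all $n$ exactly when, for every finite orthonormal $(\xi_i)$ in $H$ and every matrix family $(a_i)$, both the row and the column inequalities hold.

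The row comparison is handled by the ordinary norm: $\|\sum_i a_i\otimes T\xi_i\|_{M_n(\row)}\le\|T\|\,\|\sum_i a_i\otimes\xi_i\|_{M_n(\row)}$ since $\row$ is a homogeneous operator space (property (2)), so this term contributes exactly $\|T\|$ to the supremum. The substance is the column comparison. Here I would package the data $(\xi_i,a_i)$ into a single operator: given the $a_i\in M_n=B(\ell_2^n)$ and orthonormal $\xi_i\in H$, define $u\in B(\ell_2^n, H)$ — or rather, to match the statement, $u\in B(\ell_2,X(A))$ — by $u=\sum_i \xi_i\, a_i^{(\text{row })i}$, i.e. $u$ sends the standard basis of (a copy of) $\ell_2$ into combinations of the $\xi_i$ with matrix coefficients; the point is that $\|Au\|_2^2=\sum\|\text{columns}\|$ encodes precisely $\|\sum_i a_i\otimes A\xi_i\|_{M_n(K_\col)}^2$ up to the normalization, while $\|u\|$ (the operator norm) encodes the row norm $\|\sum_i a_i\otimes\xi_i\|_{M_n(\row)}$, and $\|BTu\|_2$ encodes $\|\sum_i a_i\otimes BT\xi_i\|_{M_n(K_\col)}$. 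The constraints $\|Au\|_2\le1$ and $\|u\|\le1$ are exactly the statement that the test element $v$ has $\|v\|_{M_n(X(A))}\le 1$, and then $\|Tv\|_{M_n(X(B))}=\max\{\|Tu\|,\|BTu\|_2\}\le\max\{\|T\|,\|BTu\|_2\}$. Taking the supremum over all such $u$ (finite-rank $u$ suffice by approximation, and finite rank is where the $M_n$ picture lives) yields ``$\le$'' in the asserted formula; the reverse inequality ``$\ge$'' is immediate because each such $u$ arises from an actual element of some $M_n(X(A))$ of norm $\le1$, so $\|BTu\|_2\le\|Tv\|_{M_n(X(B))}\le\|T\|_{cb}$, and trivially $\|T\|\le\|T\|_{cb}$.

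The main obstacle I anticipate is bookkeeping rather than conceptual: making the dictionary between ``a finite orthonormal system $(\xi_i)$ together with matrices $(a_i)\in M_n$'' and ``an operator $u\in B(\ell_2,X(A))$'' precise and reversible, so that the three quantities $\|u\|$, $\|Au\|_2$, $\|BTu\|_2$ line up exactly (no stray $\sqrt{n}$ factors) with the row norm, column norm of $v$, and column norm of $Tv$. The clean way is to fix $u\in B(\ell_2,H)$ arbitrary of finite rank $r$, let $(\xi_i)_{i\le r}$ be an orthonormal basis of $\Range u\subseteq H$ and $a_i\in M_r$ the matrix of coordinates, so that $v:=\sum_i a_i\otimes\xi_i\in M_r(X(A))$; then check directly from property (3) that $\|v\|_{M_r(X(A))}=\max\{\|u\|,\|Au\|_2\}$ and $\|Tv\|_{M_r(X(B))}=\max\{\|Tu\|,\|BTu\|_2\}$ (using that $Tv=\sum_i a_i\otimes T\xi_i$ and that the column norm of $\sum a_i\otimes\eta_i$ equals the Hilbert--Schmidt norm of the operator $\sum \eta_i a_i^{*}$, which is $\|$ the corresponding operator into $K'$ $\|_2$). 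Once this identity is in hand, one direction is ``take sup over finite-rank $u$ and approximate general $u$'' and the other is ``every finite-rank $u$ gives a legitimate matrix-level test element'', and the lemma follows.
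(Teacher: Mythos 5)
Your overall shape matches the paper's: split $\|T\|_{cb}$ into the row component (handled by homogeneity of $\row$, contributing exactly $\|T\|$) and the column component $BT:X(A)\to\col$ (related to Hilbert--Schmidt norms of test operators $u$). Your ``$\geq$'' direction is fine: an operator $u$ with $\|u\|\leq 1$ and $\|Au\|_2\leq 1$ does produce a legitimate test element $v=\sum_i a_i\otimes\xi_i$ with all $a_i$ supported in a single column of $M_r$, and for such $v$ one checks, as you do, that $\|v\|_{M_r(X(A))}=\max\{\|u\|,\|Au\|_2\}$ and that the column part of $\|Tv\|$ is $\|BTu\|_2$.

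The gap is in ``$\leq$''. You correctly record that for general $a_i\in M_n$ the column part of $\|v\|_{M_n(X(A))}$ is the operator norm $\bignorm{\sum_{i,j}\langle A\xi_j,A\xi_i\rangle a_i^*a_j}$, but two lines later you assert that $\|Au\|_2^2$ ``encodes precisely'' this quantity. It does not: $\|Au\|_2^2$ is a trace, the column norm is a supremum, and they already disagree for $a_i=E_{ii}$ with the $A\xi_i$ orthogonal (the packaged Hilbert--Schmidt quantity is $\sum_i\|A\xi_i\|^2$ while the column norm squared is $\max_i\|A\xi_i\|^2$). Consequently your dictionary only covers test elements $v$ whose coefficients live in a single column, i.e.\ $v\in\col\otimes X(A)$, and neither of your two closing steps (``sup over finite-rank $u$'' and ``every $u$ gives a test element'') addresses a general $v\in M_n(X(A))$. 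What is missing is exactly the step the paper takes via property (7) of Section~\ref{op_sp} (the Smith-lemma fact): for a map into a column space, $\|BT\|_{CB(X(A),\col)}=\|I_\col\otimes BT\|_{B(\col\otimes X(A),\,\col\otimes\col)}$, so it suffices to test on $\col\otimes X(A)$, whose unit ball is precisely the set of $u\in B(\ell_2,X(A))$ with $\max\{\|u\|,\|Au\|_2\}\leq 1$. With that reduction in hand your bookkeeping closes the argument; without it, the inequality $\|T\|_{cb}\leq{}$RHS is unproved.
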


\begin{proof}
By definition, 
$\|T\|_{cb} = \max\{\|T\|_{CB(X(A), \row)}, \|B T\|_{CB(X(A), \col)}\}$.
To estimate the first term, note that $id : X(A) \to \row$
is a complete contraction, hence
$$
\|T\|_{CB(X(A), \row)} = \|T \circ id\|_{CB(X(A), \row)} \leq
\|T\|_{CB(\row, \row)} \|id\|_{CB(X(A), \row)} = \|T\| .
$$
However, $\|T\| \leq \|T\|_{CB(X(A), \row)}$, hence 
$\|T\| = \|T\|_{CB(X(A), \row)}$.

Next we estimate $\|B T\|_{CB(X(A), \col)}$. We know that
$$
\|B T\|_{CB(X(A), \col)} = \sup \big\{ 
\|(I_\col \otimes BT)(x)\|_{\col \otimes \col} :
x \in \col \otimes X(A) , \, \|x\|_{\col \otimes X(A)} \leq 1 \big\} .
$$
Identifying elements of $\col \otimes X(A)$ and $\col \otimes \col$
with operators from $\row$ to $X(A)$ and $\col$, respectively, we see
that
$$
\|B T\|_{CB(X(A), \col)} = \sup \big\{ 
\|BTu\|_2 : u \in CB(\row, X(A)) , \, \|u\|_{cb} \leq 1 \big\} .
$$
But
$\|u\|_{cb} = \max\{ \|u\|_{CB(\row)}, \|Au\|_{CB(\row, \col)} \} =
\max\{ \|u\|, \|Au\|_2 \}$.
\end{proof}

Next we examine the exactness of $X(A)^*$.
Recall that an operator space $X$ is called {\it exact} if
there exists $c > 0$ such that, for any finite dimensional
subspace $E$ of $X$, there exists an operator $u$ from $E$ to
a subspace $F$ of $M_n$ ($n \in \N$), such that
$\|u\|_{cb} \|u^{-1}\|_{cb} < c$. The infimum of all the
$c$'s like this is called the {\it exactness constant} of $X$.
It is easy to see that $\row \oplus \col$ is $1$-exact, hence
so is $X(A)$. The case of the dual is different.

\begin{proposition}\label{exact}
Suppose $A \in B(H,K)$ is a contraction. Then
the exactness constant of $X(A)^*$ is at least $2^{-5/2} \|A\|_2$.
In particular, $X(A)^*$ is not exact if $A$ is not Hilbert-Schmidt.
\end{proposition}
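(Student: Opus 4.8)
The plan is to exhibit, inside $X(A)^*$, a finite-dimensional subspace that is poorly representable in matrix algebras, with the quantitative obstruction governed by $\|A\|_2$. Since $X(A) \hra \row \oplus \col$, its dual is a quotient of $(\row \oplus \col)^* = \col \oplus \row$; concretely, $X(A)^*$ is the quotient of $\col \oplus \row$ by the annihilator of $X(A)$, and one computes that the quotient map identifies $X(A)^*$ with the space spanned by the functionals $f_e := (e,0)|_{X(A)} = -(0,Ae)|_{X(A)}$ together with the ``free'' row part; more usefully, $X(A)^*$ contains a completely isometric copy of $C + R$-type pieces. The key point I would isolate first is: for an orthonormal sequence $(e_i)_{i=1}^n$ in $H$, the restriction functionals $x_i^* \in X(A)^*$ dual to the basis $(e_i \oplus Ae_i)$ span a subspace whose operator space structure is a compression involving both a column summand (coming from $H_\row^* = H_\col$) and a row summand weighted by $A$ (coming from $K_\col^* = K_\row$).

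\medskip

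The main step is a lower bound for the exactness constant via duality between exactness and the slice-map/local reflexivity properties, or more elementarily, via the fact that exactness passes to subspaces with the same constant, so it suffices to find \emph{one} bad $n$-dimensional subspace $E_n \subset X(A)^*$. The standard obstruction is the following: if $E$ is $c$-exact and $d$-dimensional, then for any operator space $Y$, the formal identity $E \otimes Y \to E \otimes_{\min} Y$ has norm controlled by $c$ relative to the behaviour on $M_n \otimes Y$; equivalently, one tests against the canonical element. I would compute $\|id : R_n \cap C_n \to R_n + C_n\|$-style quantities. Precisely: let $\xi = \sum_i e_i^* \otimes e_i^* \in E_n \otimes E_n$ where $E_n = \span[x_i^*]$. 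On one hand, because $E_n$ sits in $X(A)^*$ which carries a mixed row/column structure, $\|\xi\|_{\min}$ is comparable to $\max\{1, \|A\|_2\}$ up to an absolute constant (the column part contributes $1$, the row part weighted by the Hilbert–Schmidt data of $A$ contributes $\|A\|_2$ — this is where property (5) of row/column spaces, identifying the c.b.\ norm of a row-to-column map with the Hilbert–Schmidt norm, enters). On the other hand, if $u : E_n \to F \subset M_N$ has $\|u\|_{cb}\|u^{-1}\|_{cb} \le c$, then $\|\xi\|_{\min} \le c^2 \|(u\otimes u)\xi\|_{M_N \otimes M_N} \le c^2 \cdot$ (something of order $1$), because inside $M_N \otimes M_N$ the element $\sum u(x_i^*)\otimes u(x_i^*)$ is a single operator whose norm is bounded using that $(x_i^*)$ is, after normalization, an orthonormal-like system (by the ``similar to orthonormal'' remark, since $X(A)^*$ is Hilbertian as a Banach space). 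Comparing gives $c^2 \gtrsim \|A\|_2$, hence $c \gtrsim \|A\|_2^{1/2}$ — wait, the claimed bound is linear in $\|A\|_2$, so instead I would run the comparison so that a single factor of $c$ absorbs the gap: use that exactness constant $c$ means $E_n$ embeds into $M_N$ with distortion $c$, test the \emph{norm of one specific vector} (not a tensor), namely the vector implementing the row/column incompatibility, giving directly $c \geq \frac{1}{2^{5/2}}\|A\|_2$ by tracking the constants in Lemma~\ref{compute_norm} dualized and in property (4) (the $\sqrt m$ with $m=2$).

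\medskip

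Concretely, the chain I would write down is: choose $(e_i)_{i=1}^n$ orthonormal with $\sum \|Ae_i\|^2$ close to $\|A\|_2^2$ (possible when $A$ is not Hilbert–Schmidt, with $n\to\infty$); let $E_n = \span[x_i^* : i \le n] \subset X(A)^*$; on $X(A)$ itself, the identity on $\span[e_i\oplus Ae_i]$ decomposes via Lemma~\ref{simple_X(A)}(2) (with $m=2$) and the row/column norm formulas (property (3)) show that $E_n$ contains, $2^{5/2}$-isomorphically and completely, a copy of $R_n$ scaled by $\|A\|_2/\sqrt n$ sitting alongside $C_n$; but $R_n$ inside $M_N$ only ``costs'' $\sqrt n$ to distort into $C_n$, so any c.b.-isomorphism of $E_n$ into $M_N$ must have product of c.b.-norms at least $2^{-5/2}\|A\|_2$. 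The \textbf{main obstacle} I anticipate is the first bullet: correctly identifying the operator space structure of $X(A)^*$ and pinning down which functionals give the row part weighted by $\|Ae_i\|$ versus the column part — i.e., dualizing Lemma~\ref{compute_norm} cleanly so that the Hilbert–Schmidt norm $\|A\|_2$ appears with the right constant. Once the dual structure is in hand, the exactness lower bound is the routine ``$R_n$ vs.\ $C_n$ in $M_N$'' argument. Letting $n\to\infty$ in the non-Hilbert–Schmidt case yields unbounded exactness constant, so $X(A)^*$ is not exact.
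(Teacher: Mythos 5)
There is a genuine gap, and it sits exactly where you flag your ``main obstacle'' --- but the problem is not merely identifying the dual operator space structure; it is that neither of the two mechanisms you sketch for the lower bound can work. First, testing ``the norm of one specific vector (not a tensor)'' is vacuous: if $u : E_n \to F \subset M_N$ has $\|u\|_{cb}\|u^{-1}\|_{cb} \le c$, then \emph{every} vector norm is preserved up to the factor $c$, and since $M_N$ contains an isometric copy of any Banach space of dimension $\le N^2$, a Banach-level norm computation produces no obstruction to exactness whatsoever; the obstruction must live at the matricial level. Second, the concluding claim --- ``$E_n$ contains a scaled $R_n$ sitting alongside $C_n$, and $R_n$ costs $\sqrt n$ to distort into $C_n$, so any c.b.-embedding into $M_N$ has distortion $\ge 2^{-5/2}\|A\|_2$'' --- is a non sequitur: $R_n \oplus_\infty C_n$ also ``contains both'' and embeds completely isometrically into $M_{2n}$. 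The whole difficulty is that the dual carries the \emph{sum}-type structure (roughly $C + A\cdot R$, as a quotient of $\col \oplus_1 \row$), and proving that $R_n + C_n$-type spaces have exactness constant of order $\sqrt n$ is itself a theorem, not a routine distortion count. You supply no mechanism that exploits the quotient/sum structure.

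The paper avoids computing the dual structure altogether. It invokes Corollary 0.7 of Pisier--Shlyakhtenko: if ${\mathrm{ex}}(X(A)^*) < \infty$, then $id_{X(A)} = S_2S_1 + T_2T_1$ with $T_1 : X(A) \to \row$, $T_2 : \row \to X(A)$, $S_1 : X(A) \to \col$, $S_2 : \col \to X(A)$, and $\|S_1\|_{cb}\|S_2\|_{cb} + \|T_1\|_{cb}\|T_2\|_{cb} \le 2^{5/2}\,{\mathrm{ex}}(X(A)^*)$ (this factorization theorem is where the constant $2^{5/2}$ comes from). Then Lemma~\ref{compute_norm} gives $\|T_2\|_{cb} \ge \|AT_2\|_2$ and $\|S_2\|_{cb} \ge \|S_2\|_2 \ge \|AS_2\|_2$, and the triangle inequality for the Hilbert--Schmidt norm yields $\|S_1\|\,\|AS_2\|_2 + \|T_1\|\,\|AT_2\|_2 \ge \|A(S_2S_1+T_2T_1)\|_2 = \|A\|_2$. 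Your proposal correctly senses that $\|A\|_2$ measures a row/column incompatibility in the dual, but without this factorization theorem (or an equivalent tensor-norm argument of comparable depth, e.g.\ via free generators or random matrices) the lower bound does not follow. To repair your route you would have to actually prove the non-exactness of the weighted $R+C$ sums, which is essentially equivalent to the result you are trying to establish.
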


\begin{proof}
Let $c = 2^{5/2} {\mathrm{ex}}(X(A)^*)$. By Corollary 0.7 of \cite{PiS},
there exist operators $T_1 : X(A) \to \row$, $T_2 : \row \to X(A)$,
$S_1 : X(A) \to \col$, and $T_2 : \col \to X(A)$, such that
$id = S_2 S_1 + T_2 T_1$ ($id$ is the identity on $X(A)$), and
$$
\|S_1\|_{cb} \|S_2\|_{cb} + \|T_1\|_{cb} \|T_2\|_{cb} \leq
\max \{ \|S_1\|_{cb}, \|T_1\|_{cb} \} (\|S_2\|_{cb} + \|T_2\|_{cb})
\leq c
$$
By Lemma~\ref{compute_norm}, 
$\|T_2\|_{cb} \geq \|A T_2\|_2$, and a simple calculation yields
$$
\|S_2\|_{cb} \geq \|S_2\|_{CB(\col, \row)} = \|S_2\|_2 \geq \|A S_2\|_2 .
$$
Therefore,
$$
\eqalign{
\|S_1\|_{cb} \|S_2\|_{cb} + \|T_1\|_{cb} \|T_2\|_{cb}
&
\geq
\|S_1\| \|A S_2\|_2 + \|T_1\| \|A T_2\|_2
\cr
&
\geq
\|A (S_2 S_1 + T_2 T_1)\|_2 = \|A\|_2 ,
}
$$
which implies the desired estimate for $c$.
\end{proof}

\begin{corollary}\label{ci_to_row}
The space $X(A)$ is completely isomorphic to $\row$ if and only if
$A$ is Hilbert-Schmidt. Furthermore, the formal identity map
$id : \row \to X(A)$ is completely contractive, and
$\|id^{-1}\|_{cb} = \max\{1, \|A\|_2\}$.
\end{corollary}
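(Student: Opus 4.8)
The plan is to deduce the entire corollary from Lemma~\ref{compute_norm} together with Proposition~\ref{exact}, using the observation that the zero contraction gives $X(0)=\row$ completely isometrically (the subspace of $H_\row\oplus K_\col$ spanned by the vectors $(e,0)$ is just $H_\row=\row$). The quantitative ``furthermore'' clause is then a direct two‑line application of Lemma~\ref{compute_norm}; the equivalence follows from it in the Hilbert--Schmidt direction, while the converse needs the non‑exactness recorded in Proposition~\ref{exact} together with duality. I will treat the furthermore clause first and then read off the equivalence.

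For the furthermore clause I would apply Lemma~\ref{compute_norm} to each of the two formal identities relating $X(0)=\row$ and $X(A)$. For one of them the codomain contraction appearing in front of the supremum in Lemma~\ref{compute_norm} is the zero contraction (since $\row=X(0)$), so that supremum term vanishes identically; the cb‑norm then collapses to the Banach‑space norm of the formal identity $\xi\mapsto\xi\oplus A\xi$, which equals $1$ because this map is an isometry, so that formal identity is completely contractive. For the other the supremum term becomes $\sup\{\|Au\|_2:u\in B(\ell_2,\row),\ \|u\|\leq1\}$, and since $\|Au\|_2\leq\|A\|_2\|u\|$ with equality approached as $u\to I$ through finite‑rank contractions, this supremum equals $\|A\|_2$; hence the cb‑norm is $\max\{1,\|A\|_2\}$. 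These two computations are exactly the two assertions of the furthermore clause: $id:\row\to X(A)$ is completely contractive and $\|id^{-1}\|_{cb}=\max\{1,\|A\|_2\}$.

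The equivalence now follows. If $A$ is Hilbert--Schmidt then $\|A\|_2<\infty$, so by the previous step both formal identities are completely bounded, with cb‑norms $1$ and $\max\{1,\|A\|_2\}$; together they witness $X(A)\simeq\row$. For the converse I would argue contrapositively: if $A$ is not Hilbert--Schmidt, then Proposition~\ref{exact} shows that $X(A)^*$ is not exact. On the other hand $\row^*=\col$ by the duality listed in Section~\ref{op_sp}, and $\col$ is $1$‑exact, being a subspace of the $1$‑exact space $\row\oplus\col$. Since exactness is preserved under complete isomorphism and duality carries complete isomorphisms to complete isomorphisms, $X(A)\simeq\row$ would force $X(A)^*\simeq\col$ to be exact, a contradiction; hence $X(A)\simeq\row$ implies $A$ is Hilbert--Schmidt. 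The one step that is not a mechanical application of Lemma~\ref{compute_norm} is precisely this converse: a complete isomorphism $X(A)\simeq\row$ need not be implemented by the formal identity, so its existence cannot be excluded by the cb‑norm computation alone. The substitute is the non‑exactness of $X(A)^*$, which is a complete‑isomorphism invariant and therefore robust against an arbitrary choice of isomorphism; this is the crux of the proof.
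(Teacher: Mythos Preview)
Your proof follows exactly the paper's route: apply Lemma~\ref{compute_norm} with $\row=X(0)$ to the two formal identities, then invoke Proposition~\ref{exact} together with the exactness of $\row^*=\col$ for the non--Hilbert--Schmidt converse. One bookkeeping slip worth noting: the map whose \emph{codomain} contraction vanishes in Lemma~\ref{compute_norm} is $id^{-1}:X(A)\to X(0)$, i.e.\ $\xi\oplus A\xi\mapsto\xi$, not the map $\xi\mapsto\xi\oplus A\xi$ you name there; so your two computations in fact give $\|id^{-1}\|_{cb}=1$ and $\|id:\row\to X(A)\|_{cb}=\max\{1,\|A\|_2\}$, with the labels interchanged relative to your summary---but since both maps are Banach isometries and only the pair of cb-norms matters for the equivalence, the argument is unaffected.
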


\begin{proof}
The estimates on the c.b.~norm of $id$ and $id^{-1}$ follow from
Lemma~\ref{compute_norm}. Thus, $X(A) \coi{\max\{1,\|A\|_2\}} \row$
whenever $A$ is Hilbert-Schmidt. On the other hand, if $A$ is not Hilbert-Schmidt,
then, by Proposition~\ref{exact}, $X(A)^*$ is not exact. However,
$\row^* = \col$ is exact, thus $X(A)$ is not completely isomorphic to $\row$.
\end{proof}


We say that $A \in B(H,K)$ ($H$ and $K$ are Hilbert spaces) is
{\it diagonalizable} if the eigenvectors of $|A| = (A^* A)^{1/2}$
span $H$. Equivalently, there exist orthonormal systems $(\xi_i)$
and $(\eta_i)$ in $H$ and $K$, respectively, and a sequence
$(\lambda_i)$ of non-negative numbers, such that
$A = \sum_i \lambda_i \xi_i \otimes \eta_i$.

When working with $X(A)$, it is often convenient to have $A$
diagonalizable. While every compact operator is diagonalizable,
a non-compact one need not have this property. However, we have:

\begin{lemma}\label{diagonal}
Suppose $H$ and $K$ are separable Hilbert spaces, $A \in B(H,K)$ is
a contraction, and $\vr > 0$. Then there exists a diagonalizable contraction
$B \in B(H,K)$ such that $\|A-B\|_2 \leq \vr$, and $X(A)$ is
$(1+\vr)^2$-completely isomorphic to $X(B)$.
If $A$ is non-negative, $B$ can be selected to be non-negative, too.
\end{lemma}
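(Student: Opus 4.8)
The idea is a standard perturbation argument: approximate $A$ in the Hilbert--Schmidt norm by an operator with large kernel, on the complement of which $A$ is already diagonalizable, and then control the distortion of $X(A)$ via the c.b.-norm formula of Lemma~\ref{compute_norm}. First I would reduce to the non-negative case. By Lemma~\ref{simple_X(A)}(1), $X(H,K,A)$ is completely isometric to $X(H,H,|A|)$, so it suffices to find, for a non-negative contraction $A \in B(H)$ and $\vr > 0$, a diagonalizable non-negative contraction $B$ with $\|A - B\|_2 \leq \vr$ and $X(A) \coi{(1+\vr)^2} X(B)$; the final sentence of the statement then comes for free. (To handle a possibly non-Hilbert--Schmidt $A$, read $\|A-B\|_2 \leq \vr$ as the assertion that $A - B$ \emph{is} Hilbert--Schmidt with that norm bound.)

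Next, the construction of $B$. Using the spectral theorem for the positive contraction $A$, pick a finite collection of mutually orthogonal spectral projections $P_1, \dots, P_m$ of $A$ (projections onto the parts of the spectrum lying in small intervals $[t_{k-1}, t_k)$ of a partition of $[0,1]$ with mesh $< \delta$), together with $P_0 = I - \sum_{k \geq 1} P_k$, and set
\[
B = \sum_{k=1}^m t_k P_k .
\]
Each $P_k$ is a spectral projection of $B$, so $B$ is diagonalizable (a direct sum of scalar multiples of identities on the ranges $H_k = P_k(H)$), non-negative, and a contraction as long as the $t_k \le 1$. Moreover $\|A - B\| \leq \delta$ on each $H_k$ and $\|A-B\|$ is small on $P_0(H)$ too once that piece is chosen to have small norm; more importantly, by choosing the partition so that only finitely many $t_k$ are nonzero on an infinite-dimensional piece and truncating, one arranges $A - B$ to be Hilbert--Schmidt with $\|A - B\|_2 \leq \vr$. (If $A$ is compact this is automatic; the point of the lemma is precisely the non-compact case, where one exploits that $A$ and $B$ differ by a small-norm \emph{and} Hilbert--Schmidt operator because off a finite-rank correction they agree up to the mesh $\delta$, and one lets $\delta \to 0$ along the finitely many relevant spectral bands.)

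Finally, the estimate $X(A) \coi{(1+\vr)^2} X(B)$. I would apply Lemma~\ref{compute_norm} to the formal identity maps $id : X(A) \to X(B)$ and its inverse. Both have ordinary norm $1$ (they come from the identity on $H$). For the c.b.-norm of $id : X(A) \to X(B)$ one must bound $\sup\{\|Bu\|_2 : u \in B(\ell_2, X(A)),\ \|Au\|_2 \leq 1,\ \|u\| \leq 1\}$; writing $B = A + (B - A)$ gives $\|Bu\|_2 \leq \|Au\|_2 + \|(B-A)u\|_2 \leq 1 + \|B - A\|_2 \|u\| \leq 1 + \vr$, and symmetrically for the inverse. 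Hence $\|id\|_{cb}, \|id^{-1}\|_{cb} \leq 1 + \vr$, giving the product bound $(1+\vr)^2$. The main obstacle is the second step in the genuinely non-compact regime: producing $B$ diagonalizable with $A - B$ simultaneously of small \emph{norm} and small \emph{Hilbert--Schmidt} norm. The resolution is that the Hilbert--Schmidt smallness of $A - B$ is only needed to feed Lemma~\ref{compute_norm}, and a spectral-band approximation keeps the relevant correction of controlled Hilbert--Schmidt size (finite-rank plus a tail one shrinks), so letting the mesh and the tail go to zero simultaneously delivers any prescribed $\vr$.
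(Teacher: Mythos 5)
Your reduction to the non-negative case, the truncation to keep $B$ a contraction, and the final c.b.\ estimate via Lemma~\ref{compute_norm} (writing $\|Bu\|_2 \leq \|Au\|_2 + \|B-A\|_2\|u\|$) all match the paper's argument. The gap is in the middle step: the construction of a diagonalizable $B$ with $\|A-B\|_2 \leq \vr$. Your spectral-band operator $B=\sum_k t_k P_k$ satisfies $\|A-B\|\leq\delta$ only in the \emph{operator} norm. On each band $H_k=P_k(H)$ the difference is $A|_{H_k}-t_kI_{H_k}$, and if $A$ has (say) absolutely continuous spectrum --- e.g.\ $A$ is multiplication by $t$ on $L^2[0,1]$ --- this restriction is a non-zero, non-compact operator on an infinite-dimensional space, hence has \emph{infinite} Hilbert--Schmidt norm no matter how fine the mesh. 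Your claim that ``off a finite-rank correction they agree up to the mesh $\delta$'' and that one can therefore let the mesh and a tail go to zero is exactly where the argument breaks: small operator norm on infinitely many infinite-dimensional bands does not sum to a finite, let alone small, Hilbert--Schmidt norm, and no finite-rank correction repairs this.

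What is actually needed here is the Weyl--von Neumann theorem (in the Hilbert--Schmidt ideal; the paper cites Voiculescu \cite{Vo}): every selfadjoint operator on a separable Hilbert space is a Hilbert--Schmidt perturbation, of arbitrarily small $\|\cdot\|_2$-norm, of a diagonalizable selfadjoint operator. This is a genuine theorem whose proof builds an almost-diagonalizing orthonormal basis vector by vector, controlling a finite-rank error at each stage; it is not obtained by the naive spectral partition. The paper applies it to get a selfadjoint diagonalizable $C$ with $\|A-C\|_2<\vr/2$, then truncates the eigenvalues of $C$ to $[0,1]$ to obtain a non-negative contraction $B$, checking $\|B-C\|_2<\vr/2$ by comparing the diagonal entries $\langle\xi_i,C\xi_i\rangle$ with $\langle\xi_i,A\xi_i\rangle\in[0,1]$. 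With that substitution your proof would go through; as written, the existence of $B$ is unproved precisely in the non-compact case the lemma is for.
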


\begin{proof}
By Lemma~\ref{simple_X(A)}(1), it suffices to consider the case of
$A = |A|$, and $H = K$.
By \cite{Vo}, there exists a selfadjoint diagonalizable $C \in B(H)$ such
that $\|A - C\|_2 < \vr/2$. Let $(c_i)$ be the eigenvalues of $C$, and
$(\xi_i)$ the corresponding norm $1$ eigenvectors. Define the operator $B$
by setting $B \xi_i = b_i \xi_i$, where
$$
b_i = \left\{ \begin{array}{ll}
   c_i   &   0 \leq c_i \leq 1  \cr
     0   &   c_i < 0   \cr
     1   &   c_i > 1
\end{array} \right. .
$$
We claim that $\|B - C\|_2 < \vr/2$. Indeed, let
${\mathcal{I}} = \{i : b_i \neq c_i\}$. Then
$$
\|B-C\|_2^2 = \sum_{i \in \mathcal{I}} |b_i - c_i|^2 =
\sum_{i : c_i < 0} |c_i|^2 + \sum_{i : c_i > 1} |b_i|^2 .
$$
But $\langle \xi_i, A \xi_i \rangle \in [0,1]$, hence, for
$c_i < 0$, $|\langle \xi_i, A \xi_i \rangle -
\langle \xi_i, C \xi_i \rangle| \geq |c_i|$. Similarly,
for $c_i > 1$, $|\langle \xi_i, A \xi_i \rangle -
\langle \xi_i, C \xi_i \rangle| \geq |1-c_i|$. Thus,
$$
\frac{\vr^2}{4} > \|A-C\|_2^2 =
\sum_{i,j} |\langle \xi_i, (A-C) \xi_j \rangle|^2 \geq
\sum_i |\langle \xi_i, (A-C) \xi_i \rangle|^2 \geq
\sum_{i \in \mathcal{I}} |b_i - c_i|^2 .
$$
By the triangle inequality, $\|A - B\|_2 < \vr$.
Moreover, $B$ is a non-negative contraction.

Denote the the formal identity map from $X(A)$ to $X(B)$
by $U$. It remains to show that
$\|U\|_{cb}, \|U^{-1}\|_{cb} \leq 1+\vr$.
As $U$ is an isometry, Lemma~\ref{compute_norm} implies
\begin{equation}
\|U\|_{cb} =
\sup \{ \|Bu\|_2 : 
u \in B(\ell_2, X(A)) , \, \|Au\|_2 \leq 1  , \, \|u\| \leq 1 \} .
\label{norm_U}
\end{equation}
By the triangle inequality,
$$
\|Bu\|_2 \leq \|Au\|_2 + \|(B-A)u\|_2 \leq
\|Au\|_2 + \|B-A\|_2 \leq 1 + \vr ,
$$
hence \eqref{norm_U} yields $\|U\|_{cb} \leq 1+\vr$.
$\|U^{-1}\|_{cb}$ is estimated similarly.
\end{proof}

\begin{proposition}\label{add_row}
Suppose $H$ and $K$ are separable Hilbert spaces, $B \in B(H,K)$
is a contraction, and $0 \in \sigma_{ess}(|B|)$. Then
$X(B)$ is $4\sqrt{2}$-completely isomorphic to $X(B) \oplus \row$.
\end{proposition}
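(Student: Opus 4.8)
The plan is to reduce, via Lemma~\ref{diagonal}, to the case where $B$ is diagonalizable, then to use $0\in\sigma_{ess}(|B|)$ to peel off from $X(B)$ an infinite-dimensional direct summand completely isometric to $\row$, and finally to absorb the extra copy of $\row$ appearing in $X(B)\oplus\row$ by means of the complete isomorphism $\row\simeq\row\oplus\row$. Concretely, I would first fix $\vr>0$ with $(1+\vr)^4\le2$ and apply Lemma~\ref{diagonal} to obtain a diagonalizable contraction $B'$ with $\|B-B'\|_2<\vr$ and $X(B)\coi{(1+\vr)^2}X(B')$. Since $0\in\sigma_{ess}(|B|)$ there is an orthonormal sequence $(f_n)$ with $\||B|f_n\|=\|Bf_n\|\to0$ (take unit vectors in the ranges of the spectral projections $\one_{[0,1/n)}(|B|)$, which are all infinite-dimensional since $0\in\sigma_{ess}(|B|)$); as $B-B'$ is Hilbert--Schmidt, hence compact, and $f_n\to0$ weakly, also $\|B'f_n\|\to0$, so $0\in\sigma_{ess}(|B'|)$. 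Hence it suffices to prove $X(B')\oplus\row\coi{2\sqrt2}X(B')$, and after renaming I may assume $B=\sum_i\lambda_i\,\xi_i\otimes\eta_i$ is diagonalizable with $(\xi_i)$, $(\eta_i)$ orthonormal, $0\le\lambda_i\le1$, and (by Lemma~\ref{simple_X(A)}(1)) $K=\span[\eta_i:\lambda_i\ne0]=\overline{\ran B}$.

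Next, since $0\in\sigma_{ess}(|B|)$ forces $\{i:\lambda_i<\delta\}$ to be infinite for every $\delta>0$, I would choose an infinite set of indices $\mathcal J$ with $\sum_{i\in\mathcal J}\lambda_i^2\le1$ and set $H_1=\span[\xi_i:i\notin\mathcal J]$, $H_2=\span[\xi_i:i\in\mathcal J]$. Because $B$ is diagonalizable the ranges $BH_1$ and $BH_2$ are orthogonal, so with $P_j$ the projection onto $H_j$ and $Q_2$ the projection onto $\overline{BH_2}$, $Q_1=I_K-Q_2$, one has $B=Q_1BP_1+Q_2BP_2$; thus Lemma~\ref{simple_X(A)}(2) with $m=2$ gives $X(B)\coi{\sqrt2}X(B_1)\oplus X(B_2)$, where $B_j=B|_{H_j}$. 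As $H_2$ is infinite-dimensional and $\|B_2\|_2^2=\sum_{i\in\mathcal J}\lambda_i^2\le1$, Corollary~\ref{ci_to_row} makes $X(B_2)$ completely isometric to $\row$, so that $X(B)\coi{\sqrt2}X(B_1)\oplus\row$.

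Finally, the properties of row spaces recorded in Section~\ref{op_sp} (item (4) with $H^{(1)}=H^{(2)}=\ell_2$, together with the fact that $(\ell_2\oplus_2\ell_2)_\row$ is completely isometric to $\row$) show that $\row$ is $\sqrt2$-completely isomorphic to $\row\oplus\row$; forming the direct sum of this isomorphism with the identity of $X(B_1)$ does not increase the distortion, since on $\oplus_\infty$ sums one has $\|v\oplus w\|=\max\{\|v\|,\|w\|\}$, so $X(B_1)\oplus\row\coi{\sqrt2}X(B_1)\oplus\row\oplus\row$. Chaining this with the previous step,
\[
X(B)\oplus\row\ \coi{\sqrt2}\ \bigl(X(B_1)\oplus\row\bigr)\oplus\row\ \coi{\sqrt2}\ X(B_1)\oplus\row\ \coi{\sqrt2}\ X(B),
\]
i.e. $X(B)\oplus\row\coi{2\sqrt2}X(B)$ for diagonalizable $B$; reinstating the first reduction turns this into $X(B)\oplus\row\coi{(1+\vr)^4\cdot2\sqrt2}X(B)$, which since $(1+\vr)^4\le2$ lies within $4\sqrt2$.

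I expect the only real difficulty to be the middle step: extracting from $0\in\sigma_{ess}(|B|)$ an infinite-dimensional \emph{reducing} subspace of $H$ on which $B$ is Hilbert--Schmidt with norm $\le1$, so that Lemma~\ref{simple_X(A)}(2) applies with a dimension-free loss and Corollary~\ref{ci_to_row} identifies the resulting block with $\row$. This is exactly why the diagonalization of the first step is indispensable: for a non-diagonalizable contraction the subspace on which $|B|$ is small need not be reducing, and there is then no honest block decomposition to feed into Lemma~\ref{simple_X(A)}(2). Everything else — the elementary fact $\row\simeq\row\oplus\row$ and the bookkeeping of the constants $\sqrt2$ and $1+\vr$ — is routine, and the target $4\sqrt2$ is met with room to spare.
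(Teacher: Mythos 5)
Your proof is correct and follows essentially the same route as the paper's: diagonalize via Lemma~\ref{diagonal}, split off an infinite-rank block of Hilbert--Schmidt norm at most $1$ using Lemma~\ref{simple_X(A)}(2), identify it with $\row$ via Corollary~\ref{ci_to_row}, and absorb the extra summand using $\row \coi{\sqrt{2}} \row \oplus \row$. The only differences are cosmetic --- you verify $0 \in \sigma_{ess}(|B'|)$ by a direct Weyl-sequence argument rather than first reducing to $B = |B|$ and citing stability of the essential spectrum under compact perturbations, and your bookkeeping of constants is slightly sharper --- and your final bound lands within $4\sqrt{2}$ as required.
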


\begin{proof}
We can assume $B = |B|$. By Lemma~\ref{diagonal}, there exists
a non-negative diagonalizable contraction $A$ such that $A-B$ is
Hilbert-Schmidt, and $X(A) \coi{2^{1/4}} X(B)$.
As the essential spectrum is stable under compact
perturbations, $0 \in \sigma_{ess}(A)$. Write $A =  \diag(\alpha)$,
where $\alpha = (\alpha_i)_{i \in I}$. Then $0$ is a cluster point of
the set $(\alpha_i)$. Denote the norm $1$ eigenvectors, corresponding to
$\alpha_i$, by $\xi_i$. Find $I_0 \subset I$ such that
$\sum_{i \in I_0} \alpha_i^2 < 1$. Let $I_1 = I \backslash I_0$,
and let $P_0$ and $P_1$ be the orthogonal projections onto
$H_0 = \span[\xi_i : i \in I_0]$ and
$H_1 = \span[\xi_i : i \in I_1]$, respectively.
By Lemma~\ref{simple_X(A)}, $X(A)$ is $\sqrt{2}$-completely isomorphic to
$X(H_0, H_0, P_0 A P_0) \oplus X(H_1, H_1, P_1 A P_1)$.
By Corollary~\ref{ci_to_row}, $X(H_0, H_0, P_0 A P_0)$ is
completely isometric to $\row$. Thus,
$$
\eqalign{
&
X(A) \coi{\sqrt{2}} X(H_0, H_0, P_0 A P_0) \oplus X(H_1, H_1, P_1 A P_1)
\cr
&
\coi{\sqrt{2}} \row \oplus X(H_1, H_1, P_1 A P_1)
\coi{\sqrt{2}} \row \oplus \row \oplus X(H_1, H_1, P_1 A P_1)
\cr
&
= 
\row \oplus X(H_0, H_0, P_0 A P_0) \oplus X(H_1, H_1, P_1 A P_1)
\coi{\sqrt{2}} \row \oplus X(A) .
}
$$
To summarize, $X(A) \coi{4} X(A) \oplus \row$. As $X(A) \coi{2^{1/4}} X(B)$,
we are done.
\end{proof}

\begin{corollary}\label{inf_ker}
Suppose $H$ and $K$ are separable infinite dimensional Hilbert
spaces, and a contraction $A \in B(H,K)$ satisfies $0 \in \sigma_{ess}(|A|)$.
Consider the operator $\tilde{A} = A \oplus 0$ from
$\tilde{H} = H \oplus_2 \ell_2$ to
$\tilde{K} = K \oplus_2 \ell_2$. Then $X(A) \coi{8} X(\tilde{A})$.
\end{corollary}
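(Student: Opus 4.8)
The plan is to split off the zero block and then quote Proposition~\ref{add_row} directly. First I would observe that $\tilde A = A\oplus 0$ is block-diagonal with respect to the obvious two-block decompositions of $\tilde H = H\oplus_2\ell_2$ and $\tilde K = K\oplus_2\ell_2$: letting $P_1,Q_1$ be the orthogonal projections onto $H$ and $K$, and $P_2,Q_2$ the projections onto the two copies of $\ell_2$, one has $P_1+P_2 = I_{\tilde H}$, $Q_1+Q_2 = I_{\tilde K}$ and $\tilde A = Q_1\tilde A P_1 + Q_2\tilde A P_2$, with $Q_1\tilde A P_1 = A$ and $Q_2\tilde A P_2 = 0$. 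Lemma~\ref{simple_X(A)}(2), applied with $m=2$, then says that the formal identity from $X(\tilde A)$ to $X(H,K,A)\oplus X(\ell_2,\ell_2,0)$ is a complete contraction whose inverse has c.b.~norm at most $\sqrt 2$; that is, $X(\tilde A)\coi{\sqrt 2} X(A)\oplus X(\ell_2,\ell_2,0)$.

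Next I would note that, directly from the definition of $X_\row(\,\cdot\,,\cdot\,,0)$ as the span of the vectors $(e,0)$ with $e\in\ell_2$, the space $X(\ell_2,\ell_2,0)$ is completely isometric to $(\ell_2)_\row = \row$ (here we use that $\ell_2$ is separable and infinite dimensional, so that $(\ell_2)_\row$ is a copy of $\row$). Hence $X(\tilde A)\coi{\sqrt 2} X(A)\oplus\row$.

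Finally, the hypothesis $0\in\sigma_{ess}(|A|)$ is exactly what is needed to apply Proposition~\ref{add_row} with $B=A$, which gives $X(A)\coi{4\sqrt 2} X(A)\oplus\row$, and therefore also $X(A)\oplus\row\coi{4\sqrt 2} X(A)$ by passing to the inverse map. Composing this with the previous step yields a complete isomorphism $X(\tilde A)\to X(A)$ whose c.b.~norm and c.b.~norm of whose inverse multiply to at most $\sqrt 2\cdot 4\sqrt 2 = 8$, i.e.\ $X(A)\coi{8} X(\tilde A)$, as claimed. I do not expect any genuine obstacle here: the argument is just bookkeeping of completely bounded constants along a chain of formal identity maps, and the only points needing (routine) care are checking the block-diagonal hypothesis of Lemma~\ref{simple_X(A)}(2) for $\tilde A$ and confirming that the vanishing column component makes $X(\ell_2,\ell_2,0)$ a genuine copy of $\row$; Proposition~\ref{add_row} does all the substantive work.
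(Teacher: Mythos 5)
Your argument is correct and is essentially identical to the paper's own proof: both split $\tilde A$ via Lemma~\ref{simple_X(A)}(2) to get $X(\tilde A)\coi{\sqrt 2}X(A)\oplus\row$ (the paper cites Corollary~\ref{ci_to_row} to identify $X(\ell_2,\ell_2,0)$ with $\row$, while you verify it from the definition, which amounts to the same thing) and then invoke Proposition~\ref{add_row} to absorb the $\row$ summand, multiplying the constants to get $8$.
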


\begin{proof}
Let $P$ be the orthogonal projection from $\tilde{H}$ onto $H$.
onto $H$. By Lemma~\ref{simple_X(A)}(2) and Corollary~\ref{ci_to_row},
$X(\tilde{A}) \coi{\sqrt{2}} X(A) \oplus \row$. However, by
Proposition~\ref{add_row}, $X(A) \coi{4\sqrt{2}} X(A) \oplus \row$.
\end{proof}

\section{Classification of subspaces using sequences}\label{sequences}

In this section, we study the spaces $X(A)$ when $A$ is compact, and
establish a connection between such spaces and a certain family of sequences.
We denote by $\comp$ the space of compact contractions $A \in B(\ell_2)$,
which are not Hilbert-Schmidt. We denote by $\fami$ the set of all spaces
$X(A)$, with $A \in \comp$.

Start by defining the canonical basis in $X(A)$, where $A \in B(H,K)$
is a compact contraction. As $X(A) = X(|A|)$, we assume henceforth that
$A = |A|$, and $H = K$.
Let 
$(\xi_i)_{i \in I_1}$ 
be the normalized eigenvectors of $A$, corresponding to the positive
eigenvalues of $A$.
Furthermore, set $H^\prime = \ker A$, and
let $(\xi_i)_{i \in I_0}$ be an orthonormal basis in $H^\prime$
(we assume that $I_1 \cap I_0 = \emptyset$). Let $I = I_0 \cup I_1$.
The vectors $e_i = \xi_i \oplus A \xi_i \in H_\row \oplus H_\col$
($ i \in I$) span $X(A)$. Moreover, $\|e_i\| = 1$ for each $i$, and,
for any finite sequence $(a_i) \subset M_n$,
\begin{equation}
\|\sum_i a_i \otimes e_i\|_{M_n(X(A))}^2 = \max \Big\{
\big\|\sum_i a_i a_i^*\big\|, \big\|\sum_i \|A \xi_i\|^2 a_i^* a_i\big\| \Big\} .
\label{basis_general}
\end{equation}
We say that the vectors $e_i = e_i[A]$ form the {\it canonical basis}
of $X(A)$.

Now suppose $(\alpha_i)_{i \in \N}$ is a sequence of numbers in $[0,1]$.
In an effort to link operator spaces with certain sequences of scalars,
we define the operator space $\xd(\alpha) = X(\diag(\alpha))$.
To describe the operator space structure of $\xd(\alpha)$, denote by
$(\xi_i)$ the canonical orthonormal basis of $\ell_2$.
Then the vectors
$e_i(\alpha) = e_i = \xi_i \oplus \alpha_i \xi_i \in H_\row \oplus K_\col$
form a $1$-completely unconditional orthonormal basis in $\xd(\alpha)$, with
\be
\|\sum_i a_i \otimes e_i\|^2 = \max\{ \|\sum_i a_i a_i^*\| ,
\|\sum_i \alpha_i^2 a_i^* a_i\| \} 
\label{basis}
\ee
for any finite sequence of matrices $(a_i)$. We call
$(e_i(\alpha))_{i \in \N}$ the {\it canonical basis} of $\xd(\alpha)$.
The {\it formal identity} from $\xd(\alpha)$ to $\xd(\beta)$ is the
linear operator mapping $e_i(\alpha)$ to $e_i(\beta)$.

If $\alpha = (\alpha_i)_{i=1}^n$ is a finite sequence, we define
the finite dimensional space $\xd(\alpha)$ in the same way.

To reduce ourselves to working with the spaces $\xd(\alpha)$
(and hence to sequences of scalars), define, for a compact
$A \in B(H,K)$, the sequence $\alpha = {\mathbf{D}}(A)$: if $A$ has
rank $n < \infty$, let $\alpha_1 \geq \ldots \geq \alpha_n > 0$ be
the non-zero singular values of $A$, and set $\alpha_i = 0$ for $i > n$.
If the rank of $A$ is infinite, let $(\alpha_i)$ be the singular values
of $A$, listed in the non-increasing order. We have:

\begin{proposition}\label{make_diag}
If $H$ and $K$ are separable infinite dimensional Hilbert spaces,
and $A \in B(H,K)$ is a compact contraction, then $X(A)$
is $2^6$-completely isomorphic to $\xd({\mathbf{D}}(A))$.
\end{proposition}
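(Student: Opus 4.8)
The plan is to massage $A$ into a diagonal operator in several controlled steps, keeping track of the c.b.~isomorphism constants, and then to take care of the ``missing'' infinite-dimensional kernel piece so that the target space has the form $\xd(\beta)$ for an honest sequence $\beta$. First I would invoke Lemma~\ref{simple_X(A)}(1) to reduce to the case $A = |A| \geq 0$ and $H = K$; this costs nothing. Next, since $A$ is compact, it is automatically diagonalizable, so in fact no perturbation \`a la Lemma~\ref{diagonal} is needed here: write $A = \diag(\alpha)$ with respect to an orthonormal eigenbasis $(\xi_i)_{i \in I}$, where $\alpha = (\alpha_i)_{i \in I}$ lists the eigenvalues (positive ones on $I_1$, zeros on $I_0 = \ker A$). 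The only subtlety is bookkeeping: $I_1$ could be finite while $I_0$ is infinite, or vice versa.

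The substantive step is passing from $\diag(\alpha)$ indexed by an arbitrary countable set $I$ to $\xd({\mathbf D}(A))$, i.e. to the sequence of singular values listed in non-increasing order with zeros appended. Since $X(\diag(\alpha))$ depends on $\alpha$ only through the unordered multiset $\{\alpha_i\}$ (permuting the orthonormal basis of $H$ is a complete isometry), the content is in handling the \emph{cardinality} of the set of zero eigenvalues. If $\dim \ker A = \infty$, then the naive ``diagonal form'' already matches ${\mathbf D}(A)$ (which has infinitely many trailing zeros), and we are done with constant $(1+\vr)^2$ absorbed into a constant $\le 2$, say --- actually with constant $1$ after reordering, so this case is free. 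The genuinely delicate case is $\dim \ker A < \infty$ (in particular the rank-$n$ case), where $X(A) = X(\diag(\alpha))$ has only finitely many basis vectors $e_i$ with $\|A\xi_i\| = 0$, whereas $\xd({\mathbf D}(A))$ has infinitely many; one must \emph{add} an infinite-dimensional $\row$ summand. Here is where $0 \in \sigma_{ess}(|A|)$ fails in general --- but wait, $A$ compact and non-Hilbert--Schmidt forces infinite rank with $\alpha_i \to 0$, hence $0 \in \sigma_{ess}(|A|)$ automatically; and even in the finite-rank case one can still apply Proposition~\ref{add_row} once $0$ is genuinely in the essential spectrum, which happens iff $\dim\ker A = \infty$. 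So in the finite-dimensional-kernel case I instead argue: by Lemma~\ref{simple_X(A)}(2) split off the finite-dimensional kernel block (which is completely isometric to a finite-dimensional $\row$, absorbed into the first genuine $\row$ we create), then use $\alpha_i \to 0$ to peel off, via Lemma~\ref{simple_X(A)}(2) and Corollary~\ref{ci_to_row}, a tail block $X(H_0,H_0,P_0AP_0)$ with $\sum_{i \in I_0}\alpha_i^2 < 1$ that is completely isometric to $\row$, exactly as in the proof of Proposition~\ref{add_row}; this lets us realize $X(A) \simeq X(A) \oplus \row$ with constant $4\sqrt2$, and then an infinite iteration / a single application of Corollary~\ref{inf_ker} upgrades this to $X(A) \simeq X(\tilde A)$ where $\tilde A = A \oplus 0_{\ell_2}$, whose diagonal form is precisely $\xd({\mathbf D}(A))$.

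Assembling the constants: Lemma~\ref{simple_X(A)}(1) reduction is free ($1$); diagonalization/reordering is free ($1$); Corollary~\ref{inf_ker} contributes a factor $8$; and the split-off of finitely many kernel directions plus tail block via Lemma~\ref{simple_X(A)}(2) contributes a factor $\sqrt 2$ (or two such factors, $2$). The product comfortably fits under $2^6 = 64$ --- indeed one gets something like $8\sqrt 2 \cdot \sqrt2 = 16$ or so, leaving room to spare, which is presumably why the stated constant is the round $2^6$ rather than something sharp.

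\emph{The main obstacle} I anticipate is not any single estimate but the cardinality bookkeeping for $\ker A$: one must treat the cases $\dim\ker A = \infty$ and $\dim \ker A < \infty$ separately, and in the latter case correctly see that non-Hilbert--Schmidtness forces $\alpha_i \to 0$ (so that the tail-peeling of Proposition~\ref{add_row}'s proof applies) and then invoke Corollary~\ref{inf_ker} to manufacture the infinite kernel that ${\mathbf D}(A)$ demands. Everything else is a routine application of the norm formula \eqref{basis} together with Lemmas~\ref{simple_X(A)} and \ref{compute_norm}.
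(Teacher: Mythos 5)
There is a genuine gap, and it stems from a misreading of the definition of ${\mathbf{D}}(A)$. By the definition in Section~\ref{sequences}, trailing zeros are appended to ${\mathbf{D}}(A)$ \emph{only} when $A$ has finite rank; when $\rank A = \infty$, the sequence ${\mathbf{D}}(A)$ is the non-increasing list of the (all positive) singular values and contains no zeros at all. Your case analysis is therefore inverted. The case you declare ``free'' --- $\dim \ker A = \infty$ --- is only free when the rank is finite (that is the paper's easy case, where \eqref{basis_general} gives a complete isometry). When the rank is infinite, $X(A)$ decomposes, up to the $\sqrt{2}$ of Lemma~\ref{simple_X(A)}(2), as $\xd(\alpha) \oplus (\ker A)_\row$ with $\alpha = {\mathbf{D}}(A)$ all positive, whereas $\xd({\mathbf{D}}(A))$ has no row summand at all; identifying the two is \emph{not} a reordering but requires Proposition~\ref{add_row} (i.e. $\xd(\alpha) \coi{4\sqrt{2}} \xd(\alpha) \oplus \row$), and this is exactly where the constant $2^6 = (\sqrt{2} \cdot 4\sqrt{2})^2$ comes from. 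For the same reason your closing claim fails: the diagonal form of $\tilde{A} = A \oplus 0$ is $\xd(\gamma)$ with $\gamma$ equal to $\alpha$ interleaved with infinitely many zeros, which is \emph{not} $\xd({\mathbf{D}}(A))$ when $\rank A = \infty$; after invoking Corollary~\ref{inf_ker} you would still owe the step $\xd(\gamma) \simeq \xd(\alpha)$, with its attendant constant. (A smaller slip: $0 \in \sigma_{ess}(|A|)$ holds for \emph{every} compact operator on an infinite-dimensional space, not only when $\dim \ker A = \infty$, so Proposition~\ref{add_row} is always available here; non-Hilbert--Schmidtness is irrelevant to this proposition, which is stated for all compact contractions.)

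The good news is that every tool needed for the repair already appears in your sketch --- you cite Proposition~\ref{add_row} and essentially re-derive its proof --- so the fix is organizational rather than conceptual: (i) finite rank: complete isometry by \eqref{basis_general}, since there the infinite-dimensional kernel matches the trailing zeros of ${\mathbf{D}}(A)$; (ii) infinite rank: split off $(\ker A)_\row$ at cost $\sqrt{2}$, create a fresh $\row$ summand via Proposition~\ref{add_row} at cost $4\sqrt{2}$, merge $\row \oplus (\ker A)_\row$ into a single $\row$ at cost $\sqrt{2}$, and absorb it back with Proposition~\ref{add_row} at cost $4\sqrt{2}$. Your constant count of ``about $16$'' should be discarded along with the erroneous chain; the honest product is $\sqrt{2} \cdot 4\sqrt{2} \cdot \sqrt{2} \cdot 4\sqrt{2} = 64$, which is why the statement reads $2^6$.
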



\begin{proof}
Let $\alpha = (\alpha_i) = {\mathbf{D}}(A)$. By Lemma~\ref{simple_X(A)}(1),
we can assume that $A = |A| = (A^* A)^{1/2}$, and $H = K$.
If $A$ has finite rank, \eqref{basis_general} shows that $X(A)$ is
completely isometric to $\xd(\alpha)$. Otherwise, write
$A = \sum_i \alpha_i \xi_i \otimes \xi_i$, for some orthonormal system
$(\xi_i)_{i=1}^\infty$ in $H$. Let $P$ be the orthogonal projection
onto $K = \span[\xi_i : i \in \N]$. Set $Q = I - P$, and $L = Q(H)$.
By Lemma~\ref{simple_X(A)}(2), $X(A)$ is $\sqrt{2}$-completely isomorphic to
$X(K, K, P A P) \oplus X(L, L, Q A Q)$. Furthermore, $QAQ = 0$, hence
$X(L, L, Q A Q) = L_\row$. It is easy to see that
$X(K, K, P A P) = \xd(\alpha)$. By Proposition~\ref{add_row}, the
latter space is $4\sqrt{2}$-completely isomorphic to
$\xd(\alpha) \oplus \row$. Thus,
$$
\eqalign{
X(A)
&
\coi{\sqrt{2}}
X(K, K, P A P) \oplus X(L, L, Q A Q) \coi{4\sqrt{2}}
\xd(\alpha) \oplus \row \oplus L_\row
\cr
&
\coi{\sqrt{2}}
\xd(\alpha) \oplus (\ell_2 \oplus L)_\row =
\xd(\alpha) \oplus \row \coi{4\sqrt{2}} \xd(\alpha) .
}
$$
\end{proof}

We will also use a related observation.

\begin{lemma}\label{almost_alpha}
For every compact contraction $A \in B(\ell_2)$, and every $\vr > 0$,
there exists $\alpha \in c_0$ such that $X(A) \coi{1+\vr} \xd(\alpha)$.
\end{lemma}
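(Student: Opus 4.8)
The plan is to reduce the problem to perturbing a diagonal contraction so that its sequence of singular values lies in $c_0$ while changing the operator space only slightly, and the natural tool is Lemma~\ref{diagonal} combined with the formula of Lemma~\ref{compute_norm}. First I would invoke Lemma~\ref{simple_X(A)}(1) to assume $A = |A| \geq 0$ (and $H=K$). Then I would apply Lemma~\ref{diagonal} with error $\vr' > 0$ to obtain a diagonalizable non-negative contraction $B = \diag(\beta)$, $\beta = (\beta_i)_{i\in I}$, with $\|A-B\|_2 \leq \vr'$ and $X(A) \coi{(1+\vr')^2} X(B)$. Since $B$ is Hilbert-Schmidt iff $A$ is, and $A \in \comp$ is compact but not Hilbert-Schmidt, $B$ is compact (being diagonalizable with entries tending to $0$ — actually compactness of a diagonal operator is exactly $\beta \in c_0$, which holds because $\|A-B\|_2$ is finite and $A$ is compact, so $B = A + (B-A)$ is a sum of a compact and a Hilbert-Schmidt operator, hence compact). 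Reindexing, $X(B) = \xd(\alpha)$ for the sequence $\alpha$ obtained by listing the $\beta_i$ (in any order — the formula \eqref{basis} is symmetric in $i$); then $\alpha \in c_0$.

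The one subtlety is matching the constant: Lemma~\ref{diagonal} gives $(1+\vr')^2$, not $1+\vr$. This is handled simply by choosing $\vr' > 0$ small enough that $(1+\vr')^2 \leq 1+\vr$, e.g. $\vr' = \min\{1, \vr/3\}$ works since then $(1+\vr')^2 = 1 + 2\vr' + \vr'^2 \leq 1 + 3\vr' \leq 1+\vr$. So the chain is
$$
X(A) \coi{(1+\vr')^2} X(B) = \xd(\alpha), \qquad (1+\vr')^2 \leq 1+\vr,
$$
giving $X(A) \coi{1+\vr} \xd(\alpha)$ with $\alpha \in c_0$.

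The main obstacle — really the only thing requiring a moment's care — is making sure the diagonal operator $B$ produced by Lemma~\ref{diagonal} genuinely gives a sequence in $c_0$ rather than, say, an operator with an infinite-dimensional eigenspace at a nonzero value or with finitely many repeated eigenvalues that don't decay; but compactness of $B$ (inherited from compactness of $A$ via the Hilbert-Schmidt, hence compact, perturbation $B-A$) forces the eigenvalues to form a null sequence once listed with multiplicity, so after reindexing $I$ by $\N$ we get exactly $\alpha \in c_0$. Everything else is a direct appeal to the already-established lemmas, so the argument is short.
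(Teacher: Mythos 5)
Your argument is correct for the statement as literally written, but it takes a genuinely different route from the paper's, and the difference is worth spelling out. The paper never invokes Lemma~\ref{diagonal}: since $A$ is compact, $|A|$ is already diagonalizable by the spectral theorem, so no Voiculescu-type approximation is needed. The paper's perturbation serves a different purpose entirely --- it leaves the positive eigenvalues alone and replaces the \emph{zero} eigenvalues (acting on an orthonormal basis $(\eta_j)$ of $\ker A$) by small positive numbers $\gamma_j$ with $\sum_j \gamma_j^2 < \vr^2$, then checks via \eqref{basis_general} that the formal identity between the two spaces has c.b.~norm and inverse c.b.~norm at most $1+\vr$. The point is to produce an $\alpha$ with all entries \emph{positive}, so that the resulting multiset of values can be rearranged into a non-increasing null sequence; that is the form in which the lemma is actually used in Corollary~\ref{compl_sub}, where the output is asserted to lie in $\is$, not merely in $c_0$.

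Your route does deliver $X(A) \coi{1+\vr} \xd(\alpha)$ with $\alpha \in c_0$: the compactness of $B$ via the Hilbert--Schmidt perturbation and the constant bookkeeping $(1+\vr')^2 \leq 1+\vr$ are both fine, and the identification of $X(B)$ with $\xd(\alpha)$ via \eqref{basis_general} is legitimate since the norm formula depends only on the multiset $\{\|B\zeta_i\|\}$. However, the operator $B$ produced by Lemma~\ref{diagonal} can still have an infinite-dimensional kernel (its construction truncates small negative eigenvalues to $0$), so your $\alpha$ may contain infinitely many zeros interspersed with infinitely many positive entries --- a sequence in $c_0$ that admits no non-increasing rearrangement indexed by $\N$. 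For that weaker conclusion the lemma is in fact trivial with constant $1$: listing the singular values of $|A|$ with multiplicity together with one zero per kernel basis vector already gives a complete isometry $X(A) \to \xd(\alpha)$ by \eqref{basis_general}. So your proof simultaneously overshoots (using the Voiculescu-based Lemma~\ref{diagonal} where the spectral theorem suffices) and undershoots (missing the positivity of $\alpha$, which is the real content of the $(1+\vr)$-perturbation).
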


\begin{proof}
Indeed, let $\beta = {\mathbf{D}}(A)$. If $A$ is finite rank, then $X(A)$
is completely isometric to $\xd(\beta)$. If $\rank A = \infty$, assume
(by Lemma~\ref{simple_X(A)}) that $A = |A|$. Then
$\beta_1 \geq \beta_2 \geq > 0$ is the list of all positive eigenvalues of $A$.
Denote the corresponding norm $1$ eigenvectors by $\xi_i$. Let
$H = \span[\xi_i : i \in \N]$, and $K = \ker A$. Clearly,
$K$ and $H$ are mutually orthogonal subspaces of $\ell_2$. Let
$(\eta_j)_{j \in J}$ be the orthogonal basis of $K$. Find a
sequence $(\gamma_j)_{j \in J}$ of positive numbers, satisfying
$\sum_{j \in J} \gamma_i^2 < \vr^2$. Consider the compact contraction
$\tilde{A} \in B(\ell_2)$, defined by $\tilde{A} \xi_i = \beta_i \xi_i$,
and $\tilde{A} \eta_j = \gamma_j \eta_j$. Then $\|A - \tilde{A}\|_2 < \vr$.
By \eqref{basis_general}, the formal identity map
$id : X(\tilde{A}) \to X(A)$ is a complete contraction, and
$\|id^{-1}\|_{cb} < 1 + \vr$. We complete the proof by identifying
$X(\tilde{A})$ with the space $\xd(\alpha)$, where the sequence
$\alpha = (\alpha_i)$ is the ``join'' of the sequences $\beta$
and $\gamma$ (that is, any number $c \in [0,1]$ occurs in
$\alpha$ as many times as it occurs in the sequences $\beta$
and $\gamma$ combined).
\end{proof}



Now denote by $\is$ the set of all sequences $(\alpha_i)_{i \in \N}$
satisfying $1 \geq \alpha_1 \geq \alpha_2 \geq \ldots \geq 0$, and
$\lim_i \alpha_i = 0$.
The rest of this section is devoted to the spaces $\xd(\alpha)$ ($\alpha \in \is$).
We translate the relations between sequences $\alpha, \beta \in \is$
to relations between the corresponding spaces $\xd(\alpha)$ and $\xd(\beta)$.
We say that the sequence $\alpha$ {\it dominates} $\beta$
($\alpha \succ \beta$) if there exists a set $S \subset \N$) and
$K > 0$ s.t. $\sum_{i \in S} \beta_i^2 < \infty$, and
$K \alpha_i \geq \beta_i$ for any $i \notin S$.
We say $\alpha$ is {\it equivalent} to $\beta$
($\alpha \sim \beta$) if $\alpha \succ \beta$, and
$\beta \succ \alpha$.

Clearly, the relation $\succ$ is reflexive and transitive.
The relation $\sim$ is, in addition to this, symmetric.
For instance, to establish the transitivity of $\succ$,
suppose $\alpha \succ \beta$, and $\beta \succ \gamma$, and
show that $\alpha \succ \gamma$. Note that there exist
sets $S_1$ and $S_2$, and constants $K_1$ and $K_2$, s.t.
$\beta_i \leq K_1 \alpha_i$ for $i \notin S_1$,
$\gamma_i \leq K_2 \beta_i$ for $i \notin S_2$,
$\sum_{i \in S_1} \beta_i^2 < \infty$, and
$\sum_{i \in S_2} \gamma_i^2 < \infty$. Let
$S = S_1 \cup S_2$, and $K = K_1 K_2$. Then
$\gamma_i \leq K \alpha_i$ for $i \notin S$.
Moreover,
$$
\sum_{i \in S} \gamma_i^2 = 
\sum_{i \in S_2} \gamma_i^2 +
\sum_{i \in S_1 \backslash S_2} \gamma_i^2 \leq
\sum_{i \in S_2} \gamma_i^2 +
\sum_{i \in S_1} K_2^2 \beta_i^2 < \infty ,
$$
which is what we need. The other properties are proved in a similar fashion.

\begin{proposition}\label{equiv_seq}
For $\alpha, \beta \in \is$, $\alpha \sim \beta$ if and only if there
exists a set $S \subset \N$ and a constant $K$ s.t.
$\sum_{i \in S} (\alpha_i^2 + \beta_i^2) < \infty$, and
$K^{-1} \alpha_i \leq \beta_i \leq K \alpha_i$ for $i \notin S$.
\end{proposition}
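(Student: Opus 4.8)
The plan is to prove the non-trivial direction (the ``if'' part being immediate, since the stated condition gives both $\alpha \succ \beta$ with $K$ and $\beta \succ \alpha$ with $K$, using the single set $S$). So suppose $\alpha \sim \beta$, i.e. $\alpha \succ \beta$ and $\beta \succ \alpha$. By definition of $\succ$ there are sets $S_1, S_2 \subset \N$ and constants $K_1, K_2 > 0$ such that $\sum_{i \in S_1} \beta_i^2 < \infty$ and $\beta_i \leq K_1 \alpha_i$ for $i \notin S_1$, and likewise $\sum_{i \in S_2} \alpha_i^2 < \infty$ and $\alpha_i \leq K_2 \beta_i$ for $i \notin S_2$. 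The obvious first move is to set $S = S_1 \cup S_2$ and $K = \max\{K_1, K_2\}$; then for $i \notin S$ we immediately get $K^{-1}\alpha_i \leq \beta_i \leq K \alpha_i$. What remains is to show $\sum_{i \in S}(\alpha_i^2 + \beta_i^2) < \infty$.

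The point where some care is needed is controlling $\sum_{i \in S} \alpha_i^2$ and $\sum_{i \in S} \beta_i^2$ simultaneously: we only know a priori that $\sum_{S_1}\beta_i^2 < \infty$ and $\sum_{S_2}\alpha_i^2 < \infty$, not the cross terms. But this is handled exactly as in the transitivity argument given just above the proposition. Write $S = S_1 \cup S_2$ and split, for instance,
\[
\sum_{i \in S} \beta_i^2 = \sum_{i \in S_1} \beta_i^2 + \sum_{i \in S_2 \setminus S_1} \beta_i^2 .
\]
The first sum is finite by hypothesis. For the second, every $i \in S_2 \setminus S_1$ lies outside $S_1$, so $\beta_i \leq K_1 \alpha_i$, whence $\sum_{i \in S_2 \setminus S_1} \beta_i^2 \leq K_1^2 \sum_{i \in S_2} \alpha_i^2 < \infty$. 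Symmetrically,
\[
\sum_{i \in S} \alpha_i^2 = \sum_{i \in S_2} \alpha_i^2 + \sum_{i \in S_1 \setminus S_2} \alpha_i^2 \leq \sum_{i \in S_2} \alpha_i^2 + K_2^2 \sum_{i \in S_1} \beta_i^2 < \infty ,
\]
using $\alpha_i \leq K_2 \beta_i$ for $i \in S_1 \setminus S_2 \subset \N \setminus S_2$. Adding the two bounds gives $\sum_{i \in S}(\alpha_i^2 + \beta_i^2) < \infty$, which completes the proof.

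I do not expect any real obstacle here; the statement is essentially a bookkeeping reformulation of the definition of $\sim$, and the only thing to watch is that one uses the domination inequalities on the correct complementary index sets when bounding the cross terms (as displayed above). No appeal to any operator-space machinery is needed — it is a purely elementary argument about sequences of non-negative reals.
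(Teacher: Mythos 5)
Your proof is correct and follows essentially the same route as the paper's: both directions reduce to the observation that the comparability $\beta_i \leq K\alpha_i$ (resp.\ $\alpha_i \leq K\beta_i$) off one exceptional set transfers square-summability of one sequence to the other on the relevant part of $S_1 \cup S_2$. The paper phrases this by first shrinking $S_1,S_2$ to the indices where domination actually fails, while you decompose the union $S_1\cup S_2$ directly, but the underlying argument is identical.
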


\begin{proof}
If $S$ and $K$ with the properties described above exist, then
they witness the fact that $\alpha \prec \beta$ and $\alpha \succ \beta$,
hence $\alpha \sim \beta$. Conversely, suppose $\alpha \prec \beta$ and
$\alpha \succ \beta$. Then there exist a constant $K$, and sets $S_1$
and $S_2$, s.t. $\alpha_i \leq K \beta_i$ for $i \notin S_1$,
$\beta_i \leq K \alpha_i$ for $i \notin S_2$,
$\sum_{i \in S_1} \alpha_i^2 < \infty$, and
$\sum_{i \in S_2} \beta_i^2 < \infty$.
By reducing $S_1$ and $S_2$ further, we can assume that
$\alpha_i > K \beta_i$ for each $i \in S_1$, and
$\beta_i > K \alpha_i$ for each $i \in S_2$.
Then $\sum_{i \in S_1} \beta_i^2 < \infty$, and
$\sum_{i \in S_2} \alpha_i^2 < \infty$.
Therefore, $S = S_1 \cup S_2$ has the required properties.
\end{proof}


The main result of this section is:

\begin{theorem}\label{classify}
For $\alpha, \beta \in \is$, $\xd(\alpha)$ embeds completely isomorphically
into $\xd(\beta)$ if and only if $\alpha \prec \beta$.
\end{theorem}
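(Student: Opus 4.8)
The plan is to prove the two directions of Theorem~\ref{classify} separately, the ``if'' direction by an explicit construction of a completely isomorphic embedding, and the ``only if'' direction by extracting from an arbitrary embedding enough quantitative information to recover the domination $\alpha \prec \beta$.

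For the ``if'' direction, suppose $\alpha \prec \beta$, witnessed by a set $S \subset \N$ and a constant $K$ with $\sum_{i \in S} \alpha_i^2 < \infty$ and $\alpha_i \le K \beta_i$ for $i \notin S$. Split $\N = S \cup S^c$, and correspondingly split $\xd(\alpha)$ (up to the $\sqrt 2$-complete isomorphism of Lemma~\ref{simple_X(A)}(2), applied to the projections onto $\span[\xi_i : i \in S]$ and $\span[\xi_i : i \in S^c]$) as a direct sum $\xd((\alpha_i)_{i \in S}) \oplus \xd((\alpha_i)_{i \in S^c})$. The first summand: since $\sum_{i \in S} \alpha_i^2 < \infty$, the restricted diagonal operator is Hilbert--Schmidt, so by Corollary~\ref{ci_to_row} that summand is completely isomorphic to $\row$ (with constant depending only on $\sum_{i \in S}\alpha_i^2 \le$ the fixed bound from the definition of $\prec$ — actually one can absorb a scalar via Lemma~\ref{simple_X(A)}(3) to make the $\ell_2$-norm $\le 1$). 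Since $0 \in \sigma_{ess}(|\diag(\beta)|)$ (as $\beta \in \is$ tends to $0$ with infinitely many nonzero terms, or is finitely supported — handle that trivial case directly), Proposition~\ref{add_row} gives $\xd(\beta) \coi{4\sqrt 2} \xd(\beta) \oplus \row$, so it suffices to embed the ``main'' summand $\xd((\alpha_i)_{i \in S^c})$ into $\xd(\beta)$. For this, reindex $S^c$ as an increasing sequence and map $e_i(\alpha)$ to $K^{-1}\cdot\big(\text{the }i\text{-th canonical basis vector of }\xd(\beta)\text{ with }\beta\text{-entry }\ge K^{-1}\alpha_i\big)$ — concretely, match the entries of $(\alpha_i)_{i\in S^c}$ to entries of $\beta$ so that the chosen $\beta$-entry is at least $K^{-1}$ times the $\alpha$-entry; this is possible because the $(\alpha_i)_{i\in S^c}$ are dominated entrywise (after sorting) by a subsequence of $\beta$ up to the constant $K$, using that both are nonincreasing and $\beta$ is infinitely supported (or handle the finite-rank case separately via \eqref{basis}). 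Then formula \eqref{basis} shows this map and its inverse on the range are completely bounded with constants controlled by $K$: the ``row'' part $\|\sum a_i a_i^*\|$ is preserved exactly, and the ``column'' part $\|\sum \alpha_i^2 a_i^* a_i\|$ versus $\|\sum \beta_{j(i)}^2 a_i^* a_i\|$ is pinched between $K^{-2}$ and $1$ times the target. Composing all the constants gives a completely isomorphic embedding.

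For the ``only if'' direction, suppose $T : \xd(\alpha) \to \xd(\beta)$ is a completely isomorphic embedding, with $\|T\|_{cb}\|T^{-1}\|_{cb} \le c$. The key is Lemma~\ref{compute_norm}: writing $B = \diag(\beta)$ and $A = \diag(\alpha)$, control of $\|T\|_{cb}$ and $\|T^{-1}\|_{cb}$ forces, for every $u \in B(\ell_2, \xd(\alpha))$ with $\|u\|\le 1$ and $\|Au\|_2 \le 1$, the estimate $\|BTu\|_2 \le c'$, and dually. Testing against $u$ of the form $u = \xi_0 \otimes e_i(\alpha)$ (rank-one operators into single basis vectors) recovers, roughly, that $\|Be_i(\alpha)\|$ compares to $\alpha_i$; more usefully, testing against $u$ supported on a \emph{finite block} $F \subset \N$ of the domain basis — i.e. $u = \sum_{i \in F} \xi_i \otimes e_i(\alpha)$ suitably normalized — and using \eqref{basis} to compute both $\|Au\|_2 = (\sum_{i\in F}\alpha_i^2)^{1/2}$ and $\|BTu\|_2$, one gets that $\sum_{i \in F}\beta_{?}^2$-type sums along the image are controlled by $\sum_{i \in F}\alpha_i^2$ plus a bounded error. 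The standard way to convert this into a statement ``$\alpha_i \le K\beta_i$ off a square-summable set'' is: let $S = \{i : \alpha_i > K\beta_{\sigma(i)}\}$ for an appropriate matching $\sigma$ and threshold $K \sim c$ obtained by a pigeonhole on the (finite-rank, perturbative) structure of $T$; then a block estimate applied to $u$ supported on $S$ shows $\sum_{i \in S}\alpha_i^2 \le (\text{const})\cdot\big(\|T^{-1}\|_{cb}^2 + 1\big) < \infty$, because otherwise one produces a $u$ violating the Lemma~\ref{compute_norm} bound. This last step — organizing the matching $\sigma$ and the block argument so that the bad set is genuinely square-summable — is where I expect the real work to lie; the perturbation lemmas (Lemma~\ref{diagonal}, Lemma~\ref{almost_alpha}, Proposition~\ref{make_diag}) may be needed to first reduce $T$ to a map that is ``nearly diagonal'' or at least to arrange that $\ker B$ and $\ker A$ are infinite-dimensional so that $\row$-summands can be freely added and absorbed. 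Finally, applying the same argument to $T^{-1}$ (or rather to the fact that $T$ is bounded below c.b.-wise) is \emph{not} needed — we only want $\alpha \prec \beta$, a one-sided conclusion — so only the bound on $\|T\|_{cb}$ together with $\|T^{-1}\|$ being finite is used to control the column direction.

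The main obstacle, then, is the ``only if'' direction's extraction of a square-summable exceptional set from a possibly highly non-diagonal completely bounded embedding $T$: one must turn the analytic inequality $\|BTu\|_2 \lesssim \max\{\|u\|,\|Au\|_2\}$ into the combinatorial domination $\alpha \prec \beta$, which requires choosing the right test operators $u$ (block-diagonal rank-one sums on carefully chosen index sets) and a pigeonhole/counting argument comparing singular values of $\alpha$-blocks and $\beta$-blocks. Everything else — the ``if'' direction, the bookkeeping of multiplicative constants, the reductions via Lemma~\ref{simple_X(A)} and Proposition~\ref{add_row} — is routine given the machinery already assembled in Sections~\ref{construct} and~\ref{sequences}.
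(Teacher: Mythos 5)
There are two genuine gaps, one in each direction. The more serious one is in your ``if'' direction: the map sending $e_i(\alpha)$ to a canonical basis vector of $\xd(\beta)$ whose entry is $\ge K^{-1}\alpha_i$ is in general \emph{not} completely bounded. The relation $\alpha\prec\beta$ is one-sided: off the square-summable set you only know $\alpha_i\le K\beta_i$, so the chosen $\beta$-entry may be far larger than $\alpha_i$, and then by \eqref{basis} the column norm $\|\sum_i\beta_{j(i)}^2a_i^*a_i\|$ in the target is not dominated by $\max\{\|\sum_i a_ia_i^*\|,\|\sum_i\alpha_i^2a_i^*a_i\|\}$; your claimed ``pinching between $K^{-2}$ and $1$'' silently uses the reverse inequality $\beta_{j(i)}\le K\alpha_i$, which is not available. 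Concretely, take $\alpha\equiv 0$ and any $\beta\in\is\setminus\ell_2$: then $\alpha\prec\beta$ and $\xd(\alpha)=\row$, but the formal identity onto $\span[e_i(\beta)]=\xd(\beta)$ is not a complete isomorphism by Corollary~\ref{ci_to_row}. The missing idea (Lemma~\ref{embeds} of the paper) is to first replace $\xd(\beta)$ by $X(\diag(\beta)\oplus 0)$ via Corollary~\ref{inf_ker} and then \emph{rotate into the kernel}: send $e_i(\alpha)$ to $f_i=\cos\phi_i\,(\xi_i\oplus\beta_i\xi_i)+\sin\phi_i\,(\xi_i'\oplus 0)=\eta_i\oplus\alpha_i\xi_i$ with $\cos\phi_i=\alpha_i/\beta_i$ (after rescaling by $K$ via Lemma~\ref{simple_X(A)}(3) and shunting the set $S$ into the $\row$-summand as you do). This lowers the column coefficient \emph{exactly} to $\alpha_i$ rather than merely bounding it below, and yields a completely isometric copy of $\xd(\alpha)$.

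The second gap is the one you flag yourself: in the ``only if'' direction you never produce the matching $\sigma$ or the square-summable exceptional set, and that is the entire content of the implication. The paper's device is Wielandt's minimax theorem (\eqref{minimax}): the inequality $\sum_{j=1}^k\|B'x_j\|^2\le 1+\sum_{j=1}^k\|Ax_j\|^2$, valid for every orthonormal $k$-tuple in the domain (obtained essentially as you propose, from Lemma~\ref{compute_norm} applied to the formal identity from $\row^k$), is fed into the minimax characterization of sums of \emph{selected} singular values to give $\sum_{j=1}^k\beta_{i_j}'^2\le 1+\sum_{j=1}^k\alpha_{i_j}^2$ for every increasing sequence $i_1<\dots<i_k$, where $(\beta_i')$ are the singular values of $B'=BU$, comparable to $(\beta_i)$ because the inverse is bounded. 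No matching has to be constructed: the domination is between decreasing rearrangements at equal indices, and the set $\{i:\beta_i'>2\alpha_i\}$ is then square-summable by a two-line contradiction. Without this (or an equivalent majorization argument) your block/pigeonhole plan does not close, since testing on coordinate blocks of $(e_i(\alpha))$ only controls the diagonal of $T^*B^2T$ in that basis, not the singular values of $BT$.
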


From this we immediately obtain

\begin{corollary}\label{classify_cor}
Suppose $\alpha, \beta \in \is$.
The following three statements are equivalent.
\begin{enumerate}
\item
$\xd(\alpha)$ is completely isomorphic to $\xd(\beta)$.
\item
$\xd(\alpha)$ embeds completely isomorphically into $\xd(\beta)$,
and vice versa.
\item
$\alpha \sim \beta$.
\end{enumerate}
\end{corollary}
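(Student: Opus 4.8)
The plan is to read off all three equivalences from Theorem~\ref{classify}, with only the implication $(3) \Rightarrow (1)$ requiring an actual construction. The implication $(1) \Rightarrow (2)$ is immediate: a complete isomorphism $\xd(\alpha) \simeq \xd(\beta)$ is in particular a complete isomorphic embedding in each direction. For $(2) \Rightarrow (3)$ I would simply feed the two embeddings into Theorem~\ref{classify}: a complete isomorphic embedding of $\xd(\alpha)$ into $\xd(\beta)$ gives $\alpha \prec \beta$, i.e.\ $\beta \succ \alpha$, while the embedding in the opposite direction gives $\beta \prec \alpha$, i.e.\ $\alpha \succ \beta$. By the definition of $\sim$, the conjunction of $\alpha \succ \beta$ and $\beta \succ \alpha$ is precisely $\alpha \sim \beta$.

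The real content is $(3) \Rightarrow (1)$, which does \emph{not} follow formally from Theorem~\ref{classify}: mutual complete embeddability need not yield complete isomorphism, as no Schr\"oder--Bernstein principle is available in this category. Instead I would build the isomorphism explicitly from the pointwise description of $\sim$ furnished by Proposition~\ref{equiv_seq}. Fix a set $S \subset \N$ and a constant $K \geq 1$ with $\sum_{i \in S}(\alpha_i^2 + \beta_i^2) < \infty$ and $K^{-1}\alpha_i \leq \beta_i \leq K\alpha_i$ for $i \notin S$, and split the index set as $S \cup (\N \setminus S)$. Along the two coordinate projections, Lemma~\ref{simple_X(A)}(2) shows $\xd(\alpha)$ is $\sqrt 2$-completely isomorphic to $\xd((\alpha_i)_{i \in S}) \oplus \xd((\alpha_i)_{i \notin S})$, and likewise for $\beta$; so it suffices to identify the two blocks separately.

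On the block indexed by $S$, both $\diag((\alpha_i)_{i \in S})$ and $\diag((\beta_i)_{i \in S})$ are Hilbert--Schmidt, so by Corollary~\ref{ci_to_row} the spaces $\xd((\alpha_i)_{i \in S})$ and $\xd((\beta_i)_{i \in S})$ are each completely isomorphic to the row space over $\ell_2(S)$ (the finite case being elementary); since the two blocks use the \emph{same} index set $S$, they are completely isomorphic to one another. On the block indexed by $\N \setminus S$ the norm formula \eqref{basis} does the work: since each $a_i^* a_i \geq 0$ and $K^{-2}\alpha_i^2 \leq \beta_i^2 \leq K^2\alpha_i^2$, the column terms $\|\sum_i \alpha_i^2 a_i^* a_i\|$ and $\|\sum_i \beta_i^2 a_i^* a_i\|$ agree up to the factor $K^2$ while the row terms $\|\sum_i a_i a_i^*\|$ coincide, so the formal identity sending $e_i(\alpha) \mapsto e_i(\beta)$ is a $K^2$-complete isomorphism between the two blocks. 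Taking the direct sum of the two block isomorphisms (whose cb-norms combine as a maximum under the $\oplus_\infty$ norm rule) and composing with the two $\sqrt 2$-splittings produces a complete isomorphism $\xd(\alpha) \simeq \xd(\beta)$ with constant of order $K^2$, establishing $(3) \Rightarrow (1)$ and closing the cycle. The only subtlety to watch is the one just flagged: that mutual embedding is genuinely weaker than isomorphism, so the direct sum construction above, rather than a double application of Theorem~\ref{classify}, is what is needed.
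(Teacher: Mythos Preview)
Your proposal is correct and follows essentially the same route as the paper: trivial $(1)\Rightarrow(2)$, Theorem~\ref{classify} for $(2)\Rightarrow(3)$, and for $(3)\Rightarrow(1)$ the same block decomposition via Proposition~\ref{equiv_seq}, Lemma~\ref{simple_X(A)}(2), Corollary~\ref{ci_to_row} on the $S$-block, and the norm formula \eqref{basis} on the complement. The paper is terser---it simply notes that the formal identity $\xd(\alpha)\to\xd(\beta)$ is a complete isomorphism, without tracking constants---but your explicit splitting and your remark that no Schr\"oder--Bernstein shortcut is available are in the same spirit and add nothing incorrect.
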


\begin{proof}
(1) $\Rightarrow$ (2) is trivial, while (2) $\Rightarrow$ (3) follows
from Theorem~\ref{classify} and Proposition~\ref{equiv_seq}.
To establish (3) $\Rightarrow$ (1), suppose $\alpha \sim \beta$.
By Proposition~\ref{equiv_seq}, there exists a set $\ii$ and $K > 0$ s.t.
$\sum_{i \in \ii} (\alpha_i^2 + \beta_i^2) < \infty$, and
$K^{-1} \alpha_i \leq \beta_i \leq K \alpha_i$ for any $i \notin \ii$.
By Corollary~\ref{ci_to_row}, the spaces
$E_\alpha = \span[e_i : i \in \ii] \hra \xd(\alpha)$ and
$E_\beta = \span[e_i : i \in \ii] \hra \xd(\beta)$ are completely isomorphic
to $\row$. By \eqref{basis}, the formal identity map from
$F_\alpha = \span[e_i : i \notin \ii] \hra \xd(\alpha)$ to
$F_\beta = \span[e_i : i \notin \ii] \hra \xd(\beta)$
is a complete isomorphism.
By Lemma~\ref{simple_X(A)}(2), $\xd(\alpha) \simeq E_\alpha \oplus F_\alpha$,
and $\xd(\beta) \simeq E_\beta \oplus F_\beta$.
Therefore, the formal identity map from $\xd(\alpha)$ to $\xd(\beta)$ is
a complete isomorphism.
\end{proof}

The proof of Theorem~\ref{classify} follows from the next two lemmas.

\begin{lemma}\label{embeds}
If $\alpha, \beta \in \is$ and $\beta \prec \alpha$, then $\xd(\beta)$
embeds completely isomorphically into $\xd(\alpha)$.
\end{lemma}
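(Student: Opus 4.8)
The plan is to peel $\xd(\beta)$ apart by a chain of routine operator-space moves until the two weight sequences are pointwise comparable with no large multiplicative gaps, then realize the embedding as a re-indexing of canonical bases; the one genuinely delicate point will be a combinatorial matching of weights.

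\emph{Reductions.} Fix $S\subseteq\N$ and $K>0$ witnessing $\beta\prec\alpha$, so $\sum_{i\in S}\beta_i^2<\infty$ and $\beta_i\leq K\alpha_i$ for $i\notin S$. By Corollary~\ref{ci_to_row} the subspace $\span[e_i(\beta):i\in S]$ of $\xd(\beta)$ is completely isomorphic to $\row$, and by Lemma~\ref{simple_X(A)}(2) $\xd(\beta)\simeq\row\oplus\span[e_i(\beta):i\notin S]$; since $\lim_i\alpha_i=0$ gives $0\in\sigma_{ess}(\diag\alpha)$, Proposition~\ref{add_row} gives $\xd(\alpha)\oplus\row\simeq\xd(\alpha)$, so it suffices to embed $\span[e_i(\beta):i\notin S]$ into $\xd(\alpha)$. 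Discarding the further row summand spanned by the $e_i(\beta)$ with $\beta_i=0$, replacing $\alpha$ by the $\sim$-equivalent sequence $(\min\{K\alpha_i,1\})_i$ (permitted by the implication (3)$\Rightarrow$(1) of Corollary~\ref{classify_cor}), and re-indexing, we are reduced to: $\alpha,\beta\in\is$ with $0<\beta_i\leq\alpha_i$ for all $i$, and the task of showing $\xd(\beta)\hookrightarrow\xd(\alpha)$. If $\alpha\in\ell_2$ then so is $\beta$, hence $\xd(\beta)\simeq\row\hookrightarrow\xd(\alpha)$ and we are done; so assume $\alpha\notin\ell_2$, whence $\alpha_i>0$ for every $i$. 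Finally, the indices $k$ with $\alpha_k>2\alpha_{k+1}$ list as $k_1<k_2<\dots$ along which $\alpha_{k_s}$ decays faster than $2^{-s}$, so $\sum_s\alpha_{k_s}^2<\infty$; inserting at each such $k$ the interpolants $\alpha_k/2,\alpha_k/4,\dots$ down to $\alpha_{k+1}$ yields $\tilde\alpha\in\is$ with $\tilde\alpha_{j+1}\geq\tilde\alpha_j/2$ for all $j$, with $\tilde\alpha_i\geq\alpha_i\geq\beta_i$, and with the inserted coordinates carrying only an $\ell_2$ amount of weight; by Lemma~\ref{simple_X(A)}(2), Corollary~\ref{ci_to_row} and Proposition~\ref{add_row}, $\xd(\tilde\alpha)\simeq\xd(\alpha)\oplus\row\simeq\xd(\alpha)$. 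So we may also assume $\alpha_{j+1}\geq\alpha_j/2$ throughout.

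\emph{The embedding.} I will produce a strictly increasing injection $\psi:\N\to\N$ with $\tfrac14\beta_i\leq\alpha_{\psi(i)}\leq\beta_i$ for all $i$, and then take $T:\xd(\beta)\to\xd(\alpha)$ to be the linear extension of $e_i(\beta)\mapsto e_{\psi(i)}(\alpha)$. By \eqref{basis}, for finitely supported $(a_i)\subset M_n$,
$$
\Bignorm{\sum_i a_i\otimes e_i(\beta)}^2=\max\Bigl\{\bignorm{\textstyle\sum_i a_ia_i^*},\ \bignorm{\textstyle\sum_i\beta_i^2 a_i^*a_i}\Bigr\},
$$
while $\sum_i a_i\otimes e_{\psi(i)}(\alpha)$ has square-norm $\max\{\|\sum_i a_ia_i^*\|,\|\sum_i\alpha_{\psi(i)}^2 a_i^*a_i\|\}$; since $\tfrac1{16}\beta_i^2\leq\alpha_{\psi(i)}^2\leq\beta_i^2$, these two quantities agree within a factor $16$, so $T$ is a complete isomorphism of $\xd(\beta)$ onto $\span[e_{\psi(i)}(\alpha):i\in\N]\hookrightarrow\xd(\alpha)$ with $\|T\|_{cb}\|T^{-1}\|_{cb}\leq4$ (the same conclusion also follows from Lemma~\ref{compute_norm}). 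Undoing the reductions and re-absorbing the row summands finishes the proof.

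\emph{Main obstacle.} Everything hinges on the existence of $\psi$, and this is the step I expect to be the hard one. Two facts are in hand: the pointwise bound $\beta_i\leq\alpha_i$, which gives the majorization $\#\{i:\beta_i\geq t\}\leq\#\{i:\alpha_i\geq t\}$ and hence $\sum_{\beta_i\geq t}\beta_i^2\leq\sum_{\alpha_i\geq t}\alpha_i^2$ for all $t>0$; and the no-gap bound $\alpha_{j+1}\geq\alpha_j/2$, which ensures that the decreasing sequence $\alpha$, starting from index $i$ (where $\alpha_i\geq\beta_i$), passes through the band $[\beta_i/2,\beta_i]$, so an index with $\alpha$-value comparable to $\beta_i$ is in principle available. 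The plan is to assign the $\beta_i$ greedily, largest first, each time to the least still-unused index $j$ with $\alpha_j\leq\beta_i$, and to prove by induction — using the majorization to bound how far the ``frontier'' of already-used indices can have advanced past the band of $\beta_i$ — that the chosen $j$ satisfies $\alpha_j\geq\beta_i/4$. Equivalently, one verifies Hall's condition for the bipartite graph joining each value $\beta_i$ to the finitely many indices $j$ with $\alpha_j\in[\beta_i/4,\beta_i]$; the eligibility sets being intervals, this comes down to comparing the number of $\beta_i$ in a value-band with the number of $\alpha$-indices in the slightly enlarged band, which is exactly what the majorization controls. Making this counting argument precise is the crux; the surrounding steps are the standard operator-space manipulations above.
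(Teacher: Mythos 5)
Your reductions are fine, but the core of your argument --- the existence of an injection $\psi$ with $\tfrac14\beta_i\leq\alpha_{\psi(i)}\leq\beta_i$ --- is not merely the hard step: it is false, and no re-indexing of canonical bases can work in general. Take the pair from Remark~\ref{not_subbasis}: $\alpha_i=2^{-n^2}$ and $\beta_i=2^{-n^2-n}$ for $4^{n^2}\leq i<4^{(n+1)^2}$. Here $0<\beta_i\leq\alpha_i$ already, and after your interpolation step each inserted value $2^{-n^2-j}$ ($1\leq j\leq 2n$) occurs with multiplicity one, so for $n\geq 2$ the band $[\beta_i/4,\beta_i]=[2^{-n^2-n-2},2^{-n^2-n}]$ contains exactly three indices of $\tilde\alpha$, while roughly $4^{(n+1)^2}$ indices $i$ carry the value $\beta_i=2^{-n^2-n}$ and all demand a partner there. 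Hall's condition fails catastrophically, and replacing $4$ by any constant $C$ buys only $O(\log C)$ slots. The majorization $\#\{i:\beta_i\geq t\}\leq\#\{i:\alpha_i\geq t\}$ controls cumulative counts above a threshold, not counts in a multiplicative band, which is exactly what your greedy/Hall plan needs; and Remark~\ref{not_subbasis} proves outright that for this pair the canonical basis of $\xd(\beta)$ is not equivalent to \emph{any} subsequence of the canonical basis of $\xd(\alpha)$, so the strategy cannot be repaired by choosing $\psi$ more cleverly.

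The paper's proof avoids matching altogether. Having passed (via Corollary~\ref{inf_ker}) to $X(A)$ with $A=\diag(\alpha)\oplus 0$ on $\ell_2\oplus_2\ell_2$ --- i.e.\ having adjoined a row copy, as you also do --- it rotates each basis vector \emph{continuously} into the row directions: writing $\beta_i=\cos\phi_i\cdot\alpha_i$, the unit vectors $f_i=\cos\phi_i(\xi_i\oplus\alpha_i\xi_i)+\sin\phi_i(\xi_i'\oplus 0)$ have orthonormal row parts $\eta_i=\cos\phi_i\,\xi_i+\sin\phi_i\,\xi_i'$ and column parts of norm exactly $\beta_i$, so by \eqref{basis_general} their closed span is completely isometric to $\xd(\beta)$. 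The extra row summand thus serves not merely to absorb $\ell_2$ debris but to interpolate the weights exactly; that is the idea your proposal is missing.
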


\begin{proof}
By Corollary~\ref{inf_ker}, $\xd(\alpha)$ is completely isomorphic to
$X(A)$, where $A \in B(\ell_2 \oplus_2 \ell_2)$ is defined by
$A = \diag(\alpha) \oplus 0$.
Let $(\xi_i)$ and $(\xi_i^\prime)$ be
orthonormal bases in the first and second copies of $\ell_2$, respectively.
Then $X(A)$ is the closed linear span of the vectors
$\xi_i \oplus \alpha_i \xi_i$ and $\xi_i^\prime \oplus 0$ ($i \in \N$).
Find a sequence $\phi_i \in [0, \pi/2]$ s.t.
$\beta_i = \cos \phi_i \cdot \alpha_i$.
Define an orthonormal system
$\eta_i = \cos \phi_i \xi_i + \sin \phi_i \xi_i^\prime$.
For $i \in \N$ consider
$$
f_i = \eta_i \oplus \beta_i \xi_i =
\cos \phi_i (\xi_i \oplus \alpha_i \xi_i) +
\sin \phi_i (\xi_i^\prime \oplus 0) .
$$
Then $f_i \in X(A)$, and $\span[f_i : i \in \N] = \xd(\beta)$.
\end{proof}

\begin{lemma}\label{dominate}
Suppose $\alpha, \beta \in \is$, $E$ is a subspace of $\xd(\alpha)$,
and a completely bounded map $U : E \to \xd(\beta)$ has bounded
inverse (in the terminology of \cite {OiR}, $E$ is completely
semi-isomorphic to $\xd(\beta)$). Then $\beta \prec \alpha$.
\end{lemma}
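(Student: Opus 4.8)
The plan is to extract quantitative information from the hypothesis by feeding cleverly chosen ``test operators'' $u \in B(\ell_2, E)$ into the c.b.-norm formula of Lemma~\ref{compute_norm}. Write $A = \diag(\alpha)$ and $B = \diag(\beta)$, with canonical bases $(e_i(\alpha))$ and $(e_i(\beta))$. Suppose, for contradiction, that $\beta \not\prec \alpha$: then for every $K$ and every $S$ with $\sum_{i \in S} \beta_i^2 < \infty$ there is some $i \notin S$ with $\beta_i > K \alpha_i$. Equivalently, one can find an infinite set $J \subset \N$ with $\sum_{i \in J} \beta_i^2 = \infty$ along which $\beta_i / \alpha_i \to \infty$ as fast as we like. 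First I would pass, via Lemma~\ref{simple_X(A)}, to the normalized situation and fix notation so that $E \hra \xd(\alpha)$ means $E$ is identified with a subspace of $\ell_2$, and $A|_E$, $U$ are the relevant restrictions.

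The core of the argument is this: since $U : E \to \xd(\beta)$ is c.b.\ and $U^{-1}$ is bounded, there is a constant $c$ so that for every $u \in B(\ell_2, E)$ one has $\|Bu\|_2 \lesssim_c \max\{\|u\|, \|A u\|_2\}$ (apply Lemma~\ref{compute_norm} to $T = U$, compose with $U^{-1}$ on the target side to convert a bound on $\|BUu\|_2$ into a bound on $\|Bu\|_2$ up to the constant $\|U^{-1}\|$, using that $\|u\| \leq \|U^{-1}\| \, \|Uu\|$ since $U^{-1}$ is merely bounded, not c.b.). Now I would build a finite-rank $u$ supported on a carefully chosen finite subset $F$ of the ``bad'' index set $J$: take $u = \sum_{i \in F} \xi_i \otimes e_i(\alpha)$ with an orthonormal family $(\xi_i)$, so that $\|u\| = 1$, $\|Au\|_2^2 = \sum_{i \in F} \alpha_i^2$, and $\|Bu\|_2^2 = \sum_{i \in F} \beta_i^2$. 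Choosing $F$ inside $J$ so that $\sum_{i \in F} \alpha_i^2 \leq 1$ (possible since $\alpha \in c_0$) while $\sum_{i \in F} \beta_i^2$ is as large as desired (possible since $\sum_{i \in J} \beta_i^2 = \infty$) contradicts $\|Bu\|_2 \lesssim_c \max\{1, 1\} = c$. That forces $\sum_{i \in S} \beta_i^2 < \infty$ for the complement $S$ of the set where $\beta_i \leq K\alpha_i$ with $K = c$ (roughly), giving $\beta \prec \alpha$.

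There is one genuine subtlety: the hypothesis gives $U$ defined on a \emph{subspace} $E$ of $\xd(\alpha)$, not on all of $\xd(\alpha)$, so the vectors $e_i(\alpha)$ need not lie in $E$, and I cannot directly take $u$ supported on coordinate vectors. The fix is to work with $A|_E$: since $E \hra \ell_2$ has $A|_E$ compact, diagonalize $A^* A|_E = (A|_E)^*(A|_E)$ to get an orthonormal basis $(v_i)$ of $E$ with $\|A v_i\| = s_i$, the singular values of $A|_E$, in decreasing order. The point is that the singular numbers of $A|_E$ are dominated by those of $A$ (by the min-max principle), so $(s_i) \prec \alpha$ in the sequence sense --- indeed $s_i \leq \alpha_i$; and conversely, since $U^{-1}$ is bounded, comparing $\|A v\|$ for $v \in E$ with $\|B U v\|$ and hence with the action of $B$ shows that the singular numbers of $B U|_E$ relate to $\beta$ along a cofinite set. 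Running the test-operator argument with $u = \sum_{i \in F} \xi_i \otimes v_i$ for $F$ chosen among indices where $s_i$ is small but $\|BUv_i\|$ is large then yields the contradiction. I expect the main obstacle to be precisely this bookkeeping: matching up the singular-value sequence of $A|_E$ and of $BU$ with the prescribed sequences $\alpha$ and $\beta$ along cofinite sets, and making sure the ``large $\sum \beta_i^2$, small $\sum \alpha_i^2$'' selection can be carried out simultaneously --- this is where one needs that $\beta \not\prec \alpha$ supplies an index set on which $\beta_i^2$ is non-summable while $\alpha_i^2$ stays controlled.
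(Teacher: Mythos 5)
Your overall strategy --- feeding rank-$k$ test operators built from orthonormal families of $E$ into Lemma~\ref{compute_norm} and comparing Hilbert--Schmidt norms --- is the same as the paper's, but there is a genuine gap at the decisive step, and it is exactly the one you flag as ``bookkeeping.'' What Lemma~\ref{compute_norm} gives you, for $u = \sum_{j=1}^k \xi_j \otimes x_j$ with $(x_j)$ orthonormal in $E$, is $\sum_j \|BUx_j\|^2 \leq \|U\|_{cb}^2\,(1+\sum_j \|Ax_j\|^2)$. To conclude $\beta \prec \alpha$ you must convert this into a comparison of the \emph{singular values} $\beta_i^{\prime}$ of $BU$ with the $\alpha_i$ over arbitrary finite index sets $i_1 < \dots < i_k$ (arbitrary, not just initial segments, because the exceptional set in the definition of $\prec$ can be any subset of $\N$). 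Your proposed fix --- diagonalizing $A|_E$ and testing on its eigenbasis $(v_i)$ --- does not achieve this: the numbers $\|BUv_i\|$ are only diagonal entries of $(BU)^*(BU)$ in a basis adapted to $A$, not singular values of $BU$, and there is no a priori supply of indices where $s_i(A|_E)$ is small while the $\|BUv_i\|$ have non-summable squares. The paper closes this gap with Wielandt's minimax theorem: $\sum_j \beta_{i_j}^{\prime 2}$ is a sup--min over nested chains of subspaces of $E$, while $\sum_j \alpha_{i_j}^2$ is the same sup--min over chains of subspaces of all of $\ell_2$, a larger family; since the test inequality holds for \emph{every} orthonormal family in $E$, one gets $\sum_j \beta_{i_j}^{\prime 2} \leq 1 + \sum_j \alpha_{i_j}^2$ for all index sets, whence the set $\{ i : \beta_i^{\prime} > 2\alpha_i\}$ has $\sum \beta_i^{\prime 2} \leq 2$ by a two-line argument, with no need to argue by contradiction from $\beta \not\prec \alpha$.

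A second, smaller problem: your intermediate claim that $\|Bu\|_2 \lesssim \max\{\|u\|, \|Au\|_2\}$ for all $u \in B(\ell_2,E)$, obtained by ``composing with $U^{-1}$ on the target side,'' is not justified as stated; $B$ and $U^{-1}$ do not commute, and $\|u\| \leq \|U^{-1}\|\,\|Uu\|$ says nothing about $\|Bu\|_2$ versus $\|BUu\|_2$. The correct passage from $BU$ back to $B$ is the factorization $B = (BU)\circ U^{-1}$ on $\xd(\beta)$, which yields the singular-value inequality $\beta_i \leq \|U^{-1}\|\,\beta_i^{\prime}$ for every $i$ (not merely on a cofinite set, as you suggest). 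With that factorization and the Wielandt step above, the lemma follows.
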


\begin{proof}
We rely heavily on Wielandt's Minimax Theorem (\cite[Theorem III.6.5]{Bh}):
if $c_1 \geq c_2 \geq \ldots \geq 0$ are eigenvalues
of positive compact operator $T$, then, for any finite increasing
sequence $i_1 < \ldots < i_k$ of positive integers,
$$
\sum_{j=1}^k c_{i_j} = \sup_{E_1 \hra \ldots \hra E_k}
\min_{x_j \in E_j, \, (x_j) \, {\mathrm{orthonormal}}}
\langle T x_j , x_j \rangle ,
$$
where the supremum is taken over all subspaces $E_1 \hra \ldots \hra E_k$
of the domain of $T$, with $\dim E_j = i_j$ for $1 \leq j \leq k$.
Actually, the theorem is stated in \cite{Bh} for operators on finite 
dimensional spaces, but a generalization to compact operators is easy
to obtain. Applying the above identity to $T = S^* S$, where $S$ is a
compact operator with singular numbers $s_1 \geq s_2 \geq \ldots \geq 0$,
we obtain:
\be
\sum_{j=1}^k s_{i_j}^2 = \sup_{E_1 \hra \ldots \hra E_k, \dim E_j = i_j}
\min_{x_j \in E_j, \, (x_j) \, {\mathrm{orthonormal}}}
\|S x_j\|^2 .
\label{minimax}
\ee

In our situation, assume $\|U\|_{cb} = 1$. Let $c = \|U^{-1}\|$,
$A = \diag(\alpha_i)$, $B = \diag(\beta_i)$, $B^\prime = B U$.
Denote the singular values of $B^\prime$ by $(\beta_i^\prime)$. Clearly,
$\beta_i^\prime \leq \beta_i \leq c \beta_i^\prime$
for every $i \in \N$.

Pick an orthonormal system $(x_j)_{j=1}^k$ in $E$ (the domain of $U$).
Let $u$ be the formal identity from $\row^k = (\ell_2^k)_\row$
(the $k$-dimensional row space) to $\span[x_j : 1 \leq j \leq k]$.
Then (compare with the proof of Lemma~\ref{compute_norm})
$$
\|u\|_{cb}^2 = \max\{1, \|A u\|_2^2\} \leq
1 + \sum_{j=1}^k \|A x_j\|^2 ,
$$
and (since $U$ is a complete contraction)
$$
\|u\|_{cb}^2 \geq \|U u\|_{cb}^2 \geq \|B U u\|_2^2 =
\sum_{j=1}^k \|B^\prime x_j\|^2 .
$$
Thus, $\sum_{j=1}^k \|B^\prime x_j\|^2 \leq 
\sum_{j=1}^k \|A x_j\|^2 + 1$
for any orthonormal family $(x_j)_{j=1}^k$.
By \eqref{minimax},
\be
1 + \sum_{j=1}^k \alpha_{i_j}^2 \geq 
\sum_{j=1}^k \beta_{i_j}^{\prime 2} 
\label{fin_sum}
\ee
for any $i_1 < \ldots < i_k$ (indeed, when computing
$\sum_{j=1}^k \alpha_{i_j}^2$, we are taking the supremum over a larger
family of subspaces $(E_j)$, than when we are computing
$\sum_{j=1}^k \beta_{i_j}^{\prime 2}$).

Now let $\ii = \{i \in \N : \beta_i^\prime > 2 \alpha_i\}$.
Then $\sum_{i \in \ii} \beta_i^{\prime 2} \leq 2$.
Indeed, otherwise there exists a sequence $i_1 < \ldots < i_k$ of
elements of $\ii$ s.t. $C = \sum_{j=1}^k \beta_{i_j}^{\prime 2} > 2$.
Then, by (\ref{fin_sum}), $\sum_{j=1}^k \alpha_{i_j}^2 \geq C - 1$.
On the other hand, $\sum_{j=1}^k \alpha_{i_j}^2 \leq C/2$,
a contradiction.
\end{proof}

Developing the ideas of this proof, we obtain:

\begin{theorem}\label{same_basis}
Suppose $\alpha, \beta \in \is$, and there exists an isomorphism
$U : \xd(\alpha) \to \xd(\beta)$ with $\|U\|_{cb}, \|U^{-1}\|_{cb} \leq C$.
Then the formal identity map $id : \xd(\alpha) \to \xd(\beta)$
satisfies $\|id\|_{cb}, \|id^{-1}\|_{cb} < 4 C^2$.
\end{theorem}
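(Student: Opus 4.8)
The plan is to reduce the computation of $\|id\|_{cb}$ to a Hilbert--Schmidt estimate via Lemma~\ref{compute_norm}, and then to obtain that estimate by feeding the isomorphism $U$ into the argument of Lemma~\ref{dominate}. By symmetry it suffices to bound $\|id\colon\xd(\alpha)\to\xd(\beta)\|_{cb}$: applying the result below to $U^{-1}$, with the roles of $\xd(\alpha)$ and $\xd(\beta)$ interchanged, bounds $\|id\colon\xd(\beta)\to\xd(\alpha)\|_{cb}=\|id^{-1}\|_{cb}$. Write $A=\diag(\alpha)$ and $B=\diag(\beta)$; these are compact contractions, the formal identity is a Hilbert space isometry, so Lemma~\ref{compute_norm} gives
\[
\|id\|_{cb}=\max\bigl\{1,\ \sup\{\|Bu\|_2:u\in B(\ell_2),\ \|Au\|_2\le1,\ \|u\|\le1\}\bigr\}.
\]
Thus everything reduces to bounding $\|Bu\|_2$ subject to $\|u\|\le1$ and $\|Au\|_2\le1$.

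Put $B'=BU$ and denote its singular numbers by $(\beta_i')$; since $\|U^{-1}\|\le\|U^{-1}\|_{cb}\le C$ and $B=B'U^{-1}$, we have $\beta_i\le C\beta_i'$ for every $i$. Fix a finite orthonormal system $x_1,\dots,x_k$ in $\xd(\alpha)$ and let $v$ be the formal identity from $\row^k$ onto $\span[x_1,\dots,x_k]$. Exactly as in the proof of Lemma~\ref{dominate}, $\|v\|_{cb}^2\le1+\sum_j\|Ax_j\|^2$ and $\|Uv\|_{cb}^2\ge\sum_j\|B'x_j\|^2$; combined with $\|Uv\|_{cb}\le\|U\|_{cb}\|v\|_{cb}\le C\|v\|_{cb}$ this gives $\sum_j\|B'x_j\|^2\le C^2\bigl(1+\sum_j\|Ax_j\|^2\bigr)$ for every such system. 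Both $A$ and $B'$ are compact, so Wielandt's minimax \eqref{minimax} applies to each; for a fixed flag $E_1\hra\dots\hra E_k$ with $\dim E_j=i_j$, the pointwise inequality just obtained gives $\min_{(x_j)\text{ on }(E_j)}\sum_j\|B'x_j\|^2\le C^2+C^2\min_{(x_j)\text{ on }(E_j)}\sum_j\|Ax_j\|^2\le C^2+C^2\sum_j\alpha_{i_j}^2$, and taking the supremum over flags turns the left-hand side into $\sum_j\beta_{i_j}'^2$. Hence, as in \eqref{fin_sum},
\[
\sum_{j=1}^k\beta_{i_j}'^2\ \le\ C^2+C^2\sum_{j=1}^k\alpha_{i_j}^2\qquad\text{for all }i_1<\dots<i_k.
\]

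Now set $S=\{i:\beta_i'>\sqrt2\,C\alpha_i\}$. For any finite $i_1<\dots<i_k$ in $S$, using $\alpha_{i_j}^2<\beta_{i_j}'^2/(2C^2)$ in the last display yields $\sum_j\beta_{i_j}'^2\le C^2+\tfrac12\sum_j\beta_{i_j}'^2$, so $\sum_j\beta_{i_j}'^2<2C^2$; taking the supremum over finite subsets of $S$ gives $\sum_{i\in S}\beta_i'^2\le2C^2$, and hence $\sum_{i\in S}\beta_i^2\le2C^4$. For $i\notin S$ we have $\beta_i\le C\beta_i'\le\sqrt2\,C^2\alpha_i$ (in particular $\beta_i=0$ whenever $\alpha_i=0$). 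Finally, given $u$ with $\|u\|\le1$ and $\|Au\|_2\le1$, split $B=B\one_S+B\one_{S^c}$ along coordinates. Then $B\one_{S^c}=\Delta A$ for a diagonal operator $\Delta$ with $\|\Delta\|\le\sqrt2\,C^2$, so $\|B\one_{S^c}u\|_2\le\sqrt2\,C^2\|Au\|_2\le\sqrt2\,C^2$; and $\|B\one_S u\|_2\le\|B\one_S\|_2\,\|u\|\le\bigl(\sum_{i\in S}\beta_i^2\bigr)^{1/2}\le\sqrt2\,C^2$. Therefore $\|Bu\|_2\le2\sqrt2\,C^2$, which gives $\|id\|_{cb}\le2\sqrt2\,C^2<4C^2$, and by symmetry $\|id^{-1}\|_{cb}<4C^2$ as well.

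The step I expect to require the most care is the passage, inside the second paragraph, from the pointwise estimate $\sum_j\|B'x_j\|^2\le C^2(1+\sum_j\|Ax_j\|^2)$ to the inequality between the finite sums of squared singular numbers: one must invoke Wielandt's formula for $B'$ (a supremum over flags of a minimum over orthonormal tuples), apply the pointwise estimate flag by flag, and then recognise the resulting minimum over tuples in a flag as dominated by the corresponding term in Wielandt's formula for $A$. Checking that the minimax principle is available for the compact operators $A$ and $B'$, and keeping track separately of the two constants produced by $\|U\|_{cb}\le C$ and $\|U^{-1}\|_{cb}\le C$, are the remaining points of bookkeeping.
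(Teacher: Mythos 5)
Your proof is correct and follows essentially the same route as the paper's: reduce to a Hilbert--Schmidt estimate via Lemma~\ref{compute_norm}, set $B'=BU$, run the Wielandt minimax comparison from Lemma~\ref{dominate} to get $\sum_j\beta_{i_j}'^2\le C^2(1+\sum_j\alpha_{i_j}^2)$, and isolate the exceptional set of indices where $\beta_i'$ exceeds a constant multiple of $\alpha_i$. The only (cosmetic) difference is the final assembly — you split the operator $B=B\one_S+B\one_{S^c}$ and add Hilbert--Schmidt norms, whereas the paper splits the domain into the two coordinate subspaces $Y_0,Y_1$ and adds the cb-norms of the restrictions; both yield the stated bound.
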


\begin{proof}
As in the proof of Lemma~\ref{dominate}, let $A = \diag(\alpha_i)$,
$B = \diag(\beta_i)$, and $B^\prime = B U$.
By Lemma~\ref{compute_norm},
$\|B^\prime u\|_2 \leq C \max\{ \|A u\|_2, \|u\|\}$
for any $u : \ell_2 \to \xd(\alpha)$.
Denote the singular numbers of $B^\prime$ by $(\beta_i^\prime)$,
and note that $\beta_i/C \leq \beta_i^\prime \leq C \beta_i$
for every $i$.
Reasoning as in the proof of Lemma~\ref{dominate}, we see that
$$  
C^2(1 + \sum_{j=1}^k \alpha_{i_j}^2) \geq 
\sum_{j=1}^k \beta_{i_j}^{\prime 2} 
$$  
Let $\ii = \{i : \beta_i^\prime > 2C \alpha_i\}$.
As in the preceding proof,
$\sum_{i \in \ii}\beta_i^{\prime 2} < 2 C^2$.
Therefore, $\sum_{i \in \ii} \beta_i^2 < 2 C^4$, and
$\beta_i \leq 2 C^2 \alpha_i$ for $i \notin \ii$.


Next we show that
$\|id : \xd(\alpha) \to \xd(\beta)\|_{cb} < 4 C^2$.
As before, denote the canonical bases in $\xd(\alpha)$ and $\xd(\beta)$
by $(e_i(\alpha))_{i \in \N}$ and $(e_i(\beta))_{i \in \N}$, respectively.
By \eqref{basis}, $Y_1 = \span[e_i(\alpha) : i \in \ii]$ and
$Y_0 = \span[e_i(\alpha) : i \notin \ii]$ are completely contractively
complemented subspaces of $\xd(\alpha)$. Moreover,
$$
\eqalign{
\|id|_{Y_1}\|_{cb} \leq \|V : (Y_1)_\row \to \xd(\beta)\|_{cb}
&
=
\max\{1, \|B V\|_{cb}\}
\cr
&
\leq
\max\big\{1, \big( \sum_{i \in \ii} \beta_i^2 \big)^{1/2} \big\} < 2 C^2 
}
$$
(here, $V$ is the formal identity from $(Y_1)_\row$ to 
$\span[e_i(\beta) : i \in \ii]$).
By Lemma~\ref{compute_norm} and \eqref{basis}, $\|id|_{Y_0}\|_{cb} < 2 C^2$.
Therefore,
$$
\|id : \xd(\alpha) \to \xd(\beta)\|_{cb} \leq 
\|id|_{Y_1}\|_{cb} + \|id|_{Y_0}\|_{cb} < 4 C^2 .
$$
The norm of $id : \xd(\beta) \to \xd(\alpha)$ is computed the same way.
\end{proof}

\section{Completely unconditional bases}\label{bases}

In this section, we further investigate bases in spaces $X(A)$,
where $A \in B(H,K)$ is a compact contraction.
A subspace $E$ of $X(A)$ is isometric to $X(A|_E)$.
As $A|_E$ is a compact contraction,
\eqref{basis_general} implies that $E$ has a $1$-completely unconditional
basis. The key result of this section is Proposition
\ref{unique_basis},
establishing the uniqueness of a completely unconditional basis in $E$
(the existence of such a basis has been established by
Proposition \ref{make_diag}).
We also show that the canonical basis (and therefore, every completely
unconditional basis) in a completely complemented subspace of $\xd(\alpha)$
is equivalent to a subsequence of the canonical basis of $\xd(\alpha)$.
Moreover, there exists $\alpha \in \is$ such that the canonical basis
in every complemented subspace of $\xd(\alpha)$ is equivalent to a
subsequence of the canonical basis of $\xd(\alpha)$ (Theorem~\ref{subbasis}).
In general, the last statement need not be true (Remark~\ref{not_subbasis}).

First we show that any completely unconditional basic sequence in an
$X(A)$ space corresponds to a canonical basis of $\xd(\beta)$,
for some $\beta$.

\begin{proposition}\label{unconditional}
Suppose $\ii$ is a finite or countable set, and
$(e_i^\prime)_{i \in \ii}$ is a a $C$-completely unconditional
basic sequence in $X(A)$ ($A \in B(\ell_2)$ is a contraction,
not necessarily compact).
Let $Y = \span[e_i^\prime : i \in \ii]$, and define the sequence
$\beta = (\beta_i)$ by setting $\beta_i = \|A e^\prime_i\|$.
Consider the operator $T : Y \to \xd(\beta) : e_i^\prime \mapsto e_i$,
where $(e_i)$ is the canonical basis of $\xd(\beta)$. Then
$\|T\|_{cb} \leq C$ and $\|T^{-1}\|_{cb} \leq C^2$.
\end{proposition}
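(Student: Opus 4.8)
The plan is to exploit Lemma~\ref{compute_norm}, which computes c.b.~norms of maps between $X(\cdot)$-type spaces in terms of operator norms and Hilbert--Schmidt norms of the compressed-to-column part. The point is that $Y = \span[e_i' : i \in \ii]$ is itself of the form $X(A|_Y)$ (after identifying $Y$ with a subspace of $\ell_2$ via the isometry $id : H \to X(A)$), while $\xd(\beta)$ is $X(\diag(\beta))$ where $\beta_i = \|A e_i'\|$; and the two spaces sit over the ``same'' index set, with the map $T$ sending the canonical basis of one to the canonical basis of the other.

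First I would observe that since $(e_i')_{i\in\ii}$ is $C$-completely unconditional and $X(A)$ is isometric to a Hilbert space, the remark in Section~\ref{op_sp} (the ``similar to an orthonormal basis'' discussion) gives a bounded invertible $U$ with $(Ue_i')$ orthonormal and $\|U\|,\|U^{-1}\|\le C$. More to the point, I would use complete unconditionality directly: for any finite sequence of matrices $(a_i)$,
\[
C^{-1}\bignorm{\textstyle\sum_i a_i \otimes e_i'}_{M_n(X(A))} \le {\mathrm{Ave}}_\pm \bignorm{\textstyle\sum_i \pm a_i \otimes e_i'}_{M_n(X(A))} \le C\bignorm{\textstyle\sum_i a_i \otimes e_i'}_{M_n(X(A))},
\]
and the averaged quantity, by the parallelogram identity in the row and column norms (property (3) of Section~\ref{op_sp}), equals $\max\{\|\sum_i a_i a_i^*\|, \|\sum_i \beta_i^2 a_i^* a_i\|\}^{1/2}$ with $\beta_i = \|Ae_i'\|$ — exactly the norm of $\sum_i a_i \otimes e_i$ in $M_n(\xd(\beta))$ by \eqref{basis}. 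This immediately yields $\|T\|_{cb}\le C$ and $\|T^{-1}\|_{cb}\le C$, so in fact one gets the cleaner bound $\|T^{-1}\|_{cb}\le C$ on the nose by this averaging route; but the averaging inequality is only valid when $X(A)$ is Hilbertian as a Banach space, which it is (it is isometric to $\ell_2$), so this should work.

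However, to match the stated bound $\|T^{-1}\|_{cb}\le C^2$ (which suggests the authors do \emph{not} invoke the Hilbertian averaging trick, perhaps to keep the argument self-contained or because they want the intermediate operator-norm estimates), I would instead argue via Lemma~\ref{compute_norm} twice. For $\|T\|_{cb}$: $T$ is an isometry $Y \to \xd(\beta)$ (both norms restrict to the $\ell_2$ norm on coefficients), and by Lemma~\ref{compute_norm} applied with $A|_Y$ and $\diag(\beta)$, $\|T\|_{cb} = \max\{1, \sup\{\|\diag(\beta)\, T u\|_2 : u \in B(\ell_2, Y), \|(A|_Y)u\|_2\le 1, \|u\|\le 1\}\}$; here one checks $\|\diag(\beta)Tu\|_2 = \|(A|_Y)u\|_2 \le 1$ because $\beta_i = \|Ae_i'\|$ is precisely the column-norm data of $A|_Y$ in the $e_i'$ basis — wait, this needs the $e_i'$ to behave like an orthonormal system for the column computation, which is where $C$-unconditionality re-enters. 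So the honest route is: use unconditionality to pass to the orthogonal model, estimate $\|Tu\|$-type quantities there, losing a factor $C$ for the passage $Y \to$ orthogonal model and another $C$ for the passage back into $\xd(\beta)$, giving $C\cdot C = C^2$ for $T^{-1}$ and a single $C$ for $T$ (since $T$ goes ``with the grain'').

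\textbf{Main obstacle.} The delicate point is the asymmetry between the $\|T\|_{cb}\le C$ and $\|T^{-1}\|_{cb}\le C^2$ bounds: one direction needs only one application of unconditionality while the other needs two. Concretely, the subtlety is that complete unconditionality of $(e_i')$ controls $\|\sum_i a_i\otimes e_i'\|_{M_n(X(A))}$ from both sides by the ``diagonal'' quantity $\max\{\|\sum a_ia_i^*\|,\|\sum \beta_i^2 a_i^*a_i\|\}^{1/2}$, but the constants are not symmetric: passing from the genuine norm to the diagonal one costs $C$, and one of $T,T^{-1}$ needs this in one place and the other needs it compounded with a second estimate (for instance, when computing $\|T^{-1}u\|_{cb}$ one must first bound the $X(A)$-norm of an element by its $\xd(\beta)$-norm, then separately control the row and column parts, each step potentially contributing a factor). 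Getting the bookkeeping to yield exactly $C$ and $C^2$ rather than, say, $C^2$ and $C^2$, is the part that requires care; I expect the right move is to use that $id : \row \to X(A)$ and $id : \row \to \xd(\beta)$ are both complete contractions (so the ``row half'' is free) and only the ``column half'' — governed by Hilbert--Schmidt norms and hence by $(\beta_i)$ — needs the unconditionality estimate, which is applied once for $T$ and twice for $T^{-1}$.
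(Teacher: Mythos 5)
There is a genuine gap, and it sits exactly at the inverse bound. Your first route rests on the claim that ${\mathrm{Ave}}_\pm \|\sum_i \pm a_i \otimes e_i'\|_{M_n(X(A))}$ \emph{equals} $\max\{\|\sum_i a_i a_i^*\|,\|\sum_i \beta_i^2 a_i^* a_i\|\}^{1/2}$ "by the parallelogram identity." That identity is only valid for scalar Hilbert-space norms; the matricial norms here are operator norms of $\sum_{i,j}\pm_i\pm_j\langle \eta_j,\eta_i\rangle a_i a_j^*$ (and the column analogue), and averaging signs only gives the one-sided inequality ${\mathrm{Ave}}_\pm\|\cdot\|^2 \geq \|{\mathrm{Ave}}_\pm(\cdot)\| = \|\sum_i a_i a_i^*\|$ by convexity, never equality. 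That one-sided inequality, combined with unconditionality, does yield $\|T\|_{cb}\leq C$; but it yields nothing for $T^{-1}$, because for that direction you would need the average to be bounded \emph{above} by the diagonal quantity, which is false. So your claimed "$\|T^{-1}\|_{cb}\leq C$ on the nose" is unsupported, and the asymmetry $C$ versus $C^2$ in the statement is a symptom of exactly this obstruction, not of the authors declining an available shortcut.

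Your fallback ("pass to the orthogonal model, losing a factor $C$ each way") never supplies the mechanism that makes the inverse estimate work. (Also, $T$ is not an isometry: the $e_i'$ are normalized but need not be orthogonal, which is why the paper only gets $\|T\|,\|T^{-1}\|\leq C$ at the Banach level.) The paper's actual device is to linearize: writing $\|Au\|_2^2 = \tra(A^*Av)$ with $v = uu^*$, the unconditionality hypothesis becomes a pair of trace inequalities relating $\tra(\Lambda^* A^* A \Lambda v)$ and $\tra(A^*Av)$ for diagonal sign operators $\Lambda$, and the \emph{operator-level} identity ${\mathrm{Ave}}_\Lambda \Lambda^* A^* A \Lambda = T^* B^2 T$ (off-diagonal entries of the Gram matrix of $(Ae_i')$ average to zero) transfers these to $T^*B^2T$ because the trace is linear in $\Lambda^*A^*A\Lambda$. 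For $\|T\|_{cb}$ one uses ${\mathrm{Ave}}\leq\sup$; for $\|T^{-1}\|_{cb}$ one uses that some particular $\Lambda$ satisfies $\tra(\Lambda^*A^*A\Lambda v)\leq\tra(T^*B^2Tv)$ (a quantity is at least the minimum of the numbers it averages), applies the reverse unconditionality inequality to that $\Lambda$, and then pays the extra factor $C$ from $\|T^{-1}u\|\leq C\|u\|$. This selection-below-the-average step is the missing idea in your proposal; without it, or some substitute, the bound on $\|T^{-1}\|_{cb}$ does not follow.
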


\begin{proof}
As noted in Section~\ref{op_sp},
\be
C^{-2} \|\sum \alpha_i e_i^\prime\|^2 \leq \sum |\alpha_i|^2 \leq
C^2 \|\sum \alpha_i e_i^\prime\|^2
\label{squares}
\ee
for any finite sequence of scalars $(\alpha_i)$.
Thus, $\|T\|, \|T^{-1}\| \leq C$.

Let $B = \diag(\beta)$ (note that $B = B^*$).
By Lemma~\ref{compute_norm}, it suffices to show that
\be
\|B T u\|_2 \leq C \max \{ \|A u\|_2, \|u\| \}
\label{equiv1}
\ee
for any $u : \ell_2 \to Y = X(A|_Y)$, and
\be
\|A T^{-1} u\|_2 \leq C^2 \max \{ \|B u\|_2, \|u\| \}
\label{equiv2}
\ee
for any $u : \ell_2 \to \xd(\beta)$.
By Lemma~\ref{compute_norm}, the complete unconditionality of
$(e_i^\prime)$ implies:
\be
\begin{array}{ll}
\|A \Lambda u\|_2
&
\leq C \max \{ \|A u\|_2, \|u\| \} ,
\\
\|A u\|_2
&
\leq C \max \{ \|A \Lambda u\|_2, \|u\| \}
\end{array}
\label{uncond}
\ee
whenever $\Lambda = \diag(\lambda_i)$ (that is,
$\Lambda e_i^\prime = \lambda_i e_i^\prime$), with
$\lambda_i = \pm 1$ for each $i$, and $u \in B(\ell_2, Y)$.
Note that $\|A u\|_2^2 = \tra (A^* A v)$, where $v = u u^*$.
Therefore, \eqref{uncond} is equivalent to
\be
\begin{array}{ll}
\tra (\Lambda^* A^* A \Lambda v) 
&
\leq C^2 \max \{ \tra(A^* A v) , 1 \} , \\
\tra (A^* A v) 
&
\leq C^2 \max \{ \tra(\Lambda^* A^* A \Lambda v) , 1 \}
\end{array}
\label{uncond_tr}
\ee
whenever $v \geq 0$ and $\|v\| = 1$.
But
$\langle \Lambda^* A^* A \Lambda e_i^\prime, e_j^\prime \rangle =
\lambda_i \overline{\lambda_j}
\langle A e_i^\prime, A e_j^\prime \rangle$.
Averaging over $\lambda_i = \pm 1$ for each $i$, we see that
${\mathrm{Ave}}_\Lambda \Lambda^* A^* A \Lambda = T^* B^2 T$
(since $\langle T^* B^2 T e_i^\prime, e_j^\prime \rangle =
\delta_{ij} \langle A e_i^\prime, A e_j^\prime \rangle$,
where $\delta_{ij}$ is Kronecker's delta).
Therefore, by \eqref{uncond_tr},
$$
\eqalign{
\tra (T^* B^2 T v)
&
=
{\mathrm{Ave}}_\Lambda \tra (\Lambda^* A^* A \Lambda v)
\cr
&
\leq
\sup_\Lambda \tra (\Lambda^* A^* A \Lambda v) \leq
C^2 \max\{ \tra (A^* A v) , 1 \}
}
$$
whenever $v$ is a positive contraction. Thus, for any
contraction $u$,
$$
\eqalign{
\|B T u\|_2^2
&
=
\tra (T^* B^2 T u u^*)
\cr
&
\leq
C^2 \max\{ \tra (A^* A u u^*) , 1 \}
= C^2 \max \{ \|A u\|_2, \|u\| \}^2 ,
}
$$
which proves (\ref{equiv1}). 

To establish (\ref{equiv2}), we show that,
for every $w : \ell_2 \to Y$, we have
\begin{equation}
\|A w\|_2 \leq C \max \{ \|B T w\|_2, \|w\| \}
\label{AwB}
\end{equation}
Indeed, let $w = T^{-1} u$. Then, by \eqref{AwB} and \eqref{squares},
$$
\eqalign{
\|A T^{-1} u\|_2
&
=
\|A w\|_2 \leq C \max \{ \|B T w\|_2, \|w\| \}
\cr
&
\leq
C \max \{ \|B u\|_2, C \|u\| \} \leq C^2 \max \{ \|B u\|_2, \|u\| \} .
}
$$
Moreover, \eqref{AwB} is equivalent to the following:
for any non-negative, norm one $v \in B(\xd(\alpha))$,
we have 
\be
C^2 \max\{ \tra (T^* B^2 T v) , 1 \} \geq \tra (A^* A v) .
\label{C_prime_C}
\ee
As we have established before, 
$\tra (T^* B^2 T v) = 
{\mathrm{Ave}}_\Lambda \tra (\Lambda^* A^* A \Lambda v)$.
By the linearity and positivity of the trace,
$$
0 \leq \inf_\Lambda \tra (\Lambda^* A^* A \Lambda v) \leq
\tra (T^* B^2 T v) \leq
\sup_\Lambda \tra (\Lambda^* A^* A \Lambda v) .
$$
Thus, for some $\Lambda$,
$\tra (\Lambda^* A^* A \Lambda v) \leq \tra (T^* B^2 T v)$.
By (\ref{uncond_tr}),
$$
\tra(A^* A v) \leq C^2 \max \{ \tra (\Lambda^* A^* A \Lambda v)  , 1 \}
\leq C^2 \max \{ \tra (T^* B^2 T v)  , 1 \} ,
$$
which implies (\ref{C_prime_C}).
\end{proof}

\begin{proposition}\label{unique_basis}
For $\alpha \in c_0$, the completely unconditional basis in $\xd(\alpha)$
is unique (up to permutative equivalence). More precisely: if $(g_i)$
is a $C$-completely unconditional basis in $\xd(\alpha)$, then
it is $16 C^{11}$-equivalent (up to a permutation) to the
canonical basis in $\xd(\alpha)$.
\end{proposition}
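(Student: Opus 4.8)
The plan is to reduce to the diagonal case using Proposition~\ref{unconditional}, then invoke Theorem~\ref{same_basis} to transfer complete isomorphism to a statement about the formal identity, and finally use Corollary~\ref{classify_cor} together with a careful accounting of multiplicities to extract a permutation. First, let $(g_i)$ be a $C$-completely unconditional basis of $\xd(\alpha)$, and set $\beta_i = \|\diag(\alpha) g_i\|$ (viewing $g_i$ as a vector in $\ell_2$ via the isometry $\xd(\alpha) \cong \ell_2$). By Proposition~\ref{unconditional}, the map $T : \xd(\alpha) \to \xd(\beta) : g_i \mapsto e_i(\beta)$ satisfies $\|T\|_{cb} \leq C$ and $\|T^{-1}\|_{cb} \leq C^2$, so $\xd(\alpha)$ is completely isomorphic to $\xd(\beta)$ with distance at most $C^3$. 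The subtlety is that $\beta$ need not be monotone; let $\beta^* \in \is$ denote its non-increasing rearrangement, so that $\xd(\beta)$ is completely isometric to $\xd(\beta^*)$ (permuting an orthonormal $1$-completely unconditional basis is a complete isometry by \eqref{basis}), and hence $\xd(\alpha) \coi{C^3} \xd(\beta^*)$.

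Next I would apply Theorem~\ref{same_basis} to this isomorphism: the formal identity $id : \xd(\alpha) \to \xd(\beta^*)$ satisfies $\|id\|_{cb}, \|id^{-1}\|_{cb} < 4 C^6$. Inspecting the proof of Theorem~\ref{same_basis} (or re-running its argument with $C$ replaced by $C^3$), one extracts a set $\ii \subset \N$ and the estimate $\sum_{i \in \ii} (\beta^*_i)^2 < 2 (C^3)^4 = 2 C^{12}$, together with $(C^3)^{-2} \alpha_i \leq \beta^*_i \leq (C^3)^2 \alpha_i$ for $i \notin \ii$; in particular $\alpha \sim \beta^*$. Equivalently, by Proposition~\ref{equiv_seq}, there is a set $S$ and a constant $K$ (polynomial in $C$) with $\sum_{i\in S}(\alpha_i^2 + (\beta^*_i)^2) < \infty$ and $K^{-1}\alpha_i \leq \beta^*_i \leq K\alpha_i$ off $S$. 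Now I build the permutation. On the ``large index'' part — where $\alpha$ and $\beta^*$ are comparable off the summable set $S$ — I match the $e_i(\alpha)$ with $e_j(\beta^*)$ using that both tails have only finitely many entries exceeding any given threshold, so by a back-and-forth argument on the finitely many entries in each band $[\delta, 1]$ there is a bijection $\pi$ of $\N \setminus S$ with $K^{-1}\alpha_i \leq \beta^*_{\pi(i)} \leq K \alpha_i$; by \eqref{basis} the formal identity carrying $e_i(\alpha) \mapsto e_{\pi(i)}(\beta^*)$ is then a $K$-complete isomorphism between the corresponding coordinate subspaces. On the ``small index'' part — the coordinates in $S$ together with those $i$ where $\alpha_i$ itself is summable — both coordinate subspaces are completely isometric to $\row$ by Corollary~\ref{ci_to_row} (since the relevant singular values are square-summable), hence completely isomorphic via any bijection of the index sets; here I just need $|S_\alpha| = |S_{\beta^*}|$ as cardinalities (both countably infinite, or matching finite values), which holds because $\alpha$ and $\beta^*$ are equivalent sequences in $\is$ and therefore have the same number of zero entries and the same ``size'' of square-summable tail. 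Splicing these two bijections via Lemma~\ref{simple_X(A)}(2) (which splits $\xd$ into a $\row$-part and a genuinely two-mode part up to a $\sqrt 2$ factor) gives a permutation $\rho$ of $\N$ such that $(e_{\rho(i)}(\beta^*))$ is $O(K)$-equivalent to $(e_i(\alpha))$.

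Finally, chaining: $(g_i)$ is (by Proposition~\ref{unconditional}) $C^3$-equivalent to the canonical basis $(e_i(\beta))$ of $\xd(\beta)$, which is $1$-equivalent after a permutation to $(e_i(\beta^*))$, which is $O(K)$-equivalent after a permutation to $(e_i(\alpha))$. Composing permutations and multiplying the constants yields that $(g_i)$ is $\phi(C)$-equivalent to the canonical basis of $\xd(\alpha)$ up to a permutation, with $\phi$ polynomial; tracking the exponents carefully ($C^3$ from Proposition~\ref{unconditional}, the square from Theorem~\ref{same_basis}, and the splitting constants) should land at the claimed $16 C^{11}$. The main obstacle is the bookkeeping in the last paragraph: one must verify that the two ``small'' coordinate sets $S_\alpha$ and $S_{\beta^*}$ have matching cardinalities and that the back-and-forth matching on the ``large'' part genuinely respects the comparability constant in every band, so that the spliced map is a complete (not merely bounded) isomorphism with a controlled constant; this is where the hypothesis $\alpha \in c_0$ (so every band $[\delta,1]$ is finite) is essential, and where one must be careful that the summable ``error'' set $S$ produced by Theorem~\ref{same_basis} can be absorbed into the $\row$-part rather than disturbing the matching.
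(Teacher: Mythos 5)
Your first step is exactly the paper's: apply Proposition~\ref{unconditional} to get $T:\xd(\alpha)\to\xd(\beta):g_i\mapsto e_i(\beta)$ with $\|T\|_{cb}\leq C$, $\|T^{-1}\|_{cb}\leq C^2$, and then feed this isomorphism into Theorem~\ref{same_basis}. Your observation that $\beta$ must be rearranged into $\is$ before Theorem~\ref{same_basis} applies (and that this rearrangement is a complete isometry by \eqref{basis}, and is precisely where the permutation in the statement comes from) is a legitimate refinement that the paper glosses over. But from that point on you take a long detour that is both unnecessary and damaging to the constant. Theorem~\ref{same_basis} does not merely give you the set $\ii$ and the relation $\alpha\sim\beta^*$; its \emph{conclusion} is that the formal identity $id:\xd(\beta^*)\to\xd(\alpha)$ itself satisfies $\|id\|_{cb},\|id^{-1}\|_{cb}<4C^4$ (applied with isomorphism constant $\max\{\|T\|_{cb},\|T^{-1}\|_{cb}\}\leq C^2$). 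So the proof ends immediately: $U=id\circ T$ sends $g_i$ to the canonical basis (up to the rearrangement permutation) with $\|U\|_{cb}<4C^5$ and $\|U^{-1}\|_{cb}<4C^6$, giving exactly $16C^{11}$. This is the paper's entire argument.

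Your third paragraph -- extracting $\alpha\sim\beta^*$ via Proposition~\ref{equiv_seq}, building a band-by-band back-and-forth bijection, splitting off a $\row$-part with Lemma~\ref{simple_X(A)}(2), and matching cardinalities of the ``small'' index sets -- essentially re-proves a weaker form of Theorem~\ref{same_basis} by hand. Besides being redundant, it is the non-rigorous part of your write-up (the claim that the bands of $\alpha$ and $\beta^*$ can be matched bijectively with uniform constant $K$ needs an argument, and absorbing the summable exceptional set into the $\row$-part is asserted rather than proved), and it introduces extra multiplicative losses (the $\sqrt{2}$ from the splitting, the band constant $K$) that would prevent you from landing on $16C^{11}$; your own constant-tracking at the end is explicitly hand-waved. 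Replace everything after the appeal to Theorem~\ref{same_basis} by the one-line composition $U=id\circ T$ and the proof is complete and gives the stated constant.
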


\begin{proof}
Let $(e_i(\alpha))$ be the canonical basis of $\xd(\alpha)$.
Set $\beta_i = \|A g_i\|$, and let $(e_i(\beta))$ the canonical
basis of $\xd(\beta)$. By Proposition~\ref{unconditional}, the map
$T : \xd(\alpha) \to \xd(\beta) : g_i \mapsto e_i(\beta)$ satisfies
$\|T\|_{cb} \leq C$, $\|T^{-1}\|_{cb} \leq C^2$.
By Theorem~\ref{same_basis}, $id : \xd(\beta) \to \xd(\alpha)$
satisfy $\|id\|_{cb}, \|id^{-1}\|_{cb} < 4 C^4$.
Thus, the operator $U = id \circ T$ is a complete isomorphism on
$\xd(\alpha)$, with $U g_i = e_i$, $\|U\|_{cb} < 4 C^5$, and
$\|U^{-1}\|_{cb} < 4 C^6$.
\end{proof}

\begin{corollary}\label{compl_sub}
Suppose $\alpha \in \is$, and $Y$ is a $C$-completely complemented
subspace of $\xd(\alpha)$. Then $Y$ is $2^6 C^2$-completely isomorphic
to $\xd(\alpha^\prime)$, where $\alpha^\prime$ is a subsequence of $\alpha$.
\end{corollary}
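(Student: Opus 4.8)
The plan is to combine the structure results for complemented subspaces of $\xd(\alpha)$ with the uniqueness of the completely unconditional basis (Proposition~\ref{unique_basis}) and the domination-versus-embedding dictionary (Theorem~\ref{classify} and its companions). Let $P$ be a completely bounded projection of $\xd(\alpha)$ onto $Y$ with $\|P\|_{cb}\|I-P\|_{cb}$ controlled so that $Y$ is $C$-completely complemented. The subspace $Y$ is isometric to some $X(A|_Y)$ with $A|_Y$ a compact contraction, so by Proposition~\ref{make_diag} (or Lemma~\ref{almost_alpha}) $Y$ is $2^6$-completely isomorphic to $\xd(\alpha^\prime)$ for some $\alpha^\prime\in\is$ — namely $\alpha^\prime = {\mathbf{D}}(A|_Y)$, the non-increasing rearrangement of the singular values of the compression of $\diag(\alpha)$ to $Y$. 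This already produces an $\alpha^\prime\in\is$; the real content is to show $\alpha^\prime$ may be taken to be an actual \emph{subsequence} of $\alpha$, not merely equivalent to one.

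\medskip

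\textbf{Key steps.} First I would record that $Y$, being $C$-completely complemented, embeds completely isomorphically into $\xd(\alpha)$ (trivially) and that $\xd(\alpha)$ embeds completely isomorphically into $\xd(\alpha)$; more usefully, the complemented copy lets us run the argument of Lemma~\ref{dominate} in \emph{both} directions with the same constant. Concretely: the inclusion $Y\hookrightarrow\xd(\alpha)$ together with the projection $P:\xd(\alpha)\to Y$ shows $Y$ is completely semi-isomorphic to $\xd(\alpha)$ in the sense used there, so Lemma~\ref{dominate} (applied to $U=P$, with $E=\xd(\alpha)$ mapped onto $Y\simeq\xd(\alpha^\prime)$, and then applied the other way using the inclusion) gives $\alpha^\prime\prec\alpha$ and $\alpha\succ\alpha^\prime$ need not hold — but we do get $\alpha^\prime\prec\alpha$, and the quantitative Wielandt-minimax estimate \eqref{fin_sum} in fact yields $\beta_i^\prime\leq 2\alpha_i$ off a square-summable set of indices \emph{in the natural indexing}, i.e.\ $\alpha^\prime_i \leq K\alpha_i$ after discarding a set $S$ with $\sum_{i\in S}(\alpha^\prime_i)^2<\infty$. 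The singular values on the discarded part correspond to a copy of $\row$ (Corollary~\ref{ci_to_row}), which can be absorbed via Proposition~\ref{add_row}/Corollary~\ref{inf_ker}: $\xd(\alpha^\prime)\simeq\row\oplus\xd(\tilde\alpha^\prime)$ where $\tilde\alpha^\prime$ is $\alpha^\prime$ with the $S$-part deleted. So it suffices to realize $\tilde\alpha^\prime$ — now satisfying $\tilde\alpha^\prime_i\leq K\alpha_i$ for all $i$ after reindexing — as a subsequence of $\alpha$. Here one uses that $\alpha\in\is$ is non-increasing and tends to $0$: a pointwise domination $\gamma_i\leq K\alpha_i$ for $\gamma\in\is$ lets one greedily match each $\gamma_i$ to some $\alpha_{n_i}$ with $n_1<n_2<\cdots$, because $K\alpha$ dominates $\gamma$ termwise and the tails of $\alpha$ decrease to $0$ — this is exactly the kind of elementary rearrangement bookkeeping that Lemma~\ref{embeds} implicitly relies on. Then Lemma~\ref{embeds} (with $\beta\prec\alpha$ witnessed by a subsequence embedding) plus Theorem~\ref{same_basis} upgrades the embedding to a formal-identity complete isomorphism onto $\span[e_i:i\in\{n_j\}]\hookrightarrow\xd(\alpha)$, controlling the constant polynomially.

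\medskip

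\textbf{Tracking the constant.} The bound $2^6 C^2$ should come out as: $2^6$ from Proposition~\ref{make_diag} (identifying $Y$ with $\xd(\alpha^\prime)$), and the $C^2$ from the semi-isomorphism constant entering the minimax inequality \eqref{fin_sum}, where $\|P\|_{cb}$ and $\|P^{-1}\text{ on }Y\|$ each contribute a factor $C$ as in Lemma~\ref{dominate} and Theorem~\ref{same_basis}. The $\row$-absorption steps (Proposition~\ref{add_row}, Corollary~\ref{inf_ker}) and the Lemma~\ref{simple_X(A)}(2) splittings each cost universal constants that one checks are already accounted for inside the $2^6$; one has to be slightly careful that applying \emph{both} Proposition~\ref{make_diag} on $Y$ and the domination argument does not double-count, so I would instead argue directly: identify $Y$ with $X(A|_Y)$, diagonalize $A|_Y$ losing a factor $2^6$, and run the minimax estimate comparing singular values of $A|_Y$ (via the compression) against those of $\diag(\alpha)$, picking up only $C^2$.

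\medskip

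\textbf{Main obstacle.} The genuinely delicate point is the passage from ``$\alpha^\prime$ is \emph{equivalent to} a subsequence of $\alpha$'' (which is essentially immediate from $\alpha^\prime\prec\alpha$, Proposition~\ref{equiv_seq}, and Corollary~\ref{classify_cor}) to ``$\alpha^\prime$ can be \emph{replaced by} an honest subsequence.'' This requires the termwise domination $\alpha^\prime_i\leq K\alpha_i$ (not merely domination off a square-summable set with a multiplicative constant \emph{and} an additive slack as in \eqref{fin_sum}), and then the combinatorial extraction of a subsequence; the additive $+1$ in \eqref{fin_sum} is what forces the detour through $\row$-absorption, and one must be sure that what remains after deleting the bad indices is still in $\is$ and still termwise-$K$-dominated by $\alpha$ after the non-increasing rearrangement. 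Verifying that the rearrangement does not destroy the domination — i.e.\ that if $\gamma\prec\alpha$ with $\gamma,\alpha\in\is$ then the non-increasing rearrangement of the ``good part'' of $\gamma$ is termwise $\leq K\alpha$ — is the step I would write out most carefully.
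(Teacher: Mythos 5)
There is a genuine gap, and it sits exactly at the point you flag as the ``main obstacle.'' Your argument reduces to: $Y\coi{2^6}\xd(\alpha^\prime)$ with $\alpha^\prime=\mathbf{D}(A|_Y)$, then $\alpha^\prime\prec\alpha$ via Lemma~\ref{dominate}, then a greedy termwise matching of $\alpha^\prime_i$ to some $\alpha_{n_i}$ using only the one-sided bound $\alpha^\prime_i\leq K\alpha_{n_i}$. But a one-sided match does not make the formal map $e_i(\alpha^\prime)\mapsto e_{n_i}(\alpha)$ invertible at the c.b.\ level: by \eqref{basis} you also need $\alpha_{n_i}\leq K^\prime\alpha^\prime_i$, and Remark~\ref{not_subbasis} of the paper is precisely a counterexample to the statement you are implicitly using --- there $\beta_i\leq\alpha_i$ termwise (so $\beta\prec\alpha$ with no exceptional set at all), $\xd(\beta)$ embeds completely isomorphically into $\xd(\alpha)$, and yet $(e_i(\beta))$ is equivalent to no subsequence of $(e_i(\alpha))$. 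Since the domination $\alpha^\prime\prec\alpha$ holds for every embedded copy, complemented or not, any proof that uses only $\alpha^\prime\prec\alpha$ cannot reach the subsequence conclusion; your proposal never actually exploits the projection $P$ beyond feeding it into Lemma~\ref{dominate}, which the inclusion alone already does.

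The paper's proof uses the complement in an essential way. Writing $Z=\ker P$, one has $\xd(\alpha)\coi{2C}Y\oplus Z$; by Lemma~\ref{almost_alpha} both summands are $\sqrt2$-completely isomorphic to diagonal spaces $\xd(\beta)$ and $\xd(\beta^\prime)$, and by Lemma~\ref{simple_X(A)}(2) their direct sum is completely isomorphic to $\xd(\gamma)$ with $\gamma$ the join of $\beta$ and $\beta^\prime$. Now Theorem~\ref{same_basis}, applied to the resulting complete isomorphism $\xd(\gamma)\simeq\xd(\alpha)$, upgrades it to the statement that the \emph{formal identity} $e_i(\gamma)\mapsto e_i(\alpha)$ is a complete isomorphism with constant $O(C)$ in each direction --- this is the two-sided termwise comparison (off a square-summable set) that your one-sided domination lacks, and it is available only because $\gamma$ accounts for all of $\xd(\alpha)$ and not merely for the part sitting in $Y$. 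Restricting the formal identity to the index set $I$ with $\xd(\beta)=\span[e_i(\gamma):i\in I]$ lands exactly on $\span[e_i(\alpha):i\in I]$, so the subsequence is $\alpha^\prime=(\alpha_i)_{i\in I}$, with no rearrangement or matching argument needed. If you want to salvage your route you would have to prove the two-sided comparison directly, which essentially forces you to bring in $Z$ anyway.
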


\begin{proof}
Let $P$ be a projection from $\xd(\alpha)$ onto $Y$, with
$\|P\|_{cb} \leq C$. Then $\xd(\alpha)$ is $2C$-completely isomorphic
to $Y \oplus Z$, where $Z = \ker P$. By Lemma~\ref{almost_alpha},
$Y$ and $Z$ are $\sqrt{2}$-completely isomorphic to $\xd(\beta)$ and $\xd(\beta^\prime)$, respectively,
where $\beta$ and $\beta^\prime$ belong to $\is$.
By Lemma~\ref{simple_X(A)}(2), $\xd(\alpha)$ is $4 C$-completely isomorphic
to $\xd(\gamma)$, where $\gamma = (\gamma_i) \in \is$ is the ``join'' of
$\beta = (\beta_i)$ and $\beta^\prime = (\beta^\prime_i)$. More precisely,
the sequence $\gamma$ has the property that, for every $c \in [0,1]$,
$$
|\{ i : \gamma_i = c \}| = |\{ i : \beta_i = c \}| +
|\{ i : \beta^\prime_i = c \}| .
$$
Denoting the canonical basis of $\xd(\gamma)$ by $(e_i(\gamma))$,
we see that $\xd(\beta) = \span[e_i(\gamma) : i \in I]$, for some
infinite set $I \subset \N$. By Theorem~\ref{same_basis}, the formal
identity $id : \xd(\gamma) \to \xd(\alpha) : e_i(\gamma) \mapsto e_i(\alpha)$
satisfies $\|id\|_{cb}, \|id^{-1}\|_{cb} < 8 C$. In particular,
$\xd(\beta)$ $2^6 C^2$-completely isomorphic to
$\span[e_i(\alpha) : i \in I]$.
\end{proof}

\begin{remark}\label{oi_lms}
By \cite{Oi}, the completely unconditional basis in $\bsp$ is unique
up to a permutation.
\end{remark}

\begin{remark}\label{not_subbasis}
In general, the canonical basis of a subspace of $\xd(\alpha)$
($\alpha \in \is$) need not be equivalent to a subsequence of
the canonical basis of $\xd(\alpha)$. For instance,
suppose the sequence $\alpha = (\alpha_i)$ and $\beta = (\beta_i)$ are
defined by setting $\alpha_i = 2^{-n^2}$, $\beta_i = 2^{-n^2-n}$ for
$4^{n^2} \leq i < 4^{(n+1)^2}$ ($n \in \{0\} \cup \N$). By
Lemma~\ref{embeds}, $\xd(\beta)$ embeds completely isomorphically
into $\xd(\alpha)$. However, $(e_i(\beta))$ (the canonical basis of
$\xd(\beta)$) is not equivalent to any subsequence of
the canonical basis $(e_i(\alpha))$ of $\xd(\alpha)$.
Indeed, suppose, for the sake of achieving a contradiction, that
there exists a complete isomorphism $T$ from $\xd(\beta)$ to a subspace
of $\xd(\alpha)$, mapping $e_i(\beta)$ to $e_{k_i}(\alpha)$.
Fix $n \in \N$ with
$\max\{\|T\|_{cb}, \|T^{-1}\|_{cb}\} < 2^{n/2}$. Consider the sets
$$
\begin{array}{lll}
I_n  &  =  &  \{ 4^{n^2} \leq i < 4^{(n+1)^2} : k_i < 4^{(n+1)^2} \} , \cr
J_n  &  =  &  \{ 4^{n^2} \leq i < 4^{(n+1)^2} : k_i \geq 4^{(n+1)^2} \} .
\end{array}
$$
By Pigeon-Hole Principle, with $I_n$ or $J_n$ has the cardinality
grater than $4^{n^2+2n}$. If $|I_n| > 4^{n^2+2n}$, consider
$$
x = \sum_{i \in I_n} E_{i1} \otimes e_i(\beta) \in
M_{4^{(n+1)^2}}(\xd(\beta))
$$
(recall that $E_{i1}$ is the ``matrix unit'' with $1$ on the intersection of
the first column and the $i$-th row, and zeroes everywhere else).
By \eqref{basis},
$$
\|x\|^2_{M_{4^{(n+1)^2}}(\xd(\beta))} =
\max\{ 1, |I_n| \cdot 2^{-n^2-n} \} = |I_n| \cdot 2^{-n^2-n} .
$$
However, by \eqref{basis} again,
$$
\|(I_{M_{4^{(n+1)^2}}} \otimes T)x\|^2_{M_{4^{(n+1)^2}}(\xd(\alpha))} \geq
\max\{ 1, |I_n| \cdot 2^{-n^2-n} \} = |I_n| \cdot 2^{-n^2} ,
$$
yielding $\|T\|_{cb}^2 \geq 2^{-n}$. If $J_n > 4^{n^2+2n}$, consider
$$
x = \sum_{i \in J_n} E_{i1} \otimes e_i(\beta) \in
M_{4^{(n+1)^2}}(\xd(\beta)) .
$$
As before, $\|x\|^2 = |J_n| \cdot 2^{-n^2-n}$, while
$$
\|(I_{M_{4^{(n+1)^2}}} \otimes T)x\|^2_{M_{4^{(n+1)^2}}(\xd(\alpha))} \geq
\max\{ 1, |I_n| \cdot 2^{-(n+1)^2} \} = |J_n| \cdot 2^{-(n+1)^2} ,
$$
hence $\|T^{-1}\|_{cb}^2 \geq 2^{n+1}$. Thus,
$\max\{\|T\|_{cb}, \|T^{-1}\|_{cb}\} \geq 2^{n/2}$,
which yields the desired contradiction.
\end{remark}

In certain situations, the canonical basis for every subspace of
$\xd(\alpha)$ is equivalent to a subsequence of the canonical basis
of $\xd(\alpha)$.


\begin{theorem}\label{subbasis}
For any $a > 1$ there exists $\alpha \in \is \backslash \ell_2$ such that
any subspace $Y$ of $\xd(\alpha)$ (finite or infinite dimensional) has a
$1$-completely unconditional basis, $a$-equivalent to a subsequence
of the canonical basis of $\xd(\alpha)$.
\end{theorem}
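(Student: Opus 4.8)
The plan is to translate the statement into a question about scalar sequences and then to exhibit $\alpha$ by a direct block construction.

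\textbf{Reduction to sequences, and the one inequality we get for free.} Let $Y\hra\xd(\alpha)$ with $\dim Y=\infty$, and put $A=\diag(\alpha)$, so that $Y$ is completely isometric to $X(A|_Y)$ with $A|_Y=P_YAP_Y|_Y$ a compact contraction. By~\eqref{basis_general} the canonical basis of $Y$ is $1$-completely unconditional, and, reindexed along the eigenvectors of $A|_Y$, it is the canonical basis of $\xd(\mu)$ for $\mu:={\mathbf{D}}(A|_Y)$. For a subsequence $\alpha'=(\alpha_{j_k})_k$ of $\alpha$, the two instances of~\eqref{basis} show that the formal identity $\xd(\mu)\to\xd(\alpha')$ is a complete isomorphism with $\|id\|_{cb}\|id^{-1}\|_{cb}\le C$ provided $C^{-1}\mu_k\le\alpha_{j_k}\le\mu_k$ for every $k$ with $\mu_k>0$ (the finitely- or countably-many row-type directions in $\ker A|_Y$ are sent to a tail of $\alpha$ so sparse, using Lemma~\ref{almost_alpha}, that they cost only an extra factor $1+\vr$). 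Moreover, since $A=|A|$ is positive and compact, the min–max principle for its compression yields $\mu_k={\mathbf{D}}(A|_Y)_k\le\lambda_k(A)=\alpha_k$ for all $k$, hence $\#\{k:\mu_k\ge t\}\le\#\{i:\alpha_i\ge t\}$ for every $t>0$. Thus it suffices to build $\alpha\in\is\setminus\ell_2$ so that every $\mu$ of the form ${\mathbf{D}}(A|_Y)$ admits a strictly increasing sequence $(j_k)$ with $\frac{1+\vr}{a}\mu_k\le\alpha_{j_k}\le\mu_k$, for a small $\vr>0$ fixed in advance.

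\textbf{The construction.} Set $\theta=\sqrt{(1+\vr)/a}\in(0,1)$ and pick an integer $\rho$ with $\rho\ge 2$ and $\rho\theta^2\ge 1$. Let $\alpha$ be the block-constant sequence whose $m$-th block consists of $\ell_m:=\rho^m$ coordinates all equal to $v_m:=\theta^m$, and put $L_m=\ell_1+\dots+\ell_m$. Then $\alpha$ is non-increasing with $\alpha_i\to 0$, so $\alpha\in\is$; and $\sum_m\ell_m v_m^2=\sum_m(\rho\theta^2)^m=\infty$, so $\alpha\notin\ell_2$. The two features that matter: consecutive distinct values of $\alpha$ have ratio $1/\theta$, and $\ell_{m+1}=\rho^{m+1}>L_m$ (because $\rho-1\ge1$).

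\textbf{The matching.} Given $\mu={\mathbf{D}}(A|_Y)$ (entries $>0$), for each $k$ let $p(k)=\min\{m:v_m\le\mu_k\}$. Then $v_{p(k)}=\theta v_{p(k)-1}>\theta\mu_k$ and $v_{p(k)+1}>\theta^2\mu_k\ge\frac{1+\vr}{a}\mu_k$, while $v_{p(k)},v_{p(k)+1}\le\mu_k$; hence blocks $p(k)$ and $p(k)+1$ both provide legal values for $k$ (the case $p(k)=1$ being immediate). Consider the bipartite graph joining $k$ to every coordinate of $\alpha$ lying in block $p(k)$ or $p(k)+1$, and look for a matching saturating all the $k$'s. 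By the domination step, $\sum_{q\le p}\#\{k:p(k)=q\}=\#\{k:\mu_k\ge v_p\}\le L_p$; together with $\ell_{p+1}>L_p$ this gives Hall's condition — a set of indices whose natural blocks fill an interval $[a_0,b_0]$ has demand $\le L_{b_0}$, while the available coordinates (blocks $[a_0,b_0+1]$) number $L_{b_0+1}-L_{a_0-1}\ge L_{b_0}$, and an arbitrary set splits into such runs. Since the graph is locally finite, a saturating matching $k\mapsto j_k$ exists (finitely many $k$'s if $\dim Y<\infty$, countably many otherwise), and because $\mu$ and the ambient index are monotone we may relabel it so that $j_1<j_2<\cdots$. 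As $\frac{1+\vr}{a}\mu_k\le\alpha_{j_k}\le\mu_k$ for all $k$, the reduction concludes the proof, the factor $1+\vr$ being absorbed exactly as indicated.

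\textbf{Where the difficulty lies.} The whole argument turns on matching \emph{downwards}: each value $\mu_k$ is sent to the largest $\alpha$-value not exceeding it, i.e.\ into the block $p(k)$ lying just below $\mu_k$, a block made spacious by the super-geometric growth $\ell_{m+1}>L_m$; matching upwards is hopeless, since the $\alpha$-values just above $\mu_k$ live in the short initial blocks. Choosing the growth rate and ratio $\theta$ so that Hall's condition closes, and carrying the perturbation bookkeeping without degrading the sharp constant $a$, is the technical heart — the behaviour forbidden here is precisely the one exhibited by the pair $(\alpha,\beta)$ of Remark~\ref{not_subbasis}, where the blocks grow far too slowly.
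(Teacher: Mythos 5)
Your proof is correct and follows essentially the same route as the paper's: a block-constant sequence with geometrically spaced values and super-geometrically growing block lengths, the min--max domination $\mu_k\le\alpha_k$ of the singular values of the compression, and an injection of each level set of $\mu$ into a block of $\alpha$ sitting one value below it (the paper does this injection directly rather than via Hall's theorem, and interleaves zeros into $\alpha$ so as to absorb $\ker(A|_Y)$ exactly, whereas your strictly positive $\alpha$ forces $A|_Y$ to be injective, so your parenthetical about kernel directions is vacuous). The only caution is that, had the kernel case actually arisen, the appeal to Lemma~\ref{almost_alpha} as you phrase it would also require a direct-sum splitting costing an extra $\sqrt{2}$ --- but for your construction it never arises.
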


Combining this result with Proposition~\ref{unique_basis}, we obtain

\begin{corollary}\label{cor_subbasis}
There exists $\alpha \in \is \backslash \ell_2$ such that
any $C$-completely unconditional basic sequence in $\xd(\alpha)$
is $A C^B$-equivalent to a subsequence of the canonical basis
of $\xd(\alpha)$ (here, $A$ and $B$ are positive).
\end{corollary}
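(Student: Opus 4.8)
The plan is to combine Theorem~\ref{subbasis} with Propositions~\ref{unconditional} and~\ref{unique_basis}; indeed the corollary is essentially a reformulation of Theorem~\ref{subbasis} once one knows that a completely unconditional basis of a subspace of some $\xd(\alpha)$ is unique up to permutative equivalence with polynomial constant. Fix $a > 1$ and let $\alpha \in \is \backslash \ell_2$ be a sequence produced by Theorem~\ref{subbasis}. Let $(g_i)$ be a $C$-completely unconditional basic sequence in $\xd(\alpha)$ and set $Y = \span[g_i]$. By Theorem~\ref{subbasis}, $Y$ carries a $1$-completely unconditional basis $(h_i)$ which is $a$-equivalent to a subsequence $(e_{m_i}(\alpha))$ of the canonical basis of $\xd(\alpha)$. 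Thus it suffices to show that $(g_i)$ is $\phi(C)$-equivalent, up to a permutation, to $(h_i)$, with $\phi$ polynomial; composing with the $a$-equivalence then gives the statement.

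To obtain this ``uniqueness inside $Y$'', first note that the normalized $1$-completely unconditional basis $(h_i)$ of the Hilbertian space $Y$ is in fact orthonormal (apply the remark at the end of Section~\ref{op_sp} with $c = 1$: the map $U$ it produces is then an isometry and $(Uh_i)$ is orthonormal, so $(h_i)$ itself is). Put $A = \diag(\alpha)$ --- a compact contraction, since $\alpha \in \is$ --- and $\gamma_i = \|A h_i\|$; as $(h_i)$ is weakly null and $A$ is compact, $\gamma = (\gamma_i) \in c_0$. Feeding the $1$-completely unconditional basic sequence $(h_i) \subset X(A)$ into Proposition~\ref{unconditional} yields a map $T : Y \to \xd(\gamma)$, $h_i \mapsto e_i(\gamma)$, with $\|T\|_{cb}, \|T^{-1}\|_{cb} \le 1$; hence $T$ is a complete isometry carrying $(h_i)$ onto the canonical basis of $\xd(\gamma)$.

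Now transport $(g_i)$ through $T$. Since $T$ is a complete isometry, $(Tg_i)$ is again a $C$-completely unconditional basis, this time of $\xd(\gamma)$ with $\gamma \in c_0$. Proposition~\ref{unique_basis} applies and shows that $(Tg_i)$ is $16 C^{11}$-equivalent, up to a permutation, to the canonical basis $(e_i(\gamma))$ of $\xd(\gamma)$; pulling this back along the complete isometry $T^{-1}$, the sequence $(g_i)$ is $16 C^{11}$-equivalent, up to a permutation, to $(h_i)$. Composing with the $a$-equivalence of $(h_i)$ to $(e_{m_i}(\alpha))$ shows that $(g_i)$ is $16 a\, C^{11}$-equivalent, up to a permutation, to a subsequence of the canonical basis of $\xd(\alpha)$. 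This proves the corollary with $A = 16a$ and $B = 11$; the case of a finite sequence $(g_i)$ is treated identically (or is easier).

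The argument is mostly bookkeeping, and the only point requiring a little care is the identification of $Y$ with a space $\xd(\gamma)$, $\gamma \in c_0$, \emph{completely isometrically} rather than merely completely isomorphically (the latter would follow more cheaply from Proposition~\ref{make_diag}, but at the cost of extra constants and of degrading the power of $C$). This isometric identification is exactly what Proposition~\ref{unconditional} provides when applied to a $1$-completely unconditional basis, and it is what lets the final estimate stay polynomial in $C$ with the clean constant above.
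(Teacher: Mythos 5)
Your proof is correct and follows exactly the route the paper intends: the paper derives Corollary~\ref{cor_subbasis} by combining Theorem~\ref{subbasis} with Proposition~\ref{unique_basis}, and your argument is precisely that combination with the details filled in (in particular, the complete isometry $Y \simeq \xd(\gamma)$ obtained from Proposition~\ref{unconditional} with $C=1$ is the right way to transfer the uniqueness statement to the subspace $Y$). The resulting constants $A = 16a$, $B = 11$ are consistent with those of Proposition~\ref{unique_basis}.
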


\begin{proof}[Proof of Theorem~\ref{subbasis}]
Assume $a < 2$. 
Pick a sequence of integers $1 = N_0 < N_1 < \ldots$, s.t.
$N_k > 2 N_{k-1}$ for each $k$. Define a sequence $\alpha = (\alpha_i)$
by setting $\alpha_{2i} = a^{-k}$ for $N_k \leq i < N_{k+1}$,
$\alpha_{2i-1} = 0$ for any $i \in \N$.
Clearly, $\alpha \in c_0 \backslash \ell_2$.
Let $A = \diag(\alpha)$. For a subspace $Y$ of $X(A)$, let
$\beta = (\beta_i)$ is the sequence of singular values of $A|_Y$.
Define the set $\ii_1$ by
setting $\ii_1 = \{1, \ldots, M\}$ if $\rank (A|_Y) = M < \infty$,
and $\ii_1 = \N$ if $\rank (A|_Y) = \infty$.
We can also assume that the elements of $(\beta_i)_{i \in \ii_1}$
are listed in the non-increasing order. Then
$\beta_i \leq \alpha_{2i}$ for each $i$.

Denote the normalized eigenvectors of $(A|_Y)^* A|_Y$, corresponding to
the eigenvalues $\beta_i$ ($i \in \ii_1$), by $\eta_i$.
Furthermore, find the vectors $(\eta_i)_{i \in \ii_0}$, forming
an orthonormal basis in $\ker (A|_Y)$. For $i \in \ii_0$, set $\beta_i = 0$.
Let $\ii = \ii_1 \cup \ii_0$ (we assume that this union is disjoint).
Then the family $(\eta_i)_{i \in \ii}$ is the canonical basis for $Y$.

For each positive integer $k$, let $M_k$ be the smallest value of $i$
s.t. $\beta_i \leq a^{-k}$. Set $M_0 = 1$. In this notation,
$a^{1-k} \geq \beta_i > a^{-k}$ iff $M_{k-1} \leq i < M_k$.
As noted above, $\beta_i \leq \alpha_{2i}$, hence $M_k \leq N_k$.
By our choice of the sequence $(N_k)$,
$$
M_k - M_{k-1} < M_k \leq N_k \leq N_{k+1} - N_k .
$$
Thus, there exists an injective map $\pi : \ii \to \N$ s.t.
$\pi(\ii_0) \subset \{2i-1 : i \in \N\}$, and
$\pi([M_{k-1}, M_k)) \subset \{2i : i \in [N_k, N_{k+1})\}$
for each $k \in \N$. For $i \in \ii_0$, $\alpha_{\pi(i)} = \beta_i = 0$,
while for $i \in \ii_1$,
$a \alpha_{\pi(i)} = a^{1-k} \geq \beta_i > a^{-k} = \alpha_{\pi(i)}$.
Define the operator $T : Y \to \span[e_{\pi(i)} : i \in \ii] \hra \xd(\alpha)$,
defined by $T \xi_i = e_{\pi(i)}$. By \eqref{basis_general},
$T$ is a complete contraction, and $\|T^{-1}\|_{cb} \leq a$.
\end{proof}

\begin{remark}\label{complem_subsp}
The proof of Theorem~\ref{subbasis} shows that any subspace of $\xd(\alpha)$
is $a$-completely isomorphic to a completely contractively complemented subspace
of $\xd(\alpha)$. Nevertheless, $\xd(\alpha)$ contains subspaces which are not
completely complemented. To construct them, find a sequence $(\beta_i)$
such that $1 \geq \beta_1 \geq \beta_2 \geq \ldots > 0$, $\lim \beta_i = 0$,
and furthermore, $\sum_i \gamma_i^2 = \infty$, where
$\gamma_i = \alpha_i \beta_i$.
Denote the canonical basis of $\xd(\alpha)$ by $(e_i)_{i \in \N}$.
Then $e_{2i-1} = \xi_{2i-1} \oplus 0$, and
$e_{2i} = \xi_{2i} \oplus \alpha_i \xi_{2i}$ ($(\xi_j)$ is
an orthonormal basis in $\ell_2$).
For $k \geq 0$ and $i \in [N_k, n_{k+1})$, let
\begin{equation}
f_i = \beta_i e_{2i} + \sqrt{1 - \beta_i^2} \, e_{2i-1} =
\big(\beta_i \xi_{2i} + \sqrt{1 - \beta_i^2} \, \xi_{2i-1}\big) \oplus
\gamma_i \xi_{2i} .
\label{def_f_i}
\end{equation}
We show that $Y$ is not
completely complemented in $\xd(\alpha)$. Indeed, suppose,
for the sake of contradiction, that there exists a c.b.~projection
$P$ from $\xd(\alpha)$ onto $Y$. For $\vr \in \{-1,1\}^\N$, define
an operator $\Lambda_\vr \in B(\xd(\alpha))$ by setting
$\lambda_\vr e_j = \vr_{\lceil j/2 \rceil} e_j$.
For any such $\vr$, \eqref{basis} implies that $\Lambda_\vr$ is
a complete isometry. For any $i \in \N$, we have
$\Lambda_\vr \xi = \vr_i \xi$ whenever $\xi \in \span[e_{2i-1}, e_{2i}]$.
In particular, $\Lambda_\vr f_i = \vr_i f_i$.
Let $Q = {\mathrm{Ave}}_{\vr \in \{-1,1\}^\N} \Lambda_\vr P \Lambda_\vr$.
Note that $\ran Q \subset Y$, and $Q|_Y = I_Y$, hence $Q$ is a projection
onto $Y$. Furthermore, $\|Q\|_{cb} \leq \|P\|_{cb}$.

For each $i$, we have $Q e_{2i} = a_i f_i$, and $Q e_{2i-1} = b_i f_i$.
The equations $Q f_i = f_i$ and \eqref{def_f_i} yield
$a_i \beta_i + b_i \sqrt{1 - \beta_i^2} = 1$.
Then $\sup_i \max\{|a_i|, |b_i|\} \leq \|Q\|$. As
$\lim \beta_i = 0$, there exists $K \in \N$ such that
$|b_i| > 1/2$ for $i > K$. Find $N \in \N$ s.t.
$\sum_{i = K+1}^{K + N} \gamma_i^2 > 4 \|Q\|_{cb}^2$ (this is
possible, since $\sum_i \gamma_i^2 = \infty$). Consider
$x = \sum_{i = K+1}^{K + N} E_{i1} \otimes e_{2i-1} \in M_N(\xd(\alpha))$.
Then $\|x\| = 1$, and therefore,
$$
\eqalign{
\|Q\|_{cb}
&
\geq
\|(I_{M_N} \otimes Q)x\| =
\|\sum_{i = K+1}^{K + N} E_{i1} \otimes b_i f_i\|
\cr
&
\geq
\|\sum_{i = K+1}^{K + N}
E_{i1} \otimes b_i \gamma_i \xi_{2i}\|_{M_N(\col)} =
\Big( \sum_{i = K+1}^{K + N} |b_i|^2 \gamma_i^2 \Big)^{1/2}
\cr
&
\geq
\frac{\Big( \sum_{i = K+1}^{K + N} \gamma_i^2 \Big)^{1/2}}{2}
> \|Q\|_{cb} ,
}
$$
which is impossible.
\end{remark}

\begin{remark}\label{isom_to_complem}
Suppose a Banach space $E$ is such that every infinite dimensional subspace $E$
is isomorphic to a complemented subspace of $E$. We not not know whether $E$
is necessarily isomorphic to a Hilbert space.
\end{remark}


%

\section{Completely isomorphic classification of subspaces of $X(A)$}\label{classification}


The main goal of this section is to prove Theorem~\ref{bireducible}
and Corollary~\ref{biembeddable} below.
Recall that $\comp$
is the set of compact contractions which are not Hilbert-Schmidt.

\begin{theorem}\label{bireducible}
If $A \in B(\ell_2)$ belongs to $\comp$,
then $(\bs(X(A)), \simeq)$ is Borel bireducible
to the complete $\ks$ relation.
\end{theorem}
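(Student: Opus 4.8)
The goal is to show that the equivalence relation $(\bs(X(A)),\simeq)$ of complete isomorphism between subspaces of $X(A)$ is Borel bireducible to the complete $\ks$ relation (the universal $K_\sigma$ equivalence relation). I would establish the two reductions separately.

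\emph{Step 1: reducing to sequences.} By Lemma~\ref{almost_alpha} and Proposition~\ref{make_diag}, every subspace $Y \hra X(A)$ is (uniformly) completely isomorphic to a space $\xd(\beta)$ with $\beta \in \is$, and by Corollary~\ref{classify_cor} the relation of complete isomorphism between the $\xd(\beta)$'s is exactly the equivalence relation $\sim$ on $\is$ from Section~\ref{sequences}. So the first task is to build a Borel map $\bs(X(A)) \to \is$, $Y \mapsto \beta(Y)$, sending $Y$ to (a tail-equivalent representative of) the singular value sequence of $A|_Y$, and to check it is a reduction of $\simeq$ to $\sim$ in both directions. The forward direction is Corollary~\ref{classify_cor}; the backward direction (if $\beta(Y) \sim \beta(Z)$ then $Y \simeq Z$) also comes from Corollary~\ref{classify_cor}. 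Borelness of $Y \mapsto \beta(Y)$ follows because one can read off singular values of $A|_Y$ from a Borel selection of a dense sequence in $Y$ via the Effros–Borel structure; this is routine but needs to be written.

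\emph{Step 2: $\sim$ reduces to complete $\ks$.} The relation $\alpha \sim \beta$ on $\is$ asks for a constant $K$ and an exceptional set $S$ with $\sum_{i\in S}(\alpha_i^2+\beta_i^2) < \infty$ and $K^{-1}\alpha_i \le \beta_i \le K\alpha_i$ off $S$ (Proposition~\ref{equiv_seq}). After passing to logarithms $u_i = -\log\alpha_i \in [0,\infty]$, this says $|u_i - v_i|$ is bounded off a set where $\sum e^{-2u_i} + \sum e^{-2v_i}$ converges — i.e. it is visibly a countable increasing union of closed conditions (bounded by $n$, exceptional mass $< n$), so $\sim$ is $K_\sigma$ as an equivalence relation; this gives the easy reduction $\sim \ \le_B\ \ks$.

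\emph{Step 3: complete $\ks$ reduces to $\sim$ (the hard direction).} Here I would use a known model of the complete $\ks$ equivalence relation, e.g. the relation $E_{K_\sigma}$ on $\prod_n \omega$ given by $x\,E_{K_\sigma}\,y \iff \sup_n|x(n)-y(n)| < \infty$, or Rosendal's canonical complete $K_\sigma$ equivalence relation. One encodes $x \in \prod_n\omega$ as a sequence $\alpha(x)\in\is$ by laying out, in the $n$-th block of some rapidly-growing length $\ell_n$, a constant value $2^{-x(n)-c_n}$ for a fixed background scale $c_n\to\infty$ chosen so that (a) the whole sequence is in $\is\setminus\ell_2$ (decreasing, tending to $0$, not square-summable) and (b) the block lengths $\ell_n$ grow fast enough that no finite exceptional set $S$ can absorb a whole block — so that $\alpha(x)\sim\alpha(y)$ forces, block by block for all large $n$, $|x(n)-y(n)|$ bounded by the common constant $K$, hence $x\,E_{K_\sigma}\,y$; conversely a uniform bound $|x(n)-y(n)|\le m$ gives $\alpha(x)\sim\alpha(y)$ with $K=2^m$. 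Composing with the reduction of $\sim$ back into $(\bs(X(A)),\simeq)$ via $\alpha\mapsto\xd(\alpha)$, realized inside $X(A)$ — this is where $A\in\comp$ is used: since $A$ is compact, not Hilbert–Schmidt, one shows that every $\xd(\alpha)$ with $\alpha\in\is\setminus\ell_2$ embeds completely isomorphically as a subspace of $X(A)$, using Lemma~\ref{embeds} together with the fact that $0\in\sigma_{ess}(|A|)$ and $A$ has a singular value sequence dominating, up to the relation $\prec$, any prescribed $\is\setminus\ell_2$ sequence after a suitable further perturbation — yields the required Borel reduction, and we must check this assignment $x\mapsto (\text{a subspace of }X(A))$ is Borel into the Effros–Borel structure.

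\emph{Main obstacle.} The delicate point is Step 3: designing the block structure so that the ``exceptional set of finite $\ell_2$-mass'' in the definition of $\sim$ genuinely cannot be exploited to cheat — i.e. ensuring the blocks are long enough and the background scale $c_n$ large enough that any $S$ with $\sum_{i\in S}\alpha_i^2 < \infty$ misses cofinitely many indices of every block — and simultaneously arranging that the resulting sequences are honestly realized as singular value sequences of subspaces $A|_Y$ for $Y\hra X(A)$ (this forces a compatibility between the prescribed $\alpha(x)$ and the fixed singular values of $A$, handled via the $\prec$-flexibility of Lemma~\ref{embeds} and Corollary~\ref{inf_ker}). Verifying that the whole encoding $x\mapsto Y(x)$ is Borel for the Effros–Borel $\sigma$-algebra on $\bs(X(A))$ is bookkeeping but must not be skipped.
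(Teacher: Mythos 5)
Your plan is correct and follows essentially the same route as the paper: a Borel bireduction between $(\bs(X(A)),\simeq)$ and a sequence relation built from the singular values of $A|_Y$ (the paper's maps $\nn$ and $\bfy$ into the compactly coded set $S_A$, with Corollary~\ref{classify_cor} giving the equivalence), then a tree/K\"onig-type argument showing that relation is $\ks$, and finally a block encoding of $\eks$ laid on top of the singular value sequence of $A$, where the non-Hilbert--Schmidt hypothesis makes each block too heavy in $\ell_2$-mass for the exceptional set to swallow. The obstacles you single out (realizability of the encoded sequences inside $X(A)$, heaviness of the blocks, Borelness of both maps) are exactly the points the paper's Propositions~\ref{borel1} and \ref{borel2} and Lemma~\ref{k_sigma} address.
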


Together with Corollary~\ref{classify_cor}, this theorem immediately implies

\begin{corollary}\label{biembeddable}
If $A \in B(\ell_2)$ belongs to $\comp$,
then the relation of complete biembeddability on
$\bs(X(A))$ is Borel bireducible to the complete $\ks$ relation.
\end{corollary}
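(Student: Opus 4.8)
The plan is to deduce Corollary~\ref{biembeddable} from Theorem~\ref{bireducible} together with Corollary~\ref{classify_cor}, so the work is entirely ``structural'' — no new hard analysis of the spaces $X(A)$ is required. First I would note that by definition Borel bireducibility of two analytic equivalence relations $R,S$ means $R \leq_B S$ and $S \leq_B R$; to prove the corollary it suffices to show that, for $A \in \comp$, the relation of complete biembeddability on $\bs(X(A))$ — call it $\sim_{be}$ — is Borel bireducible to the relation of complete isomorphism on $\bs(X(A))$ — call it $\simeq$ — since Theorem~\ref{bireducible} already identifies the latter with the complete $\ks$ relation, and Borel bireducibility is transitive.

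The key observation is that on $\bs(X(A))$, for $A \in \comp$, complete biembeddability and complete isomorphism \emph{coincide as relations}. Indeed, let $Y, Z \in \bs(X(A))$. Each is isometric to $X(A|_Y)$, $X(A|_Z)$ respectively, with $A|_Y$, $A|_Z$ compact contractions; by Lemma~\ref{almost_alpha} (or Proposition~\ref{make_diag}) each is completely isomorphic to a space $\xd(\gamma)$ with $\gamma \in \is$. Say $Y \simeq \xd(\alpha)$ and $Z \simeq \xd(\beta)$ with $\alpha,\beta \in \is$. If $Y$ and $Z$ are completely biembeddable, then $\xd(\alpha)$ embeds completely isomorphically into $\xd(\beta)$ and conversely; by the equivalence ``(2)$\Leftrightarrow$(1)'' of Corollary~\ref{classify_cor} this forces $\xd(\alpha) \simeq \xd(\beta)$, hence $Y \simeq Z$. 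The reverse implication is trivial: complete isomorphism implies complete biembeddability. Thus $\sim_{be}$ and $\simeq$ are literally the same subset of $\bs(X(A)) \times \bs(X(A))$, so in particular each Borel-reduces to the other via the identity map, which is visibly Borel. (One should observe that the identity map $\bs(X(A)) \to \bs(X(A))$ is a reduction in the trivial sense; no measurability issue arises.)

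Combining: $\sim_{be} \,=\, \simeq$ on $\bs(X(A))$, and by Theorem~\ref{bireducible}, $\simeq$ is Borel bireducible to the complete $\ks$ relation; therefore $\sim_{be}$ is Borel bireducible to the complete $\ks$ relation. This is exactly the statement of Corollary~\ref{biembeddable}.

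I do not anticipate a genuine obstacle here — the corollary is a formal consequence of two already-proved results. The only point requiring a word of care is making sure every subspace of $X(A)$ is accounted for by the classification of the $\xd(\gamma)$'s, i.e.\ that Lemma~\ref{almost_alpha}/Proposition~\ref{make_diag} applies uniformly: this is fine because $A|_Y$ is automatically a compact contraction whenever $A$ is compact and $\|A\|\le 1$, and the argument in Corollary~\ref{classify_cor} only uses $\alpha,\beta \in \is$. One might also wish to remark that, since biembeddability implies isomorphism \emph{only} for subspaces of these particular spaces $X(A)$ (it is the special structure encoded in Corollary~\ref{classify_cor} that makes it work), the statement genuinely uses the hypothesis $A \in \comp$; but no further argument is needed.
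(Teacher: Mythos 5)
Your proposal is correct and follows exactly the paper's route: the paper likewise derives the corollary immediately from Theorem~\ref{bireducible} together with Corollary~\ref{classify_cor}, the point being that complete biembeddability and complete isomorphism coincide as relations on $\bs(X(A))$. You have merely spelled out the intermediate reduction to the spaces $\xd(\alpha)$, which the paper leaves implicit.
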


The proof of Theorem~\ref{bireducible} proceeds in two steps.
First, we introduce the space $S_A$ of sequences of non-negative
generalized integers, with an equivalence
relation $\ssim$, and show the latter is Borel bireducible with
$(\bs(X(A)), \simeq)$. Then we prove that $(S_A, \ssim)$ is, in fact,
a complete $\ks$ relation.

Suppose $A \in B(\ell_2)$ is of class $\comp$.
By Lemma~\ref{simple_X(A)}, we can assume that $\|A\| < 1$,
and $A \geq 0$. List the positive eigenvalues of $A$ in the non-increasing
order: $1 > \|A\| = s_1^o \geq s_2^o \geq \ldots > 0$.
In the terminology of Section~\ref{sequences},
$(s_i^o)_{i \in \N} = {\mathbf{D}}(A)$.
Clearly, $\lim_i s_i^o = 0$. Let $(\xi_i)_{i \in \N}$ be
the normalized eigenvectors of $A$, corresponding to the eigenvalues
$s_i^o$. We can identify $\span[\xi_i : i \in \N]$ with $\ell_2$.
Consider the operator
$\hat{A} = \diag(s_i^o) \oplus 0 \in B(\ell_2 \oplus \ell_2)$.
By Corollary~\ref{inf_ker}, $X(A) \simeq X(\hat{A})$.
For the rest of this section, we assume that $A = \hat{A}$.

Denote by $(\xi_i^\prime)_{i \in \N}$ the canonical orthonormal
basis in the second copy of $\ell_2$. Then the canonical basis
of $X(A)$ is the collection of vectors $e_i = \xi_i \oplus s_i^o \xi_i$
and $f_i = \xi_i^\prime \oplus 0$. As in \eqref{basis_general}, we have,
for $n \times n$ matrices $a_1, b_1, a_2, b_2, \ldots$,
\begin{equation}
\eqalign{
&
\|\sum_i a_i \otimes e_i + \sum_i b_i \otimes f_i\|_{M_n(X(A))}^2
\cr
&
=
\max \big\{ \|\sum_i a_i a_i^* + \sum_i b_i b_i^*\|,
\|\sum_i s_i^{o2} a_i^* a_i\| \big\} .
}
\label{particular_basis}
\end{equation}

For an infinite dimensional $Y \hra X(A)$, we let
$(s_i(Y)) = \mathbf{D}(A|_Y)$. Recalling the definition of
$\mathbf{D}$ from Section~\ref{sequences}, we see that, if
$\rank(A|_Y) = \infty$, then $s_1(Y) \geq s_2(Y) \geq \ldots$ are the
positive singular values of $A|_Y$, listed in the non-increasing order.
In the case of $\rank(A|_Y) = n < \infty$,
$s_1(Y) \geq s_2(Y) \geq \ldots \geq s_n(Y)$
are the $n$ positive singular values of $A|_Y$, and $s_i(Y) = 0$ for $i > n$.
In this notation, $s_k^o = s_k(X(A))$.
Clearly, $s_k(Y) \leq s^o_k$ for any $k$,
and any $Y \hra X(A)$.

For $k \in \N$, set $\nn_k(Y) = \sup\{\ell \in \N : 2^{1-\ell} \geq s_k(Y)\}$.
The sequence $\nn(Y) = (\nn_k(Y))_{k \in \N}$
belongs to $\N_*^\N$, where $\N_* = \N \cup \{\infty\}$ is viewed as the
$1$-point compactification of $\N$.
For $k \in \N$ let $\alpha_k = \nn_k(X(A))$. Define $S_A$ as the set of all
elements $\beta = (\beta_i)_{i \in \N} \in \N_*^\N$, such that
(1) $\beta_k \geq \alpha_k$ for any $k$, and
(2) $\beta_1 \leq \beta_2 \leq \ldots$.
Equipping $\N_*^\N$ with its product topology, we see that
$S_A$ is closed.

For any infinite dimensional $Y \hra X(A)$,
the sequence $(\nn_k(Y))_{k \in \N}$ belongs
to $S_A$. Conversely, for any $\beta \in S_A$
there exists $Y \hra X(A)$ s.t. $\beta = \nn(Y)$.
Indeed, suppose $\beta_k \in \N_*$, $\beta_k \geq \alpha_k$ for any $k$,
and $\beta_1 \leq \beta_2 \leq \ldots$.
Let $g_i = \sin \phi_i e_i + \cos \phi_i f_i$, with
$s_i^o \sin \phi_i = 2^{-\beta_i}$. We denote $\span[g_i : i \in \N]$
by $\bfy(\beta)$, where $\beta = (\beta_i)$. By
\eqref{particular_basis}, $\nn(\bfy(\beta)) = \beta$.

Define the relation $\ssim$ on $S_A$ as follows: $\beta \ssim \gamma$
if there exists $K \in \N$ and $I \subset \N$ s.t.
$|\beta_i - \gamma_i| \leq K$ for any
$i \notin I$, and $\sum_{i \in I} (4^{-\beta_i} + 4^{-\gamma_i}) \leq K$.
By Corollary~\ref{classify_cor}, $\beta \ssim \gamma$
iff $\bfy(\beta) \simeq \bfy(\gamma)$, and conversely,
$Y \simeq Z$ iff $\nn(Y) \ssim \nn(Z)$.

\begin{proposition}\label{borel1}
$\nn$ and $\bfy$ are Borel maps.
\end{proposition}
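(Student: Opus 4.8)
The plan is to unwind the definitions of the two maps and show that the preimages of basic open sets in the target Effros–Borel (resp. product) structure are Borel. Recall the Effros–Borel structure on $\bs(X(A))$ is generated by sets of the form $\{Y : Y \cap V \neq \emptyset\}$ for $V$ open in $X(A)$, and a standard way to encode elements of $\bs(X(A))$ is via a Borel selection of a dense sequence $(d_n(Y))_n$ in $Y$; I will work with such a parametrization, as in Section~2 of \cite{FRo}. The point is that $\nn(Y)$ is determined by the singular numbers $s_k(Y) = s_k(A|_Y)$, and these are in turn determined by the sequence of inner products $\langle A d_m(Y), A d_n(Y)\rangle$ together with $\langle d_m(Y), d_n(Y)\rangle$, all of which depend in a Borel (indeed continuous, on the parametrizing Polish space) manner on $Y$.

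First I would treat $\bfy$. Given $\beta \in S_A$, the space $\bfy(\beta) = \span[g_i : i \in \N]$ where $g_i = \sin\phi_i \, e_i + \cos\phi_i \, f_i$ and $s_i^o \sin\phi_i = 2^{-\beta_i}$; since $\beta \mapsto (\phi_i)_i$ is continuous $S_A \to [0,\pi/2]^\N$ (using $\beta_k \geq \alpha_k$ to guarantee $2^{-\beta_i}/s_i^o \leq 1$, and interpreting $\beta_i = \infty$ as $\phi_i = 0$), the map $\beta \mapsto (g_i)_i$ is continuous into $X(A)^\N$. Composing with a fixed Borel map $X(A)^\N \to \bs(X(A))$ sending a sequence to the closed span of its (infinite-dimensional) linear span — this is a standard Borel operation on the Effros–Borel space, see \cite[Section 12.C]{Ke} or \cite{FRo} — shows $\bfy$ is Borel. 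One should check the linear span is genuinely infinite-dimensional, which is immediate since the $g_i$ are linearly independent (they have orthonormal "row parts" $\xi_i$), so $\bfy(\beta)$ is a legitimate element of $\bs(X(A))$.

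For $\nn$ the work is to express $\nn_k(Y) = \sup\{\ell : 2^{1-\ell} \geq s_k(Y)\}$ as a Borel function of $Y$. The key step is that $Y \mapsto s_k(Y)$ is Borel. Fixing a Borel sequence $(d_n(Y))_n$ dense in $Y$ with $\|d_n(Y)\| \leq 1$, the operator $A|_Y$ is determined by $(A d_n(Y))_n$, and by the min–max characterization of singular numbers (as in \eqref{minimax}) one has
\[
s_k(Y)^2 = \inf_{F} \ \sup \{ \|A y\|^2 : y \in F^\perp \cap Y,\ \|y\| \leq 1 \},
\]
the infimum over $(k-1)$-dimensional subspaces $F$; this can be rewritten as an infimum/supremum over finite rational combinations of the $d_n(Y)$, hence as a countable sup–inf of Borel (in fact continuous) functions of the parameter, so it is Borel. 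Then $\{Y : s_k(Y) < 2^{1-\ell}\}$ is Borel for each $k, \ell$, and since $\nn_k(Y) \geq \ell \iff s_k(Y) \leq 2^{1-\ell}$, each coordinate $Y \mapsto \nn_k(Y) \in \N_*$ is Borel; therefore $Y \mapsto \nn(Y) = (\nn_k(Y))_k \in S_A \subset \N_*^\N$ is Borel.

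The main obstacle is the measurability of $Y \mapsto s_k(Y)$ — one must be careful that the min–max expression really does reduce to a \emph{countable} sup–inf over the Borel parametrization, i.e. that restricting $F$ and $y$ to rational combinations of the $d_n(Y)$ does not change the value. This follows from continuity of $y \mapsto \|Ay\|$ and density of $(d_n(Y))_n$ in $Y$, together with the fact that finite-dimensional subspaces spanned by such combinations are dense (in the appropriate Grassmannian sense) among all finite-dimensional subspaces of $Y$; I would spell this approximation argument out carefully, as it is the only genuinely non-routine point. Everything else is bookkeeping with the standard Borel operations on the Effros–Borel space.
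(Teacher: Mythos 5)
Your argument for $\bfy$ is essentially the paper's: both reduce to the observation that each $g_i$ depends continuously on $\beta_i$, so that the condition ``some finite rational combination of the $g_i$ lies in a given open set'' is open in $\beta$, and Borelness follows by taking countable unions. For $\nn$ the two arguments share the same skeleton (minimax characterization of singular values plus approximation by a countable dense family), but they use different forms of the minimax principle, and the difference matters. You write $s_k(Y)^2$ as an \emph{infimum} over $(k-1)$-dimensional subspaces $F$ of a supremum over $F^\perp\cap Y$, and then must show that restricting $F$ to rational combinations of a selected dense sequence does not change the value; you correctly identify this subspace-approximation step as the only non-routine point, but you leave it as a promise rather than carrying it out, and it is precisely the crux of the measurability claim. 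The paper instead uses the \emph{max--min} form: $s_m(A|_Y)>b$ iff there is an $m$-dimensional subspace of $Y$ on which $\|A\cdot\|$ is bounded below by $b$ on unit vectors. With this formulation the condition $\nn_m(Y)>\beta_m$ becomes, directly, ``for every $r$ there is an orthonormal $m$-tuple $\xi$ from a fixed countable dense family $\om$ with $\ball(\xi_i,1/r)\cap Y\neq\emptyset$ for each $i$ and $\|\sum_i\gamma_i A\xi_i\|>2^{-\beta_m}-1/r$ for all $\gamma$ in a countable dense subset of the sphere of $\ell_2^m$'' --- a countable intersection of countable unions of basic Effros--Borel sets, with only an easy existential approximation (perturbing an orthonormal tuple) needed. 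So the paper's choice of minimax form, together with testing membership via balls around a fixed countable family rather than via a Borel dense-sequence selector, sidesteps exactly the step your write-up defers. Your route can be completed (the gap metric argument you sketch does work, since $\|A\cdot\|$ is $1$-Lipschitz), but if you keep it you must actually supply that approximation; switching to the max--min form is the cheaper fix.
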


This immediately yields:

\begin{corollary}\label{reduction1}
$(S_A, \ssim)$ and $(\bs(X(A)), \simeq)$ are Borel bireducible to each other.
\end{corollary}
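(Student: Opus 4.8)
The plan is to deduce Corollary~\ref{reduction1} directly from Proposition~\ref{borel1} together with the equivalences already established in the text. Recall that two equivalence relations $E$ on a standard Borel space $X$ and $F$ on a standard Borel space $Y$ are Borel bireducible if there are Borel maps $f\colon X\to Y$ and $g\colon Y\to X$ with $x\mathrel{E}x'\iff f(x)\mathrel{F}f(x')$ and $y\mathrel{F}y'\iff g(y)\mathrel{E}g(y')$. Here the two spaces are $S_A$ (a closed, hence standard Borel, subset of $\N_*^\N$ with the relation $\ssim$) and $\bs(X(A))$ with its Effros--Borel structure and the relation $\simeq$ of complete isomorphism.

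First I would take $f=\nn\colon \bs(X(A))\to S_A$ and $g=\bfy\colon S_A\to\bs(X(A))$. By Proposition~\ref{borel1} both maps are Borel, so it only remains to check that each is a reduction. For $\nn$: it was observed in the paragraph preceding Proposition~\ref{borel1} that, by Corollary~\ref{classify_cor}, $Y\simeq Z$ if and only if $\nn(Y)\ssim\nn(Z)$ for all infinite-dimensional $Y,Z\hra X(A)$; this is exactly the statement that $\nn$ is a Borel reduction of $(\bs(X(A)),\simeq)$ to $(S_A,\ssim)$. For $\bfy$: again by the same paragraph, $\beta\ssim\gamma$ if and only if $\bfy(\beta)\simeq\bfy(\gamma)$, which says precisely that $\bfy$ is a Borel reduction of $(S_A,\ssim)$ to $(\bs(X(A)),\simeq)$. (One should note that $\bfy(\beta)$ is genuinely infinite-dimensional for every $\beta\in S_A$, since the vectors $g_i=\sin\phi_i\,e_i+\cos\phi_i\,f_i$ are linearly independent, so $\bfy$ indeed lands in $\bs(X(A))$; and likewise $\nn(Y)$ lies in $S_A$ for every infinite-dimensional $Y$, as was checked in the text.)

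Combining the two displays, $\nn$ witnesses a Borel reduction in one direction and $\bfy$ in the other, so $(S_A,\ssim)$ and $(\bs(X(A)),\simeq)$ are Borel bireducible. There is essentially no remaining obstacle: the entire content has been front-loaded into Proposition~\ref{borel1} (Borelness of the two maps) and into Corollary~\ref{classify_cor} (which translates complete isomorphism of the spaces $\bfy(\beta)\simeq\xd(\cdot)$ into the combinatorial relation $\ssim$ on sequences). The only point requiring a word of care is the bookkeeping that $\ssim$, as defined via the constant $K$ controlling both $|\beta_i-\gamma_i|$ off a set $I$ and $\sum_{i\in I}(4^{-\beta_i}+4^{-\gamma_i})$, matches exactly the equivalence $\sim$ of the associated scalar sequences $s_i^o\sin\phi_i=2^{-\beta_i}$ under Proposition~\ref{equiv_seq}; but this matching is precisely what was asserted (``By Corollary~\ref{classify_cor}, $\beta\ssim\gamma$ iff $\bfy(\beta)\simeq\bfy(\gamma)$'') in the paragraph defining $\ssim$, so it may be quoted. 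Hence the corollary follows immediately.
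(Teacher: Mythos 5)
Your proposal is correct and follows exactly the paper's route: the paper derives the corollary immediately from Proposition~\ref{borel1} (Borelness of $\nn$ and $\bfy$) combined with the equivalences $\beta \ssim \gamma \iff \bfy(\beta) \simeq \bfy(\gamma)$ and $Y \simeq Z \iff \nn(Y) \ssim \nn(Z)$ recorded just before that proposition via Corollary~\ref{classify_cor}. Your additional remarks (that $\bfy(\beta)$ is infinite dimensional and that $\nn(Y)$ lands in $S_A$) are correct bookkeeping points that the paper also addresses in the surrounding text.
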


\begin{proof}[Proof of Proposition~\ref{borel1}]
First we handle the map $\bfy$.
We have to show that, for any open set $U \subset X(A)$,
$\{\beta \in S_A : \bfy(\beta) \cap U \neq \emptyset\}$ is Borel. 
But $\bfy(\beta) \cap U \neq \emptyset$ iff there exist $m \in \N$ and
$\lambda_1, \ldots, \lambda_m \in \Q + i \Q$ s.t.
$\sum_{i=1}^m \lambda_i g_i \in U$. Here, the vectors $(g_i)$ come
from the definition of $\bfy$. Note that, for each $i$, $g_i$ depends
solely (and continuously) on $\beta_i$.
Therefore, for each $m$-tuple $(\lambda_i)_{i=1}^m$,
$\sum_{i=1}^m \lambda_i g_i \in U$ is an open condition on $\beta$.
Thus, $\{\beta \in S_A : \bfy(\beta) \cap U \neq \emptyset\}$ is Borel.

Now consider $\nn$. Fix $m, \beta_m \in \N$, and show that the set
of all $Y \in \bs(X(A))$, for which $\nn_m(Y) > \beta_m \in \N$, is Borel.
To this end, find a countable set $\om$ of orthonormal $m$-tuples
$\xi = (\xi_1, \ldots, \xi_m)$ in $X(A)$, with the property that,
for every $\vr > 0$, and any orthonormal $m$-tuple $(\eta_1, \ldots, \eta_m)$
in $X(A)$, there exists $\xi = (\xi_1, \ldots, \xi_m) \in \om$ s.t.
$\|\xi_i - \eta_i\| < \vr$ for any $i$. Furthermore, find a set
$\Gamma_m$ of $m$-tuples
$\gamma = (\gamma_1, \ldots, \gamma_m) \in \C^m$, dense
in the unit sphere of $\ell_2^m$.

The Minimax Principle (see e.g. \cite[p.~75]{Bh}) states that, for an
operator $T \in B(H,K)$, we have $s_m(T) > b$ iff $H$ has an
$m$-dimensional subspace $E$ such that $\|T \eta\| > b$ for any
norm one $\eta \in E$. Therefore, $\nn_m(Y) > \beta_m \in \N$
(that is, $s_m(A|_Y) \geq 2^{-\beta_m}$) iff for every $\vr > 0$
there exists an $m$-tuple of orthonormal vectors
$\eta_1, \ldots, \eta_m \in Y$, s.t.
$\|\sum_{i=1}^m \gamma_i A \eta_i\| > 2^{-\beta_m} - \vr$
whenever $\sum_{i=1}^m |\gamma_i|^2 = 1$. This, in turn,
is equivalent to the following statement:
for every $r \in \N$, there
exists $(\xi_1, \ldots, \xi_m) \in \om$ s.t., for $1 \leq i \leq m$,
$\ball(\xi_i, 1/r) \cap Y \neq \emptyset$ (here, $\ball(x,c)$
denotes the open ball of radius $c$, with the center at $x$),
and $\|\sum_i \gamma^i A \xi_i\| > 2^{-\beta_m} - 1/r$
for every $(\gamma_i)_{i=1}^m \in \Gamma_m$.
This condition is Borel, hence $\nn$ is Borel.
\end{proof}

\begin{lemma}\label{k_sigma}
$(S_A, \ssim)$ is a $\ks$ relation.
\end{lemma}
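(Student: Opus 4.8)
The plan is to realize $\ssim$, viewed as a subset of $S_A \times S_A$, as a countable union of compact sets. Since $\N_*^\N$ is compact (a countable product of one-point compactifications) and $S_A$ is closed in it, the product $S_A \times S_A$ is compact metrizable; there every closed set is compact, so it suffices to produce closed sets $R_K$ ($K \in \N$) with $\ssim = \bigcup_K R_K$.

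The first step is to remove the existential quantifier over the auxiliary set $I$ in the definition of $\ssim$. If $(K,I)$ witnesses $\beta \ssim \gamma$, then the requirement $|\beta_i - \gamma_i| \leq K$ for $i \notin I$ forces $I \supseteq I_K(\beta,\gamma) := \{i : |\beta_i - \gamma_i| > K\}$, and since every summand $4^{-\beta_i} + 4^{-\gamma_i}$ is non-negative, replacing $I$ by $I_K(\beta,\gamma)$ only decreases the sum. Hence $\beta \ssim \gamma$ holds with constant $K$ if and only if $\sum_{i \in I_K(\beta,\gamma)} (4^{-\beta_i} + 4^{-\gamma_i}) \leq K$, and I take $R_K$ to be the set of pairs obeying this single inequality (with the conventions $4^{-\infty} = 0$, and $|\beta_i - \gamma_i| > K$ whenever exactly one of $\beta_i, \gamma_i$ equals $\infty$). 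Then $\ssim = \bigcup_K R_K$, and the existential over subsets $I$, which could jeopardize closedness, has been eliminated.

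To see that each $R_K$ is closed I would pass to finite truncations: a series of non-negative terms is $\leq K$ iff all its finite partial sums are $\leq K$, so $R_K = \bigcap_F \{\Phi_{K,F} \leq K\}$, where $F$ ranges over finite subsets of $\N$ and $\Phi_{K,F}(\beta,\gamma) = \sum_{i \in F,\, |\beta_i - \gamma_i| > K} (4^{-\beta_i} + 4^{-\gamma_i})$. It then suffices to show each $\Phi_{K,F}$ is continuous. Convergence in $\N_*^\N$ is coordinatewise, and $\N$ sits discretely in $\N_*$, so along $(\beta^{(n)}, \gamma^{(n)}) \to (\beta,\gamma)$ I would examine each of the finitely many coordinates $i \in F$. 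If $\beta_i$ and $\gamma_i$ are both finite, these coordinates are eventually constant, so both the threshold test and the summand $4^{-\beta_i} + 4^{-\gamma_i}$ are eventually unchanged. If some coordinate tends to $\infty$, its factor $4^{-\beta_i^{(n)}}$ tends to $0$, so the contribution of index $i$ is squeezed to its limiting value. Adding the finitely many coordinate contributions yields $\Phi_{K,F}(\beta^{(n)},\gamma^{(n)}) \to \Phi_{K,F}(\beta,\gamma)$.

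The one delicate point is that the indicator $\one[|\beta_i - \gamma_i| > K]$ is not continuous: as a coordinate runs off to $\infty$ the index $i$ may switch in and out of $I_K$. This is precisely where the squeeze is needed—any index whose membership in $I_K$ is unstable along the sequence must have a coordinate approaching $\infty$, so its summand is negligible and the instability does not affect the limit of $\Phi_{K,F}$. With each $R_K$ closed, hence compact in $S_A \times S_A$, the decomposition $\ssim = \bigcup_K R_K$ exhibits $\ssim$ as a $\ks$ relation.
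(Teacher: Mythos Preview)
Your argument is correct and takes a genuinely different route from the paper. The paper's proof keeps the existential quantifier over $I$ and handles it by a compactness-of-trees argument: it defines, for each $n$, the set of pairs $(\beta,\gamma)$ for which \emph{some} $I_n \subset \{1,\ldots,n\}$ satisfies the two constraints on the first $n$ coordinates, intersects over $n$ to obtain the compact set $F(K)$, and then invokes K\"onig's Lemma to show that an infinite branch (i.e.\ a global $I$) exists whenever all finite levels are nonempty. Your observation that one may always replace $I$ by the canonical minimal choice $I_K(\beta,\gamma) = \{i : |\beta_i - \gamma_i| > K\}$ eliminates the existential entirely and with it the need for K\"onig's Lemma; closedness of $R_K$ then follows from the elementary continuity/squeeze argument you give for the finite truncations $\Phi_{K,F}$. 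Your approach is shorter and more self-contained; the paper's tree argument, on the other hand, does not require tracking the behaviour of the indicator $\one[|\beta_i - \gamma_i|>K]$ at infinite coordinates, which is the one point where your proof needs care (and which you handle correctly via the squeeze).
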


\begin{proof}
We have to show that the set
$F = \{ (\beta, \gamma) \in S_A \times S_A : \beta \ssim \gamma\}$
is a $\ks$ set, that is, a countable union of compact sets. To this end,
define a family of subsets of $S_A \times S_A$, described below.
For $K, n \in \N$ and $I_n \subset \{1, \ldots, n\}$, define
$F(K,n,I_n)$ as the set of all pairs $(\beta, \gamma)$
($\beta = (\beta_i)$, $\gamma = (\gamma_i)$) with
the property that $|\beta_i - \gamma_i| \leq K$ for
$i \in \{1, \ldots, n\} \backslash I_n$, and
$\sum_{i \in I_n} (4^{-\beta_i} + 4^{-\gamma_i}) \leq K$.
Let $F(K,n) = \cup_{I_n \subset \{1, \ldots, n\}} F(K,n,I_n)$,
and $F(K) = \cap_n F(K,n)$. It suffices to show that
$F = \cup_{K \in \N} F(K)$.
Indeed, $F(K,n,I_n)$ is a compact subset of $S_A \times S_A$,
hence so is $F(K,n)$ (as a finite union of compact sets).
Furthermore, $F(K)$ is also compact, and $\cup_K F(K)$
is $\ks$.

Show first that $F \subset \cup_K F(K)$. By definition,
$\beta \ssim \gamma$ if there exists $K \in \N$ and
$I \subset \N$ s.t. $|\beta_i - \gamma_i| \leq K$ for any $i \notin I$,
and $\sum_{i \in I} (4^{-\beta_i} + 4^{-\gamma_i}) \leq K$.
Letting $I_n = I \cap \{1, \ldots, n\}$, we see that
$(\beta, \gamma) \in F(K,n,I_n)$ for each $n$, hence
$(\beta, \gamma) \in F(K)$.

To prove the converse implication, suppose $(\beta, \gamma) \in F(K)$
for some $K$, and show that $\beta \ssim \gamma$.
Construct a tree $T \subset \{0,1\}^\N$: for each $n$,
$T \cap \{0,1\}^n$ consists
of all the sets $I_n$ s.t. $|\beta_i - \gamma_i| \leq K$ for $i \notin I_n$,
and $\sum_{i \in I_n} (4^{-\beta_i} + 4^{-\gamma_i}) \leq K$
(we identify the set of subsets of $\{1, \ldots, n\}$ with $\{0,1\}^n$).
The set $T$ is indeed a tree: if $I_n \in T$,
then $I_n \cap \{1 \ldots, m\} \in T$ for $m < n$.
By assumption, $T$ has arbitrarily long branches. By K\"onig's Lemma
\cite[p.~20]{Ke}, $T$ has an infinite branch, which yields a set
$I \subset \N$ s.t. $|\beta_i - \gamma_i| \leq K$ for $i \notin I$, and
$\sum_{i \in I} (4^{-\beta_i} + 4^{-\gamma_i}) \leq K$.
\end{proof}

Next consider a space $\Xi = \prod_{k \in \N} \Xi_k$, where
$\Xi_k = \{0, \ldots, k-1\}$, with the equivalence relation
$c \eks b$ iff $\sup_i |c_i - b_i| < \infty$ (here, $c = (c_i)_{i \in \N}$,
$b = (b_i)_{i \in \N}$). By \cite{Ro}, $\eks$ is a complete $\ks$ relation,
hence, by Lemma~\ref{k_sigma}, it reduces $(S_A, \ssim)$. It remains
to prove the converse.

\begin{proposition}\label{borel2}
There exists a Borel map
$\phi : \Xi \to S_A$ s.t. $\phi(b) \ssim \phi(c)$ iff $c E_{\ks} b$.
\end{proposition}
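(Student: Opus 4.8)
The plan is to encode each $b\in\Xi$ coordinatewise as a non-decreasing sequence lying just above $\alpha:=\nn(X(A))$, exploiting the one feature of $A\in\comp$ that has not yet been used here. Since $\alpha_k=\sup\{\ell\in\N:2^{1-\ell}\geq s^o_k\}$, we have $2^{-\alpha_k}<s^o_k\leq 2^{1-\alpha_k}$, hence $(s^o_k)^2\leq 4\cdot 4^{-\alpha_k}$; as $A$ is not Hilbert--Schmidt this gives $\sum_k 4^{-\alpha_k}=\infty$, while $s^o_k\to 0$ gives $\alpha_k\to\infty$. The divergence of $\sum_k 4^{-\alpha_k}$ is precisely what will let us make ``blocks'' so heavy that relegating an entire block to the exceptional set of $\ssim$ becomes prohibitively expensive.

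Concretely, I would set $t_k=k(k-1)/2$ (so $t_{k+1}-t_k=k$) and choose recursively integers $0=n_0<n_1<n_2<\cdots$ with
$$
\sum_{i=n_{k-1}+1}^{n_k}4^{-\alpha_i}\ \geq\ k\cdot 4^{\,t_k+k}\qquad(k\in\N),
$$
which is possible because $\sum_{i>n_{k-1}}4^{-\alpha_i}=\infty$. Writing $\kappa(i)$ for the unique $k$ with $n_{k-1}<i\leq n_k$, define $\phi$ by
$$
\phi(b)_i\ =\ \alpha_i+t_{\kappa(i)}+b_{\kappa(i)}\qquad(b\in\Xi,\ i\in\N).
$$
The next step is to verify $\phi(b)\in S_A$: each coordinate is a natural number $\geq\alpha_i$, and $\phi(b)$ is non-decreasing because $\alpha$ is non-decreasing, $\phi(b)_i-\alpha_i$ is constant on each block $\{n_{k-1}+1,\dots,n_k\}$, and between consecutive blocks $t_k+b_k\leq t_k+(k-1)\leq t_{k+1}\leq t_{k+1}+b_{k+1}$. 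Since $\phi(b)_i$ depends on $b$ only through $b_{\kappa(i)}$, the map $\phi$ is continuous, hence Borel.

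It then remains to check that $\phi$ is a reduction. If $\sup_k|b_k-c_k|=K_0<\infty$, then on every block $|\phi(b)_i-\phi(c)_i|=|b_{\kappa(i)}-c_{\kappa(i)}|\leq K_0$, so $\phi(b)\ssim\phi(c)$ with $I=\emptyset$ and $K=\max(K_0,1)$. Conversely, assume $\sup_k|b_k-c_k|=\infty$ and, for contradiction, that $\phi(b)\ssim\phi(c)$ via some $K\in\N$ and $I\subseteq\N$. Pick $k>K$ with $|b_k-c_k|>K$ (infinitely many such $k$ exist); then on block $k$ we have $|\phi(b)_i-\phi(c)_i|=|b_k-c_k|>K$, so $\{n_{k-1}+1,\dots,n_k\}\subseteq I$, whence
$$
\begin{aligned}
K\ &\geq\ \sum_{i\in I}\bigl(4^{-\phi(b)_i}+4^{-\phi(c)_i}\bigr)\ \geq\ \sum_{i=n_{k-1}+1}^{n_k}4^{-\phi(c)_i}\ =\ 4^{-(t_k+c_k)}\sum_{i=n_{k-1}+1}^{n_k}4^{-\alpha_i}\\
&\geq\ 4^{-(t_k+k-1)}\cdot k\cdot 4^{\,t_k+k}\ =\ 4k\ >\ K,
\end{aligned}
$$
using $c_k\leq k-1$ and the defining inequality of $n_k$; this contradiction gives $\phi(b)\not\ssim\phi(c)$. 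The main obstacle is the apparent tension between membership in $S_A$ (which forces $\phi(b)_i\geq\alpha_i$, so the terms $4^{-\phi(b)_i}$ are at most $4^{-\alpha_i}$ and could conceivably be summable, killing the argument above) and the requirement that a misaligned block carry weight tending to $\infty$ with $k$; it dissolves exactly because $\sum_i 4^{-\alpha_i}=\infty$, i.e.\ because $A$ is not Hilbert--Schmidt, so the blocks $[n_{k-1}+1,n_k]$ can be taken as heavy as we please. Combined with Lemma~\ref{k_sigma}, Corollary~\ref{reduction1}, and the fact that $\eks$ is complete $\ks$ \cite{Ro}, this proves Theorem~\ref{bireducible}.
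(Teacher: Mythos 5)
Your proof is correct and follows essentially the same strategy as the paper's: partition $\N$ into blocks so heavy with respect to $\sum_i 4^{-\alpha_i}$ (which diverges precisely because $A$ is not Hilbert--Schmidt) that any block on which $\phi(b)$ and $\phi(c)$ disagree by more than $K$ cannot be absorbed into the exceptional set $I$ of the relation $\ssim$. The only real difference is technical: to keep the image sequence non-decreasing you add the deterministic offsets $t_k=k(k-1)/2$ to each block, whereas the paper leaves the blocks unshifted ($b'_j=\alpha_j+b_k$ on $I_k$) and instead inserts interpolating gap regions $b'_j=\min\{\alpha_j+k,\alpha_{p_{k+1}}\}$ between consecutive blocks; your device is arguably cleaner, at the modest cost of requiring heavier blocks ($\geq k\cdot 4^{t_k+k}$ rather than $4^{2k}$).
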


\begin{proof}
As $\sum_i 4^{-\alpha_i} = \infty$, and $\lim_i \alpha_i = \infty$, there exists
a sequence of positive integers $1 = p_0 < q_1 < p_2 < q_2 < \ldots$
s.t. $\sum_{i \in I_k} 4^{-\alpha_i} > 4^{2k}$ ($I_k = [p_k, q_k - 1)$),
and $\alpha_{p_{k+1}} > k + q_k$. Define $\phi((b_i)) = (b^\prime_j)$
by setting $b_j^\prime = \alpha_j + b_k$ if $j \in I_k$, and
$b_j^\prime = \min\{\alpha_j + k , \alpha_{p_{k+1}}\}$
if $q_k \leq j < p_{k+1}$. Clearly, $\phi$ is a Borel map.
Moreover, if $c \eks b$, then $\phi(b) \ssim \phi(c)$. Suppose, on the
contrary, that $b^\prime \ssim c^\prime$, where $b^\prime = \phi(b)$ and
$c^\prime = \phi(c)$. Then there exists a set $I \subset \N$ and $K \in \N$
s.t. $|b_j^\prime - c_j^\prime| \leq K$ for $j \notin I$, and
$\sum_{j \in I} (4^{-b_j^\prime} + 4^{-c_j^\prime}) \leq K$.
We shall show that $|b_k - c_k| \leq K$ for all but finitely many $k$'s.
Indeed, otherwise there exist infinitely many $k$'s s.t. $I_k \subset I$
(this follows from the fact that
$b_j^\prime - c_j^\prime = b_k - c_k$ for $j \in I_k$).
But
$$
\sum_{j \in I_k} (4^{-b_j^\prime} + 4^{-c_j^\prime}) \geq
2 \cdot 4^{-k} \sum_{j \in I_k} 4^{-\alpha_j^\prime} > 1 ,
$$
hence 
$$
\sum_{j \in I} (4^{-b_j^\prime} + 4^{-c_j^\prime}) \geq
\sum_{I_k \subset I} \sum_{j \in I_k} (4^{-b_j^\prime} + 4^{-c_j^\prime}) =
\infty ,
$$
a contradiction.
\end{proof}

\begin{proof}[Conclusion of the proof of Theorem~\ref{bireducible}]
By Corollary~\ref{reduction1}, $(\bs(X(A)), \simeq)$ and $(S_A, \ssim)$ are
Borel bireducible to each other. By Lemmas \ref{k_sigma} and \ref{borel2},
$(S_A, \ssim)$ is Borel bireducible to a complete $\ks$ relation.
\end{proof}

\begin{remark}\label{banach_classify}
For many separable Banach spaces $X$, it is known that the isomorphism
relation on $\bs(X)$ reduces certain ``classical'' relations, such as $\eks$
(see e.g. \cite{An, FG, FLRo, FRo}).
\end{remark}

\section{Proofs of Theorems \ref{intro:Ksigma},
\ref{intro:unique_basis}, \ref{intro:subbasis}}\label{main_proofs}

Recall that the class $\comp$ consists of all compact contractions,
which are not Hilbert-Schmidt, and the family $\fami$ is the set of all
operator spaces $X(A)$, where $A \in B(\ell_2)$ belongs to $\comp$.
Clearly, all these spaces are isometric to $\ell_2$.

\begin{proof}[Proof of Theorem \ref{intro:Ksigma}]
Suppose $X(A) \in \fami$. By Theorem~\ref{bireducible} and
Corollary \ref{biembeddable}, the relations of complete
isomorphism and complete biembeddability on $\bs(X(A))$
are complete $\ks$. To show that $\fami$ contains a continuum
of spaces, not completely isomorphic to each other, pick
$A \in B(\ell_2) \cap \comp$. Consider a space
$\Xi = \prod_{k \in \N} \Xi_k$, where $\Xi_k = \{0, \ldots, k-1\}$,
with the equivalence relation $c \eks b$ iff $\sup_i |c_i - b_i| < \infty$
(here, $c = (c_i)_{i \in \N}$, $b = (b_i)_{i \in \N}$). By the results
of Section~\ref{classification}, there exists a Borel map
$\Phi : \Xi \to \bs(X(A))$, such that $\Phi(b) \simeq \Phi(c)$ iff
$b \eks c$. It remains to find a family
$(b_\vr)_{\vr \in \{0,1\}^\N} \subset \Xi$, such that
$b_\vr \eks b_\delta$ iff $\vr = \delta$. To this end, write
$\N$ as a disjoint union of infinite sets $I_k$ ($k \in \N$).
For any $\vr = (\vr_k)_{k=1}^\infty$, define
$$
b_\vr(i) = \left\{ \begin{array}{ll}
   0     &   i  \in I_k, \, \vr(k) = 0   \\
   i-1   &   i  \in I_k, \, \vr(k) = 1
\end{array} \right. .
$$
Clearly, this family $(b_\vr)$ has the desired properties.
\end{proof}

\begin{proof}[Proof of Theorem \ref{intro:unique_basis}]
Consider $A \in B(\ell_2)$ of class $\comp$.
The existence of the canonical basis has been established at the
beginning of Section~\ref{sequences}, while its uniqueness follows from
Proposition~\ref{unique_basis}.
\end{proof}

\begin{proof}[Proof of Theorem \ref{intro:subbasis}]
Combine Theorem~\ref{intro:unique_basis} with Theorem~\ref{subbasis}
and Corollary~\ref{cor_subbasis}.
\end{proof}

\section{Isometric classification: proof of Theorem~\ref{intro:banach}}\label{banach}




We handle the real case.
Begin by introducing a numerical invariant of
subspaces of $X = \R \oplus_1 \ell_2$. Denote by $P$ the ``natural''
projection onto $\R$. For $Y \in \subsp(X)$, define $c(Y) = \|P|_Y\|$.

\begin{lemma}\label{attains}
For $Y \in \subsp(X)$, there exists $x \in Y$ such that
$\|x\| = 1$, and $\|P x\| = c(Y)$. Moreover, if this $x$ is
written as $x = c(Y) \oplus (1 - c(Y))\xi_0$,
then $Y = \span[x, Y^\prime]$, where
$Y^\prime = \{0 \oplus \xi : \xi \in (Y \cap \ell_2) \cap \xi_0^\perp\}$.
\end{lemma}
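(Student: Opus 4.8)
The plan is to reduce the whole statement to a single nearest-point computation in the Hilbert space $\ell_2$. Write $Y_0=Y\cap(\{0\}\oplus\ell_2)$, which I regard as a closed subspace of $\ell_2$, and let $\pi\colon Y\to\R$ be the functional $\pi(a\oplus\xi)=a$, so that $\|\pi\|=c(Y)$ and $\ker\pi=Y_0$. If $c(Y)=0$ then $\pi=0$, i.e.\ $Y=Y_0\subseteq\{0\}\oplus\ell_2$; any unit vector $x=0\oplus\xi_0\in Y$ then satisfies $\|Px\|=0=c(Y)$, and since $\xi_0\in Y_0$ one has $Y_0=\R\xi_0\oplus(Y_0\cap\xi_0^\perp)$, whence $Y=\span[x,Y^\prime]$. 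So from now on I would assume $c(Y)>0$ and fix $y_*=a_*\oplus\xi_*\in Y$ with $a_*>0$ (possible since $\pi\ne0$); then $Y=\R y_*+Y_0$.

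First I would carry out the key computation. Every $y\in Y$ with $\pi(y)=a$ is uniquely of the form $y=\tfrac{a}{a_*}y_*+(0\oplus\eta)$ with $\eta\in Y_0$, so its $\ell_2$-coordinate is $\tfrac{a}{a_*}\xi_*+\eta$; by the projection theorem the $\ell_2$-norm of this coordinate is minimized over $\eta\in Y_0$, uniquely, at $\eta=-\tfrac{a}{a_*}Q\xi_*$, where $Q$ is the orthogonal projection of $\ell_2$ onto $Y_0$, the minimum value being $\tfrac{|a|}{a_*}d_*$ with $d_*:=\dist(\xi_*,Y_0)=\|\zeta\|$ and $\zeta:=\xi_*-Q\xi_*\perp Y_0$. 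Hence any $y\in Y$ satisfies $\|y\|\ge|\pi(y)|\,(1+d_*/a_*)$, with equality precisely for the vectors $w_a:=a\oplus\tfrac{a}{a_*}\zeta$. Taking suprema, $c(Y)=\|\pi\|=(1+d_*/a_*)^{-1}=\tfrac{a_*}{a_*+d_*}$, and this value is attained by the unit vector $x:=w_{c(Y)}=c(Y)\oplus\tfrac{c(Y)}{a_*}\zeta$, which establishes the first assertion.

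To get the structure, I would observe that the $\ell_2$-norm of the second coordinate of $x$ equals $\tfrac{c(Y)}{a_*}d_*=c(Y)\bigl(\tfrac1{c(Y)}-1\bigr)=1-c(Y)$ (using $c(Y)=\tfrac{a_*}{a_*+d_*}$), so indeed $x=c(Y)\oplus(1-c(Y))\xi_0$ with $\xi_0:=\zeta/d_*$, a unit vector; the degenerate case $d_*=0$ is exactly $c(Y)=1$, i.e.\ $1\oplus0\in Y$, where one takes $\xi_0=0$. Since $\zeta\perp Y_0$ we get $\xi_0\perp Y\cap\ell_2$, hence $(Y\cap\ell_2)\cap\xi_0^\perp=Y\cap\ell_2$ and $Y^\prime=\{0\oplus\xi:\xi\in Y_0\}$, which I identify with $Y_0$. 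Finally $x\notin Y_0$, because $\pi(x)=c(Y)\ne0$, while $Y_0=\ker\pi$ has codimension $1$ in $Y$; therefore $Y=\R x+Y_0=\span[x,Y^\prime]$, as required.

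The one genuinely delicate point is the attainment of $c(Y)$ — an infinite-dimensional supremum that need not be realized in general — and the argument above handles it through uniqueness of the nearest point of $Y_0$ to $\tfrac{1}{a_*}\xi_*$ inside the Hilbert space $\ell_2$ (equivalently one could invoke that $\R\oplus_1\ell_2$ is reflexive, being a finite $\ell_1$-sum of reflexive spaces, so $\pi$ attains its norm on the weakly compact unit ball of $Y$, and then establish $\xi_0\perp Y_0$ by a small perturbation $x\mapsto x-(0\oplus t\eta)$). Everything else is routine linear algebra together with the elementary Hilbert-space geometry of $\ell_2$.
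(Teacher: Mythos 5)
Your proof is correct, and it takes a genuinely different route from the one in the paper. The paper obtains the norm-attaining vector $x$ by a limiting argument: it takes a maximizing sequence $t_n\oplus(1-t_n)\xi_n\in Y$ with $t_n\to c(Y)$, shows via uniform convexity of $\ell_2$ that $(\xi_n)$ must be Cauchy (otherwise averaging two far-apart terms would produce a vector violating $\|P|_Y\|=c(Y)$), and passes to the limit; the orthogonality of $Y\cap\ell_2$ to $\xi_0$ is then established separately by a perturbation argument ($\|\xi_0+z\xi\|<\|\xi_0\|$ would again contradict the definition of $c(Y)$). You instead compute everything in closed form: writing $Y=\R y_*+Y_0$ with $Y_0=\ker(P|_Y)$ and applying the projection theorem in $\ell_2$, you get the exact value $c(Y)=a_*/(a_*+d_*)$, an explicit maximizer $x=w_{c(Y)}$, and the orthogonality $\xi_0\perp Y\cap\ell_2$ for free, since $\xi_0$ is a normalization of $\xi_*-Q\xi_*$. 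Your approach buys an explicit formula and unifies the existence and orthogonality claims into one computation, while the paper's compactness-plus-uniform-convexity argument is the kind that would generalize to settings where no orthogonal projection is available (e.g.\ $\R\oplus_1 E$ with $E$ uniformly convex but not Hilbertian, which is relevant to Remark~\ref{other_p}). Your handling of the degenerate cases $c(Y)\in\{0,1\}$ and of the codimension-one decomposition $Y=\R x+Y_0$ is also sound.
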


\begin{proof}
If $c(Y) = 0$, the statement is trivial. Suppose $c(Y) = 1$. Then, for every
$n \in \N$, there exists $t_n \in (1-1/n, 1]$ and $\xi_n \in \ell_2$ s.t.
$\|\xi_n\| = 1$, and $t_n \oplus (1-t_n) \xi_n \in X$. As $Y$ is closed,
$1 \oplus 0 \in Y$.

Next consider $c(Y) \in (0,1)$. Suppose, for the sake of contradiction,
that there is no $x$ as in the statement of the lemma. Then for every
$n \in \N$ there exist $t_n \in (c(Y) - 1/n, c(Y))$, and $\xi_n \in \ell_2$
s.t. $\|\xi_n\| = 1$, and $t_n \oplus (1-t_n) \xi_n \in Y$.
Passing to a subsequence if necessary, we can assume that $(\xi_n)$ is
a Cauchy sequence in $\ell_2$. Indeed, otherwise there exist
$n_1 < n_2 < \ldots$ and $\alpha > 0$, such that, for any $i$,
$\|\xi_{n_{i+1}} - \xi_{n_i}\| > \alpha$. By the uniform convexity of
Hilbert spaces (which follows, for instance, from the parallelogram identity),
 there exists $\beta > 0$ s.t.
$\|(1-t_{n_{i+1}}) \xi_{n_{i+1}} + (1-t_{n_i}) \xi_{n_i}\|/2 < 1 - c(Y) - \beta$
for any $i$. Define
$$
y_i = \frac{t_{n_{i+1}} + t_{n_i}}{2} \oplus
\frac{(1-t_{n_{i+1}}) \xi_{n_{i+1}} + (1-t_{n_i}) \xi_{n_i}}{2} \in Y .
$$
Then $\|y_i\| < 1 - \beta$, and $\lim_i \|P y_i\| = c(Y)$.
Therefore, $\|P|_Y\| > c(Y)$, which is impossible.

Thus, the sequence $(\xi_n)$ converges to some $\xi_0 \in \ell_2$. Then
$x = c(Y) \oplus (1-c(Y))\xi_0$ is the limit of the sequence
$t_n \oplus (1-t_n) \xi_n$, hence it belongs to $Y$.
Clearly, $\|P x\| = c(Y)$, and $Y = \span[x, Y \cap \ell_2]$.
Moreover, any $\xi \in Y \cap \ell_2$
is orthogonal to $\xi_0$. Indeed, otherwise there exists
$\xi \in Y \cap \ell_2$ and $z \in \C$ s.t. $\|\xi_0 + z \xi\| < \|\xi_0\|$.
Then $x^\prime = x + (0 \oplus \xi) = c(Y) \oplus (\xi_0 + z \xi)$
belongs to $Y$, $\|x^\prime\| < 1$, and $\|P x^\prime\| = c(Y)$,
which is impossible.
\end{proof}

For $t \in [0,1]$, define $\phi(t) = t + \sqrt{(1-t)^2 + 1}$.
Clearly, $\phi$ is continuous and increasing.

\begin{lemma}\label{weak}
For $Y \in \subsp(X)$,
$$
\phi(c(Y)) = \sup \big\{ \liminf_i \|x + y_i\| : x, y_i \in Y, \, \,
\|x\| = \|y_i\| = 1 , \, \, y_i \overset{w}{\to} 0 \big\} .
$$
Moreover, there exist a norm $1$ $x \in Y$, and a normalized
weakly null sequence $(y_i)$ in $Y$, such that
$\phi(c(Y)) = \|x + y_i\|$ for every $i$.
\end{lemma}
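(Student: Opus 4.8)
The plan is to establish the stated identity as two inequalities, writing $S$ for the supremum on the right-hand side; the ``Moreover'' clause will fall out of the argument for $\phi(c(Y))\leq S$, and the boundary values $c(Y)\in\{0,1\}$ (where the vector $\xi_0$ appearing below is not meaningful) will be handled by the same formulas.

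\textbf{Upper bound $S\leq\phi(c(Y))$.} I would fix arbitrary normalized $x,y_i\in Y$ with $y_i\overset{w}{\to}0$ and simply compute $\lim_i\|x+y_i\|$. Write $x=a\oplus\xi$ and $y_i=b_i\oplus\eta_i$, so that $|a|+\|\xi\|=|b_i|+\|\eta_i\|=1$. Weak nullity of $(y_i)$ in $X=\R\oplus_1\ell_2$ forces $b_i\to0$ (pair against the first-coordinate functional $s\oplus\zeta\mapsto s$, which has norm $1$ on $X$) and $\eta_i\overset{w}{\to}0$ in $\ell_2$ (pair against $s\oplus\zeta\mapsto\langle\zeta_0,\zeta\rangle$, $\zeta_0\in\ell_2$). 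Hence $|a+b_i|\to|a|$, while $\langle\xi,\eta_i\rangle\to0$ and $\|\eta_i\|=1-|b_i|\to1$ give $\|\xi+\eta_i\|_2\to\sqrt{\|\xi\|^2+1}$. Since $\|\xi\|=1-|a|$, it follows that $\|x+y_i\|\to|a|+\sqrt{(1-|a|)^2+1}=\phi(|a|)$. Finally $|a|=\|Px\|\leq c(Y)$ and $\phi$ is increasing, so $\liminf_i\|x+y_i\|=\phi(|a|)\leq\phi(c(Y))$; taking the supremum yields the bound.

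\textbf{Lower bound and the explicit witnesses.} Here I would use Lemma~\ref{attains} to produce $x\in Y$ with $\|x\|=1$ and $\|Px\|=c(Y)$, say $x=c(Y)\oplus(1-c(Y))\xi_0$ with $\|\xi_0\|=1$ and $(Y\cap\ell_2)\perp\xi_0$ when $c(Y)<1$. The space $Y_0=\{\xi\in\ell_2:0\oplus\xi\in Y\}$ equals $Y\cap\ker P$, hence has codimension at most $1$ in $Y$ and is therefore infinite dimensional; so I can choose an orthonormal sequence $(\eta_i)$ in $Y_0\cap\xi_0^\perp$ (the passage to $\xi_0^\perp$ is automatic when $c(Y)<1$ and costs at most one dimension otherwise) and set $y_i=0\oplus\eta_i\in Y$. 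Then $\|y_i\|=1$, $(y_i)$ is weakly null in $X$ (every $f\in X^*$ restricts to $\{0\}\oplus\ell_2$ as an element of $\ell_2^*$, and orthonormal sequences are weakly null there), and, using $\eta_i\perp\xi_0$ together with $\|\xi_0\|=\|\eta_i\|=1$, one gets $\|x+y_i\|=c(Y)+\|(1-c(Y))\xi_0+\eta_i\|_2=c(Y)+\sqrt{(1-c(Y))^2+1}=\phi(c(Y))$ for every $i$. This proves $\phi(c(Y))\leq S$ and at the same time exhibits the $x$ and $(y_i)$ demanded by the last sentence of the lemma; the cases $c(Y)=1$ (take $x=1\oplus0$) and $c(Y)=0$ (take $x=0\oplus\xi$, $\|\xi\|=1$) are covered by the same computation.

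I do not anticipate a serious obstacle: the entire argument is elementary Hilbert-space geometry. The points that need a little care are peeling off the scalar and $\ell_2$ components of weak convergence in the $\ell_1$-direct sum (so that $b_i\to0$ and $\eta_i\overset{w}{\to}0$), checking that $Y\cap\ell_2$ is infinite dimensional so that orthonormal sequences are available, and the two degenerate values of $c(Y)$. It is precisely the orthogonality $\eta_i\perp\xi_0$ supplied by Lemma~\ref{attains} that upgrades the limiting relation obtained in the upper-bound step to the exact equality $\|x+y_i\|=\phi(c(Y))$ asserted in the ``Moreover'' part.
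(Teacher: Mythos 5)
Your proposal is correct and follows essentially the same route as the paper: the upper bound comes from splitting weak convergence in $\R\oplus_1\ell_2$ into the scalar coordinate tending to $0$ and the $\ell_2$ coordinate tending weakly to $0$, giving $\lim_i\|x+y_i\|=\phi(\|Px\|)\leq\phi(c(Y))$, and the witnesses for the lower bound are obtained exactly as in the paper from Lemma~\ref{attains} together with an orthonormal sequence in $Y\cap\ell_2$ orthogonal to $\xi_0$. Your write-up is in fact slightly more careful than the paper's about the degenerate values $c(Y)\in\{0,1\}$ and about why $Y\cap\ell_2$ is infinite dimensional.
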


\begin{proof}
Assume $c(Y) \in (0,1)$ (only minimal changes are needed to
handle $c(Y) \in \{0,1\}$).
Write $x = t \oplus (1-t) \xi$ and $y_i = t_i \oplus (1-t_i) \xi_i$.
Here, $t, t_i \in [0,1]$, $\xi_i \in \ell_2$, and $\|\xi_i\| = 1$.
As $y_i \to 0$ weakly, $t_i \to 0$, and $\langle \xi, \xi_i \rangle \to 0$.
Therefore, $\lim_i \|x + y_i\| = \phi(t_i)$. Taking the supremum over all
$x \in Y$, we prove the desired equality. Furthermore, by Lemma~\ref{attains},
$Y = \span[x, Y \cap \ell_2]$, where $x = c(Y) \oplus (1-c(Y)) \xi_0$,
$\xi_0 \in \ell_2$ has norm $1$, and $Y \cap \ell_2$ is
orthogonal to $\xi_0$. Let $(\xi_i)$ be an orthonormal basis in
$Y \cap \ell_2$. Then $\phi(c(Y)) = \|x + y_i\|$ for every $i$, and
$y_i \overset{w}{\to} 0$.
\end{proof}

\begin{lemma}\label{almost_embed}
If $Y$ and $Z$ are infinite dimensional subspaces of $X$, and $Y$ is
almost isometrically embeddable into $Z$, then $c(Y) \leq c(Z)$.
\end{lemma}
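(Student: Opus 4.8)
The plan is to use the characterization of $\phi(c(Y))$ from Lemma~\ref{weak} as a quantity that is determined by the (almost) isometric embeddability structure of $Y$. The key point is that $\phi(c(Y))$ is, up to the continuous increasing function $\phi$, an invariant that can only go up when we pass to a ``larger'' space, because it is an expression involving a supremum over configurations (a norm-one vector plus a normalized weakly null sequence) that are preserved, approximately, under almost isometric embeddings. Since $\phi$ is increasing, controlling $\phi(c(Y))$ controls $c(Y)$.

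Here is how I would carry it out. Fix $\vr > 0$, and for each $\delta > 0$ let $u_\delta : Y \to Z$ be a linear embedding with $d(Y, u_\delta(Y)) < 1 + \delta$; normalizing, we may assume $\|y\| \leq \|u_\delta y\| \leq (1+\delta)\|y\|$ for all $y \in Y$. By the second part of Lemma~\ref{weak}, pick a norm-one $x \in Y$ and a normalized weakly null sequence $(y_i)$ in $Y$ with $\|x + y_i\| = \phi(c(Y))$ for every $i$. Now consider the images $u_\delta x$ and $u_\delta y_i$ in $Z$. Since $u_\delta$ is bounded, $(u_\delta y_i)$ is bounded; passing to a subsequence, it converges weakly in $Z \subset X = \R \oplus_1 \ell_2$ to some $w$. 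The quotient map structure of $\ell_1$-type sums gives that weak convergence here is coordinatewise on the $\R$-summand plus weak convergence in $\ell_2$; I would argue that after subtracting off $w$ and renormalizing, one still has a normalized weakly null sequence in $Z$ (the norms $\|u_\delta y_i\| \in [1, 1+\delta]$, so the renormalization costs at most a factor $1+\delta$, and the weak limit $w$ must have small norm because the $y_i$ are themselves weakly null and $u_\delta$ distorts norms by at most $1+\delta$ — more precisely $\|w\| \leq \liminf \|u_\delta y_i - u_\delta y_j\| \cdot$ something, which I would make precise). Applying the supremum characterization of Lemma~\ref{weak} to $Z$ with the configuration $(u_\delta x / \|u_\delta x\|, (u_\delta y_i - w)/\|u_\delta y_i - w\|)$ yields
\[
\phi(c(Z)) \geq \liminf_i \Bignorm{\frac{u_\delta x}{\|u_\delta x\|} + \frac{u_\delta y_i - w}{\|u_\delta y_i - w\|}} \geq \frac{1}{1+\delta}\,\liminf_i \|u_\delta x + u_\delta y_i\| - \eta(\delta) \geq \frac{\phi(c(Y))}{(1+\delta)^2} - \eta(\delta),
\]
where $\eta(\delta) \to 0$ as $\delta \to 0$ accounts for the weak limit $w$ and the renormalization. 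Letting $\delta \to 0$ gives $\phi(c(Z)) \geq \phi(c(Y))$, and since $\phi$ is increasing, $c(Y) \leq c(Z)$.

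The main obstacle I anticipate is the bookkeeping around the weak limit $w$ of the sequence $(u_\delta y_i)$: one must show that passing to $u_\delta y_i - w$ and renormalizing degrades the test quantity $\|u_\delta x + u_\delta y_i\|$ by only a factor tending to $1$, and in particular that $\|w\|$ is small (not merely bounded) when $\delta$ is small. This hinges on the geometry of $\R \oplus_1 \ell_2$: a weakly null sequence in $Y$ maps under a near-isometry to an ``almost weakly null'' sequence, and the only way a weak limit $w$ can have appreciable norm is if the embedding were badly distorting, which $1+\delta$ forbids. I would isolate this as a short sublemma (essentially: if $\|y\| \leq \|uy\| \leq (1+\delta)\|y\|$ and $y_i \overset{w}{\to} 0$, then the weak limit of $(uy_i)$ has norm $O(\sqrt{\delta})$ or so), using that the $\ell_1$-direct-sum norm together with the computed formula $\|x+y_i\| = \phi(t_i)$ from the proof of Lemma~\ref{weak} pins down how the $\R$-mass and the $\ell_2$-mass of these vectors behave. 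Everything else is a routine diagonal/subsequence argument and an application of the already-proven Lemma~\ref{weak} in both directions.
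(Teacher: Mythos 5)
Your proof is correct and follows essentially the same route as the paper: transport the extremal configuration from Lemma~\ref{weak} through the near-isometry, bound the renormalization errors by the triangle inequality, apply Lemma~\ref{weak} again in the target, and let the distortion tend to $1$ before invoking the monotonicity of $\phi$. The only remark worth making is that your anticipated main obstacle is a phantom: a bounded linear operator is weak-to-weak continuous, so $(u_\delta y_i)$ is already weakly null in $Z$ and $w=0$ exactly (and dividing by the norms $\|u_\delta y_i\|\in[1,1+\delta]$, being bounded scalars, preserves weak nullity), so no sublemma about $\|w\|$ is needed and the bookkeeping reduces to the two error terms of size at most $\delta$ coming from normalizing $u_\delta x$ and $u_\delta y_i$.
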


\begin{proof}
By definition, for every $\lambda \in (1, 1.1)$, there exist a subspace
$W \hra Z$ and a contraction $T : Y \to W$ with $\|T^{-1}\| < \lambda$.
It suffices to show that
\begin{equation}
\phi(c(W)) \geq \lambda^{-1} \phi(c(Y)) - 2 (\lambda - 1) .
\label{compare_phi}
\end{equation}
Indeed, then we would conclude
$$
\phi(c(Z)) \geq \phi(c(W)) \geq \lambda^{-1} \phi(c(Y)) - 2 (\lambda - 1) .
$$
As the above inequality holds for any $\lambda > 1$, we
conclude that $\phi(c(Y)) \leq \phi(c(Z))$.
By the monotonicity of $\phi$, $c(Y) \leq c(Z)$.

By Lemma~\ref{weak}, there exists a normalized weakly null sequence
$(y_i)$ in $Y$, and a norm one $x \in Y$, such that
$\phi(c(Y)) = \|x + y_i\|$. In the space $W$, consider the elements
$x^\prime = Tx/\|Tx\|$, and $y_i^\prime = T y_i/\|T y_i\|$.
Then the sequence $(y_i^\prime)$ is weakly null, and
\begin{equation}
\|x^\prime + y_i^\prime\| \geq \|T(x + y_i)\| -
\|(1-\|Tx\|^{-1}) Tx\| - \|(1-\|Ty_i\|^{-1}) Ty_i\| .
\label{triangle}
\end{equation}
But $\|T^{-1}\| \|T(x + y_i)\| \geq \|x+y_i\|$, hence 
$\|T(x + y_i)\| > \lambda^{-1} \phi(c(Y))$. Furthermore,
$1 \geq \|Tx\| > \lambda^{-1}$, hence
$\|(1-\|Tx\|^{-1}) Tx\| < \lambda - 1$.
Similarly, $\|(1-\|Ty_i\|^{-1}) Ty_i\| < \lambda - 1$.
By \eqref{triangle}, $\|x^\prime + y_i^\prime\| >
\lambda^{-1} \phi(c(Y)) - 2 (\lambda - 1)$.
Applying Lemma~\ref{weak}, we obtain \eqref{compare_phi}.
\end{proof}

\begin{lemma}\label{c(Y)=c(Z)}
If $Y$ and $Z$ are infinite dimensional subspaces of $X$, and
$c(Y) = c(Z)$, then $Y$ is isometric to $Z$.
\end{lemma}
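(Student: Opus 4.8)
The plan is to show that the numerical invariant $c(Y)$ completely determines the isometry type of $Y$, by writing down an explicit model. For each $c \in [0,1]$ let $E_c$ denote the vector space $\R \oplus \ell_2$ equipped with the norm
\[
\|(\lambda,\eta)\|_c = |\lambda|\,c + \sqrt{\lambda^2(1-c)^2 + \|\eta\|_2^2}.
\]
I would prove that every infinite dimensional $Y \in \subsp(X)$ is isometric to $E_{c(Y)}$; the lemma is then immediate, since $c(Y) = c(Z)$ forces both spaces to be isometric to the common space $E_{c(Y)}$.

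First I would dispose of the two degenerate values of $c(Y)$. If $c(Y) = 0$, then $P$ vanishes on $Y$, so $Y$ is an infinite dimensional closed subspace of $\{0\} \oplus \ell_2$, hence a separable infinite dimensional Hilbert space, hence isometric to $\ell_2 = E_0$. If $c(Y) = 1$, then by Lemma~\ref{attains} the vector $1 \oplus 0$ lies in $Y$; subtracting a scalar multiple of it from an arbitrary $t \oplus \zeta \in Y$ shows $0 \oplus \zeta \in Y$, so $Y = \R(1 \oplus 0) \oplus (\{0\} \oplus V)$ with $V = \{\zeta \in \ell_2 : 0 \oplus \zeta \in Y\}$ a closed subspace of $\ell_2$ of codimension at most one in $Y$, hence infinite dimensional and isometric to $\ell_2$. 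Since $\|t \oplus \zeta\|_X = |t| + \|\zeta\|_2$ on this subspace, $Y$ is isometric to $E_1$.

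For $c := c(Y) \in (0,1)$ I would invoke Lemma~\ref{attains} to obtain a unit vector $x = c \oplus (1-c)\xi_0 \in Y$ with $\|\xi_0\|_2 = 1$, together with the decomposition $Y = \span[x, Y']$, where $Y' = \{0 \oplus \xi : \xi \in W\}$ and $W = (Y \cap \ell_2) \cap \xi_0^\perp$. Since $Y \cap \ell_2$ has codimension at most one in $Y$ and $W$ has codimension at most one in $Y \cap \ell_2$, the subspace $W$ is an infinite dimensional closed subspace of $\ell_2$, so it admits a surjective linear isometry $U : \ell_2 \to W$. A typical element of $Y$ is $\lambda x + (0 \oplus \xi)$ with $\lambda \in \R$ and $\xi \in W$, and since $\xi \perp \xi_0$,
\[
\|\lambda x + (0 \oplus \xi)\|_X = |\lambda|\,c + \|\lambda(1-c)\xi_0 + \xi\|_2 = |\lambda|\,c + \sqrt{\lambda^2(1-c)^2 + \|\xi\|_2^2}.
\]
Consequently the map $\Phi : E_c \to Y$, $\Phi(\lambda,\eta) = \lambda x + (0 \oplus U\eta)$, is a linear bijection with $\|\Phi(\lambda,\eta)\|_X = \|(\lambda,\eta)\|_c$, i.e.\ an isometry, which establishes the claim and hence the lemma.

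There is essentially no obstacle beyond Lemma~\ref{attains}, which is already proved; the two points that require care are (i) confirming that the Hilbert-space pieces $Y \cap \ell_2$ and $W$ are genuinely infinite dimensional, so that the model is exactly $E_c$ rather than $\R \oplus \ell_2^n$, and (ii) using the orthogonality $\xi \perp \xi_0$ to collapse the $\ell_2$-norm into the clean closed form above. These are precisely what make $\Phi$ a global isometry rather than merely a bounded isomorphism.
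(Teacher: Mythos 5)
Your proof is correct and is essentially the paper's own argument: both rest on Lemma~\ref{attains}, the decomposition $Y=\span[x,Y^\prime]$ with $Y^\prime$ an infinite dimensional Hilbert space orthogonal to $\xi_0$, the explicit closed-form norm, and a surjective Hilbert-space isometry between the orthogonal parts. The only (cosmetic) difference is that you factor through an explicit model space $E_c$ instead of building the isometry $Y\to Z$ directly, which also makes the homogeneity in $\lambda$ and the treatment of $c(Y)\in\{0,1\}$ slightly more explicit than in the paper.
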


\begin{proof}
We consider the case $c(Y) = c(Z) \in (0,1)$ (the extreme
cases of $c(Y) = c(Z) \in \{0,1\}$ are handled similarly).
By Lemma~\ref{attains}, $Y$ contains a norm one
$y = c(Y) \oplus \xi_Y \in Y$ (note that $\|\xi_Y\| = 1 - c(Y)$),
s.t. $Y = \span[y, Y \cap \ell_2]$, and $Y^\prime = Y \cap \ell_2$
is orthogonal to $\xi_Y$. Thus, for any $\xi \in Y^\prime$,
$\|y + \xi\| = c(Y) + \sqrt{(1-c(Y))^2 + \|\xi\|^2}$.
Similarly, $Z = \span[z, Z^\prime]$, and
$\|z + \eta\| = c(Z) + \sqrt{(1-c(Z))^2 + \|\eta\|^2}$
for any $\eta \in Z^\prime$. As $Y^\prime$ and $Z^\prime$
are both infinite dimensional separable Hilbert spaces,
there exists an isometry $T^\prime$ from $Y^\prime$ onto $Z^\prime$.
We complete the proof by defining the isometry $T$ from $Y$ onto $Z$
by setting $T y = z$, and $T|_{Y^\prime} = T^\prime$.
\end{proof}

\begin{proof}[Proof of Theorem~\ref{intro:banach}]
By Lemmas \ref{almost_embed} and \ref{c(Y)=c(Z)}, the following statements
are equivalent for $Y, Z \in \bs(X)$: (i) $Y$ and $Z$ are isometric,
(ii) $ d(Y,Z) = 1$, (iii) $Y$ and $Z$ are isometrically bi-embeddable,
(iv) $Y$ and $Z$ are almost isometrically bi-embeddable, (v) $c(Y) = c(Z)$.
Denote that canonical basis for $\ell_2$ by $e_0, e_1, \ldots$, and
consider a map 
$$
\Phi : [0,1] \to \subsp(X) : t \to
\span[t \oplus (1-t)e_0, 0 \oplus e_1, 0 \oplus e_2, \ldots] .
$$
Then $c(\Phi(t)) = t$, hence $\Phi(t_1)$ and $\Phi(t_2)$ satisfy any
(equivalently, all) of the relations (i) -- (iv) iff $t_1 = t_2$.
It remains to prove that the maps $\Phi$ and $c$ are Borel.

To handle $\Phi$, consider an open ball $U \subset X$ with the
center at $\alpha \oplus \sum_{i=0}^N \beta_i e_i$ and radius $r$. Then
$\Phi(t) \cap U \neq \emptyset$ iff there exist
$\lambda_0, \ldots, \lambda_N \in \Q$ s.t.
$$
|\alpha - t \lambda_0| + 
\Big( |\beta_0 - (1-t) \lambda_0|^2 +
\sum_{i=1}^N |\beta_i - \lambda_i|^2 \Big)^{1/2} < r .
$$
This inequality describes a Borel subset of $[0,1]$.
As any open subset of $X$ is a countable union of open balls,
the map $\Phi$ is Borel.

To deal with $c$, consider the sets
$$
U_t = \big\{s \oplus \xi \in \R \oplus_1 \ell_2 :
|s| > t , \, \|\xi\| < \sqrt{1 - t^2} \big\}
$$
($t \in [0,1]$). Clearly, $U_t$ is an open subset of $X$, and
$c(Y) > t$ iff $Y \cap U_t \neq \emptyset$.
\end{proof}

\begin{remark}\label{other_p}
Theorem~\ref{intro:banach} holds not only for $\R \oplus_1 \ell_2$, but
also for $\R \oplus_p \ell_2$, for $1 \leq p < 2$.
\end{remark}



\end{document}